\DeclareMathOperator\ch{ch}
\DeclareMathOperator\cch{cch}
\DeclareMathOperator\ex{ex}
\newtheorem{proposition}{Proposition}[section]
\newtheorem{theorem}{Theorem}[section]
\newtheorem{corollary}{Corollary}[section]
\newtheorem{definition}{Definition}[section]
\newtheorem{lemma}{Lemma}[section]
\newtheorem{claim}{Claim}[section]
\newtheorem{question}{Question}[section]
\begin{document}

\title{Topology and convexity in the space of actions modulo weak equivalence}

\author{Peter Burton\footnote{Research partially supported by NSF grant DMS-0968710}}

\date{\today}

\maketitle

\begin{abstract} We analyse the structure of the quotient $\mathrm{A}_\sim(\Gamma,X,\mu)$ of the space of measure-preserving actions of a countable discrete group by the relation of weak equivalence. This space carries a natural operation of convex combination. We introduce a variant of an abstract construction of Fritz which encapsulates the convex combination operation on $\mathrm{A}_\sim(\Gamma,X,\mu)$. This formalism allows us to define the geometric notion of an extreme point. We also discuss a topology on $\mathrm{A}_\sim(\Gamma,X,\mu)$ due to Abert and Elek in which it is Polish and compact, and show that this topology is equivalent others defined in the literature. We show that the convex structure of $\mathrm{A}_\sim(\Gamma,X,\mu)$ is compatible with the topology, and as a consequence deduce that $\mathrm{A}_\sim(\Gamma,X,\mu)$ is path connected. Using ideas of Tucker-Drob we are able to give a complete description of the topological and convex structure of $\mathrm{A}_\sim(\Gamma,X,\mu)$ for amenable $\Gamma$ by identifying it with the simplex of invariant random subgroups. In particular we conclude that $\mathrm{A}_\sim(\Gamma,X,\mu)$ can be represented as a compact convex subset of a Banach space if and only if $\Gamma$ is amenable. In the case of general $\Gamma$ we prove a Krein-Milman type theorem asserting that finite convex combinations of the extreme points of $\mathrm{A}_\sim(\Gamma,X,\mu)$ are dense in this space. We also consider the space $\mathrm{A}_{\sim_s}(\Gamma,X,\mu)$ of stable weak equivalence classes and show that it can always be represented as a compact convex subset of a Banach space. In the case of a free group $\mathbb{F}_N$, we show that if one restricts to the compact convex set $\mathrm{FR}_{\sim_s}(\mathbb{F}_N,X,\mu) \subseteq \mathrm{A}_{\sim_s}(\mathbb{F}_N,X,\mu)$ consisting of the stable weak equivalence classes of free actions, then the extreme points are dense in $\mathrm{FR}_{\sim_s}(\mathbb{F}_N,X,\mu)$.  \end{abstract}

\section{Introduction.}

By a probability space we mean a standard Borel space $Y$ with a Borel probability measure $\nu$. If $\nu$ is nonatomic, we say the pair $(Y,\nu)$ is a standard probability space. If $\nu$ is nonatomic then $Y$ must be uncountable and thus by Theorem 17.41 in \cite{K95} every standard probability space is isomorphic to the unit interval with Lebesgue measure. Let $\Gamma$ be a countable discrete group. By a measure-preserving action of $\Gamma$ on $(Y,\nu)$ we mean a Borel action $a: \Gamma \times Y \to Y$ which preserves the measure $\nu$. We write $\Gamma \curvearrowright^a (Y,\nu)$. In accordance with the standard conventions of ergodic theory, we identify two actions which agree almost everywhere. Thus a measure-preserving action of $\Gamma$ on $(Y,\nu)$ is equivalently a homomorphism from $\Gamma$ into the group $\mathrm{Aut}(Y,\nu)$ of measure-preserving automorphisms of $(Y,\nu)$, where again two such automorphisms are identified if they agree almost everywhere.\\
\\
We fix a standard probability space $(X,\mu)$ throughout the remainder of the paper. As in \cite{K} we can define the Polish space $\mathrm{A}(\Gamma,X,\mu)$ of measure-preserving actions of $\Gamma$. Kechris defines the following relation of weak containment among measure-preserving actions of $\Gamma$, by analogy with the standard notion of weak containment for representations.

\begin{definition} \cite{K} If $\Gamma \curvearrowright^a (X,\mu)$ and $\Gamma \curvearrowright^b (Y,\nu)$ are measure-preserving actions of $\Gamma$ on probability spaces, we say $a$ is \textbf{\textbf{weakly contained}} in $b$ and write $a \prec b$ if for any finite sequence $A_1,\ldots,A_n$ of measurable subsets of $X$, finite $F \subseteq \Gamma$ and $\epsilon > 0$ there exist measurable subsets $B_1,\ldots,B_n$ of $Y$ such that for all $\gamma \in F$ and all $i,j \leq n$ we have \[ | \mu( \gamma^a A_i \cap A_j) - \nu( \gamma^b B_i \cap B_j)| < \epsilon. \] We say $a$ is \textbf{\textbf{weakly equivalent}} to $b$ and write $a \sim b$ if $a \prec b$ and $b \prec a$. \end{definition}

We may assume in this definition that $A_1,\ldots,A_n$ form a partition of $X$. Note that we do not require $(X,\mu)$ and $(Y,\nu)$ to be standard, that is to say we include the case where they might be countable. The relation of weak containment is $G_\delta$, so the quotient $\mathrm{A}_\sim(\Gamma,X,\mu)$ of $\mathrm{A}(\Gamma,X,\mu)$ by weak equivalence is well-behaved. \\
\\
We also consider a generalization of weak containment, due to Tucker-Drob. For probability spaces $(Y_i,\nu_i), 1 \leq i \leq m$ and positive real numbers $\alpha_i, 1 \leq i \leq m$ with $\sum_{i=1}^m \alpha_i = 1$ we let $\bigsqcup_{i=1}^m \alpha_i Y_i$ be the probability space formed by endowing the disjoint union of the $Y_i$ with the measure $\sum_{i=1}^m \alpha_i \nu_i$ given by $\left( \sum_{i=1}^m \alpha_i \nu_i \right)(A) = \sum_{i=1}^m \alpha_i \nu_i(A \cap Y_i)$. If $\Gamma \curvearrowright^{a_i} (Y_i,\nu_i)$ are measure-preserving actions, then $\sum_{i=1}^m \alpha_i a_i$ is the action on $\bigsqcup_{i=1}^m \alpha_i Y_i$ given by letting $\Gamma$ act like $a_i$ on $Y_i$.

\begin{definition} \cite{RTD} If $\Gamma \curvearrowright^a (X,\mu)$ and $\Gamma \curvearrowright^b (Y,\nu)$ are measure-preserving actions, we say $a$ is \textbf{\textbf{stably weakly contained}} in $b$ if for all $A_1,\ldots,A_k \in \mathrm{MALG}_\mu$, all finite $F \subseteq \Gamma$ and all $\epsilon > 0$ there exist $\alpha_1, \ldots,\alpha_m$ such that $\sum_{i=1}^m \alpha_1 = 1$ and sets $B_1,\ldots,B_k \subseteq \bigsqcup_{i=1}^m \alpha_i Y_i$ such that \[ \left \vert \mu(\gamma^a A_i \cap A_j) - \sum_{i=1}^m \alpha_i \nu \left( \gamma^{\sum_{i=1}^m \alpha_i b} B_i \cap B_j \right) \right \vert < \epsilon.\] We write $a \prec_s b$ if $a$ is stably weakly contained in $b$ and $a \sim_s b$ for $a \prec_s b$ and $b \prec_s a$. \end{definition}

When we wish to distinguish between and action and its equivalence class, we write $[a]$ for the weak equivalence class of $a$ and $[a]_s$ for the stable weak equivalence class. The quotient of $\mathrm{A}(\Gamma,X,\mu)$ by the relation of stable weak containment is denoted $\mathrm{A}_{\sim_s}(\Gamma,X,\mu)$. The goal of this paper is to analyze the topological and geometric structure of $\mathrm{A}_\sim(\Gamma,X,\mu)$ and $\mathrm{A}_{\sim_s}(\Gamma,X,\mu)$ .\\
\\
More specifically, unlike $\mathrm{A}(\Gamma,X,\mu)$, the spaces $\mathrm{A}_\sim(\Gamma,X,\mu)$ and $\mathrm{A}_{\sim_s}(\Gamma,X,\mu)$ carry a well-defined operation of convex combination. This is inherited from the operation of endowing the disjoint union of two probability spaces with a convex combination of their respective measures. In Section 2 we introduce a variation of a construction of Fritz \cite{Fr} which abstracts the idea of convex combinations. Fritz's objects are referred to as `convex spaces'; we weaken the definition in order to encompass the convex structure on $\mathrm{A}_\sim(\Gamma,X,\mu)$ obtaining the notion of `weak convex space'. We show that this relates naturally to other ideas of convexity, define a notion of convex function and generalize the important geometric notions of `convex hull', `extreme point' and `face' from the classical situation of vector spaces to this abstract framework. We also define `topological weak convex spaces' as weak convex structures which are appropriately compatible with an underlying topology.\\
\\
In Section 3 we consider methods of topologizing $\mathrm{A}_\sim(\Gamma,X,\mu)$. The first topology defined on this space was in \cite{AbEl}, and a second formulation was given in \cite{RTD}. These are equivalent, Polish, compact and finer than the quotient of the weak topology on $\mathrm{A}(\Gamma,X,\mu)$. We discuss a third topology, implicit in \cite{AbEl} and pointed out to us by Kechris. This is shown to be equivalent to the previous two. We also consider a natural topology on $\mathrm{A}_{\sim_s}(\Gamma,X,\mu)$.\\
\\
In Section 4 we describe how to endow $\mathrm{A}_\sim(\Gamma,X,\mu)$ with the structure of a weak convex space and show that it is in fact a topological weak convex space. Furthermore, we show that the metric giving $\mathrm{A}_\sim(\Gamma,X,\mu)$ its Polish topology is compatible with the convex structure in the sense that the distance function to any compact convex set is a convex function. \\
\\
In Section 5 we analyze the structure of $\mathrm{A}_\sim(\Gamma,X,\mu)$ for amenable $\Gamma$. The main tool is the following idea. Let $\mathrm{Sub}(\Gamma)$ be the space of subgroups of $\Gamma$, regarded as a subspace of $\{0,1\}^\Gamma$ with the product topology. $\mathrm{Sub}(\Gamma)$ is then a compact metric space on which $\Gamma$ acts by conjugation.

\begin{definition} An \textbf{\textbf{invariant random subgroup}} of $\Gamma$ is a conjugation-invariant Borel probability measure on $\mathrm{Sub}(\Gamma)$. \end{definition}

Invariant random subgroups have been studied in numerous recent papers, including \cite{AGV}, \cite{Bow}, \cite{BGK} and \cite{EisGlas}. If $\Gamma \curvearrowright^a (X,\mu)$ is a measure-preserving action, then the pushforward measure $(\mathrm{stab}_a)_* \mu$ is an invariant random subgroup of $\Gamma$ called the type of $a$. We extend ideas of Tucker-Drob from \cite{RTD} to show the following.

\begin{theorem}\label{thm1} If $\Gamma$ is amenable, then $\mathrm{A}_\sim(\Gamma,X,\mu)$ is isomorphic to $\mathrm{IRS}(\Gamma)$ as a topological convex space. In particular, if $\Gamma$ is amenable then $\mathrm{A}_\sim(\Gamma,X,\mu)$ is isomorphic to a compact convex subset of a Banach space. \end{theorem}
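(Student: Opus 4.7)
The plan is to construct the \emph{type map} $\Phi : \mathrm{A}_\sim(\Gamma,X,\mu) \to \mathrm{IRS}(\Gamma)$ by $\Phi([a]) = (\mathrm{stab}_a)_*\mu$ and to verify that this is an isomorphism of topological weak convex spaces. That $\Phi$ is well-defined on equivalence classes follows from observing that $(\mathrm{stab}_a)_*\mu$ is determined by its values on the basic clopen sets $\{H \in \mathrm{Sub}(\Gamma) : F \subseteq H\}$ for finite $F \subseteq \Gamma$; these values equal the joint fixed-point measures $\mu\bigl(\bigcap_{\gamma \in F} \mathrm{Fix}(\gamma^a)\bigr)$, which can in turn be recovered as limits of the numerical data $\mu(\gamma^a A_i \cap A_j)$ appearing in the definition of weak containment, so the type depends only on $[a]$.

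The crucial input for bijectivity is the theorem of Tucker-Drob from \cite{RTD} asserting that for amenable $\Gamma$ two measure-preserving actions are weakly equivalent if and only if they have the same type, which yields injectivity of $\Phi$; surjectivity follows from the standard fact recorded in \cite{AGV} that every IRS arises as the type of some measure-preserving action. Preservation of the convex structure is then immediate from the definition of the disjoint union action: on $\alpha_1 Y_1 \sqcup \alpha_2 Y_2$ the stabilizer of a point in $Y_i$ under $\alpha_1 a_1 + \alpha_2 a_2$ is exactly its stabilizer under $a_i$, so the type of the combined action is $\alpha_1 \Phi([a_1]) + \alpha_2 \Phi([a_2])$.

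For continuity, I would invoke the characterization of the Abert-Elek topology from Section 3: convergence $[a_n] \to [a]$ implies $\mu\bigl(\bigcap_{\gamma \in F} \mathrm{Fix}(\gamma^{a_n})\bigr) \to \mu\bigl(\bigcap_{\gamma \in F} \mathrm{Fix}(\gamma^a)\bigr)$ for every finite $F \subseteq \Gamma$, which is precisely weak$^*$ convergence of the associated IRSs since the corresponding indicator functions linearly span a dense subspace of $C(\mathrm{Sub}(\Gamma))$. Since $\mathrm{A}_\sim(\Gamma,X,\mu)$ is compact by Section 3 and $\mathrm{IRS}(\Gamma)$ is Hausdorff, the continuous bijection $\Phi$ is automatically a homeomorphism, and the ``in particular'' conclusion is then immediate because $\mathrm{IRS}(\Gamma)$ embeds as a compact convex subset of the Banach space of signed Radon measures on $\mathrm{Sub}(\Gamma)$ under the total variation norm. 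The principal obstacle is the Tucker-Drob injectivity theorem, which is the deep part of the argument and relies on amenability through F\o lner-set approximation; all other steps are essentially bookkeeping built on the framework of Sections 3 and 4.
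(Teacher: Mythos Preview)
Your overall strategy matches the paper's: define the type map, verify it is a bijection preserving convex combinations, and use compactness to upgrade a continuous bijection to a homeomorphism. The convex-combination computation and the final ``in particular'' clause are handled exactly as you describe.

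The gap is in the injectivity step. You attribute to Tucker-Drob the statement that for amenable $\Gamma$ two actions are weakly equivalent if and only if they have the same type, but this is not quite what \cite{RTD} proves. Theorem~1.8(2) of \cite{RTD} gives the implication $\mathrm{type}(a)=\mathrm{type}(b)\Rightarrow a\sim b$ only when the common type is concentrated on \emph{infinite-index} subgroups; separately, \cite{RTD} establishes the homeomorphism $\mathrm{A}_{\sim_s}(\Gamma,X,\mu)\cong\mathrm{IRS}(\Gamma)$ for \emph{stable} weak equivalence. Neither statement yields injectivity of $\Phi$ on $\mathrm{A}_\sim$ directly. The paper supplies the missing piece as a new lemma (valid for arbitrary $\Gamma$, not only amenable): if $\mathrm{type}(a)=\mathrm{type}(b)$ is concentrated on finite-index subgroups, then $a\sim b$. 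The argument fixes an index $n$, encodes each orbit by a canonically chosen homomorphism $\Gamma\to\mathrm{Sym}(n)$, observes that equality of types forces the distributions of these homomorphisms to agree, and then builds an explicit element of $\mathrm{Aut}(X,\mu)$ intertwining $a$ and $b$ on any prescribed finite window $F\subseteq\Gamma$. Combining this with Tucker-Drob's infinite-index case gives Proposition~\ref{prop8}, which is the injectivity you need; this finite-index lemma is the genuinely new content of the paper's proof and cannot be absorbed into a citation.

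A smaller difference: for continuity the paper does not argue directly that fixed-point measures vary continuously in the topology of Section~\ref{sec3}. Instead it uses Proposition~\ref{prop8} to identify $\mathrm{A}_\sim$ with $\mathrm{A}_{\sim_s}$ for amenable $\Gamma$ and then invokes Tucker-Drob's homeomorphism $\mathrm{A}_{\sim_s}\to\mathrm{IRS}$. Your direct route is reasonable, but the assertion that $[a_n]\to[a]$ forces convergence of $\mu\bigl(\bigcap_{\gamma\in F}\mathrm{Fix}(\gamma^{a_n})\bigr)$ needs justification: recovering this quantity from the sets $C_{n,k}(a)$ involves an infimum over partitions followed by a limit in $k$, and one must check this is continuous under Hausdorff convergence of the $C_{n,k}$.
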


In Section 6 we consider the structure of $\mathrm{A}_\sim(\Gamma,X,\mu)$ for general $\Gamma$. If $\Gamma$ is nonamenable, the existence of strongly ergodic actions of $\Gamma$ implies that the convex structure on this space has the pathology that the convex combination of a point $x$ with itself might be different from $x$. This is why we need to consider weak convex spaces instead of just convex spaces. The main result of this section is the following Krein-Milman type theorem.

\begin{theorem} \label{thm2} $\mathrm{A}_\sim(\Gamma,X,\mu)$ is equal to the closed convex hull of its extreme points. In other words, finite convex combinations of the extreme points of $\mathrm{A}_\sim(\Gamma,X,\mu)$ are dense in $\mathrm{A}_\sim(\Gamma,X,\mu)$. \end{theorem}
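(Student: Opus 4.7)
The plan is to combine the ergodic decomposition of a measure-preserving action with the topological-convex compatibility established in Section 4, specifically the assertion that the distance to any compact convex set is a convex function. Given an action $\Gamma\curvearrowright^a (X,\mu)$, the ergodic decomposition $\mu=\int_E \mu_e\,d\tau(e)$ exhibits $[a]$ as a ``continuous'' convex combination $[a]=\int_E[a_e]\,d\tau(e)$ of the weak equivalence classes of the ergodic components $a_e$. The theorem then splits into two tasks: (i) verifying that the weak equivalence class of any ergodic action is an extreme point of $\mathrm{A}_\sim(\Gamma,X,\mu)$, and (ii) approximating the integral above by finite convex combinations.

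For (ii), I would proceed by a Riemann-sum argument. Given $\varepsilon>0$, the compactness of $\mathrm{A}_\sim(\Gamma,X,\mu)$ in the Abert--Elek topology from Section 3 allows me to partition $E$ into finitely many Borel pieces $E_1,\ldots,E_n$ on which the map $e\mapsto[a_e]$ has oscillation less than $\varepsilon$ (in the Polish metric). Choosing representatives $e_i\in E_i$ and setting $\alpha_i=\tau(E_i)$, the Section 4 convexity of the distance function to a compact convex set (applied either to $\{[a]\}$ or to a suitable closed convex set containing the relevant classes) implies that the finite convex combination $\sum_{i=1}^n\alpha_i[a_{e_i}]$ lies within $\varepsilon$ of $[a]$. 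Each summand $[a_{e_i}]$ is extreme by (i), so $[a]$ is exhibited as a limit of finite convex combinations of extreme points.

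For (i), suppose $a$ is ergodic and $[a]=\alpha[b]+(1-\alpha)[c]$ with $0<\alpha<1$. A representative of the right-hand side is the disjoint-union action $\alpha b \sqcup (1-\alpha)c$ on $\alpha Y \sqcup (1-\alpha)Z$. Using $a\prec\alpha b\sqcup(1-\alpha)c$, any finite Borel partition of $X$ is approximately modeled by sets $B_1,\ldots,B_k$ in the disjoint union; ergodicity of $a$ forces the $B_j$ to respect the $\alpha:(1-\alpha)$ partition of the disjoint union, so that after renormalizing each side one obtains both $a\prec b$ and $a\prec c$. The reverse weak containments follow automatically from $b,c\prec\alpha b\sqcup(1-\alpha)c\sim a$, yielding $[a]=[b]=[c]$ in the sense of the weak convex space framework of Section 2.

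The main obstacle will be (i). The weak convex space setting is considerably more delicate than classical convexity: as noted at the start of Section 6, strong ergodicity produces the pathology $\tfrac12[a]+\tfrac12[a]\neq[a]$, so the classical symbol manipulations cannot be transported verbatim. The use of ergodicity to pin down the proportions in a putative nontrivial decomposition is subtle and must be made quantitative against the $\varepsilon$--parameters hidden in the definition of weak containment. Should the direct approach fail to give extremality for \emph{every} ergodic class, I would fall back on a smaller class of clearly-extreme actions, for instance those whose stabilizer type is an extreme point of $\mathrm{IRS}(\Gamma)$ (which by Theorem \ref{thm1} gives a robust supply when restricted to the amenable quotient), and then refine the ergodic decomposition by a further disintegration over stabilizer types so that the approximation in step (ii) still terminates at genuinely extreme points.
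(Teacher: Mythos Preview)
Your approach diverges substantially from the paper's, and both steps have genuine gaps tied to the weak-convex pathology you yourself flag.

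\textbf{Step (i) fails as written.} The assertion ``$b,c\prec\alpha b\sqcup(1-\alpha)c$'' is not automatic, and is in general false. Any partition of the disjoint union $\alpha Y\sqcup(1-\alpha)Z$ splits as $\mathcal A'\sqcup\mathcal C$, and the resulting intersection numbers are exactly $\alpha M^{\mathcal A'}(b)+(1-\alpha)M^{\mathcal C}(c)$. Thus $C_{n,k}(\alpha b+(1-\alpha)c)=\overline{\alpha C_{n,k}(b)+(1-\alpha)C_{n,k}(c)}$, and $b\prec\alpha b+(1-\alpha)c$ would force $C_{n,k}(b)\subseteq\overline{\alpha C_{n,k}(b)+(1-\alpha)C_{n,k}(c)}$, which already fails for $b$ the Bernoulli shift and $c=\iota$ (off-diagonal entries in $C_{n,2}(\iota)$ vanish, so the right side is strictly smaller in those coordinates). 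What ergodicity of $a$ \emph{does} give, via Theorem~3.11 of \cite{RTD}, is $a\prec b$ and $a\prec c$; combining these with $a\sim\alpha b+(1-\alpha)c$ only yields $a\sim\alpha a+(1-\alpha)a$, not $b\sim a\sim c$. The paper does not claim that every ergodic class is extreme in $\mathrm A_\sim(\Gamma,X,\mu)$---only strongly ergodic ones are asserted to be---so this is not merely a gap in your argument but possibly a false target.

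\textbf{Step (ii) also breaks on the same pathology.} Partitioning $E$ into pieces $E_i$ of small oscillation and writing $b_i=\int_{E_i}a_e\,d\tau_{E_i}$ gives $[a]=\sum_i\alpha_i[b_i]$ exactly, and the metric inequality from Lemma~\ref{lem3} gives $d\bigl(\sum\alpha_i[b_i],\sum\alpha_i[a_{e_i}]\bigr)\le\sum\alpha_id([b_i],[a_{e_i}])$. But $d([b_i],[a_{e_i}])$ need not be small: even when every $a_e$ with $e\in E_i$ is \emph{equal} to $a_{e_i}$, one has $b_i\cong a_{e_i}\times\iota_{(E_i,\tau_{E_i})}$, which is at positive distance from $a_{e_i}$ whenever $a_{e_i}$ is strongly ergodic. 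So the Riemann sum does not converge to the integrand's values but to their products with $\iota$, and those are neither ergodic nor, a priori, extreme.

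\textbf{What the paper does instead.} The proof avoids ergodic decomposition entirely and runs a Krein--Milman argument internal to the weak convex space. One sets $B=\cch(\ex(A))$, assumes $A\setminus B\neq\emptyset$, and uses Lemma~\ref{lem3} to see that the set where $d(\cdot,B)$ attains its (positive) supremum is a face $C$ disjoint from $B$. Zorn's lemma produces a minimal face $F\subseteq C$; minimality forces $F\subseteq\ch(\{y\})$ for every $y\in F$. The crux is then to show such a $y$ is extreme: if $y=ta+(1-t)b$ with $a,b\in F$, one iterates to write $y\sim\sum_i\lambda_iy$ with all $\lambda_i$ arbitrarily small, and from this deduces $y\sim\kappa y+(1-\kappa)y$ for every $\kappa$, which suffices. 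This is precisely the manoeuvre needed to neutralise the $\tfrac12 y+\tfrac12 y\neq y$ pathology, and it has no analogue in your outline.
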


Given this result, it seems interesting to describe the extreme points of $\mathrm{A}_\sim(\Gamma,X,\mu)$. In the amenable case, the identification with $\mathrm{IRS}(\Gamma)$ provides a complete such description, since the extreme points of $\mathrm{IRS}(\Gamma)$ are known to be the ergodic measures and consequently the extreme points of $\mathrm{A}_\sim(\Gamma,X,\mu)$ for amenable $\Gamma$ are exactly those actions with ergodic type. In the nonamenable case this description does not suffice. It is clear that any strongly ergodic action is an extreme point. We are able to show the following.

\begin{theorem} \label{thm4} Suppose $[a] \in \mathrm{A}_\sim(\Gamma,X,\mu)$ is an extreme point. Let $a = \int_Z a_z d \eta(z)$ be the ergodic decomposition of $a$. Then there is a measure-preserving action $b$ of $\Gamma$ such that for $\eta$-almost all $z \in Z$ we have $[a_z] = [b]$. \end{theorem}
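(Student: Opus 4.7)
My plan is to prove that $z \mapsto [a_z]$ is $\eta$-essentially constant in $\mathrm{A}_\sim(\Gamma,X,\mu)$, after which the theorem follows by taking $b = a_{z_0}$ for any $z_0$ in the $\eta$-conull good set. The central tool is the standard combinatorial parameterization of weak equivalence by the compact correlation sets
\[
K(a,n,F) := \overline{\big\{(\mu(\gamma^a A_i \cap A_j))_{\gamma \in F,\, 1 \leq i,j \leq n} : (A_1,\ldots,A_n) \text{ is an ordered Borel partition of } X\big\}} \subseteq [0,1]^{F \times n \times n},
\]
for which $a \sim b$ is equivalent to $K(a,n,F) = K(b,n,F)$ for every $n \in \mathbb{N}$ and every finite $F \subseteq \Gamma$.

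The first step is to apply extremality to binary partitions of $Z$. For a measurable $A \subseteq Z$ with $0 < \eta(A) < 1$, let $a|_A$ be the restriction of $a$ to the ergodic components indexed by $A$ with normalized measure; then $a$ is isomorphic as an action to $\eta(A)\,a|_A + (1-\eta(A))\,a|_{A^c}$, so $[a] = \eta(A)[a|_A] + (1-\eta(A))[a|_{A^c}]$ in $\mathrm{A}_\sim(\Gamma,X,\mu)$, and extremality forces $[a|_A] = [a|_{A^c}]$, i.e., $K(a|_A,n,F) = K(a|_{A^c},n,F)$ for all $n,F$.

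I would then combine this with the Aumann integral identification
\[
K(a|_A,n,F) \;=\; \overline{\;\eta(A)^{-1}\int_A K(a_z,n,F)\,d\eta(z)\;},
\]
justified by a Kuratowski--Ryll-Nardzewski measurable selection of fiber partitions. Applying the support function $h_K(\xi) := \sup_{M \in K} \langle M,\xi\rangle$ (which coincides with that of $\overline{\mathrm{conv}}(K)$), the relation $K(a|_A,n,F) = K(a|_{A^c},n,F)$ rewrites as
\[
\frac{1}{\eta(A)}\int_A h_{K(a_z,n,F)}(\xi)\,d\eta(z) \;=\; \frac{1}{\eta(A^c)}\int_{A^c} h_{K(a_z,n,F)}(\xi)\,d\eta(z)
\]
for every $A$ of positive measure and complement, which forces $z \mapsto h_{K(a_z,n,F)}(\xi)$ to be $\eta$-essentially constant. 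Aggregating over a countable dense family of $(\xi,n,F)$ then produces an $\eta$-conull $Z_0 \subseteq Z$ on which $\overline{\mathrm{conv}}(K(a_z,n,F))$ is independent of $z$ for every $n,F$; equivalently, the $a_z$ are all stably weakly equivalent to a common action on $Z_0$.

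The main obstacle is upgrading stable weak equivalence to genuine weak equivalence of the $a_z$, needed because $K(a_z,n,F)$ can fail to be convex when $a_z$ is strongly ergodic. I would handle $\eta$-atoms directly (extremality and induction produce a common class on the atomic part), and attack the non-atomic part through a Lebesgue-differentiation argument: metrize $Z$ and use extremality to see that the averaged set $K(a|_{B_r(z)},n,F)$ is, via Richter--Lyapunov convexification on the non-atomic part, a fixed closed convex set $K^\ast(n,F)$ independent of $z$ and $r$; set-valued differentiation in Hausdorff metric identifies this constant with $\overline{\mathrm{conv}}(K(a_z,n,F))$ at $\eta$-a.e.\ $z$, and reconciling this with the extremality of $[a]$ (used to rule out the existence of a positive-measure set of $z$ on which $K(a_z,n,F)$ is a proper subset of $K^\ast(n,F)$) is the most delicate point. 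Once this upgrade is achieved, a countable intersection over $(n,F)$ produces a single conull $Z_0$ on which $K(a_z,n,F)$ is literally constant, and the theorem follows.
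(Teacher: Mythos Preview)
Your route through Aumann integrals and support functions is a genuinely different organization from the paper's proof, and the first three steps are essentially sound (modulo verifying the Aumann identification $K(a|_A,n,F)=\overline{\eta(A)^{-1}\int_A K(a_z,n,F)\,d\eta}$, whose $\supseteq$ direction requires a measurable selection of approximating fiber partitions that you invoke but do not carry out). Where the paper argues by contradiction---supposing the classes $[a_z]$ vary, producing positive-measure sets $A_3,A_4$ and indices $n_0,k_0$ with $C_{n_0,k_0}(a_z)\not\subseteq\cch(C_{n_0,k_0}(a_w))$, separating by a hyperplane, and showing that the resulting two sub-integrals $\int_K a_z\,d\eta_K$ and $\int_G a_u\,d\eta_G$ are not weakly equivalent---you instead deduce directly from extremality that the conditional averages of $h_{K(a_z,n,F)}(\xi)$ are constant, hence $\cch(K(a_z,n,F))$ is $\eta$-a.e.\ constant, i.e.\ the $a_z$ are pairwise stably weakly equivalent. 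That is a clean argument and in some ways more transparent than the paper's hyperplane bookkeeping.

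The gap is in your final step. You identify the upgrade from stable weak equivalence to weak equivalence as ``the most delicate point'' and propose a Lebesgue-differentiation scheme that is both vague and unnecessary. The resolution is a one-liner that you have overlooked and that the paper uses right at the outset of its argument: the $a_z$ are the \emph{ergodic} components of $a$, and for an ergodic action one has $a_z\prec a_w\times\iota\Rightarrow a_z\prec a_w$ (this is the content of Theorem 3.11 in \cite{RTD}, which the paper cites explicitly). Hence for ergodic $a_z,a_w$, $a_z\sim_s a_w$ is equivalent to $a_z\sim a_w$, and your ``main obstacle'' evaporates. Without this observation your proposal does not close: the differentiation argument you sketch never explains how extremality of $[a]$ in $\mathrm{A}_\sim(\Gamma,X,\mu)$---a statement about the non-convex sets $K$---could rule out a positive-measure set of $z$ on which $K(a_z,n,F)\subsetneq K^\ast(n,F)$, since extremality applied to $A,A^c$ only controls the convex hulls. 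Once you insert the ergodicity fact, your argument is complete and constitutes a valid alternative proof.
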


Let $\mathrm{FR}_{\sim}(\Gamma,X,\mu)$ denote the subspace of $\mathrm{A}_\sim(\Gamma,X,\mu)$ consisting of the weak equivalence classes of free actions. We prove:

\begin{theorem} \label{thm5} Let $\mathbb{F}_N$ be a free group of finite or countably infinite rank. Then the weak equivalence classes containing a free ergodic action are dense in $\mathrm{FR}_{\sim}(\mathbb{F}_N,X,\mu)$.\end{theorem}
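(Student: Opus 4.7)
The plan is to construct the approximating free ergodic action directly from the ergodic decomposition of a given free action, exploiting the absence of relations in $\mathbb{F}_N$ to merge the ergodic pieces into a single ergodic action by modifying one generator on a small set. The case $N = 1$ is immediate from Theorem \ref{thm1}: the identification $\mathrm{A}_\sim(\mathbb{Z}, X, \mu) \cong \mathrm{IRS}(\mathbb{Z})$ takes $\mathrm{FR}_\sim(\mathbb{Z}, X, \mu)$ to the singleton $\{\delta_{\{0\}}\}$, realized by any free ergodic $\mathbb{Z}$-action. So assume $N \ge 2$, and fix $[a] \in \mathrm{FR}_\sim(\mathbb{F}_N, X, \mu)$ together with a basic Abert-Elek neighborhood $U$ of $[a]$ determined by a finite $F \subseteq \mathbb{F}_N$, a Borel partition $A_1, \dots, A_n$ of $X$, and tolerance $\epsilon > 0$.

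Let $a = \int_Z a_z \, d\eta(z)$ be the ergodic decomposition of $a$; since $a$ is free, each $a_z$ is a free ergodic action. A Riemann-type discretization applied to the finite family of bounded measurable functions $z \mapsto \mu_z(\gamma^a A_j \cap A_l)$ produces a measurable partition $Z = Z_1 \sqcup \cdots \sqcup Z_k$, weights $\alpha_i = \eta(Z_i) > 0$, and representatives $z_i \in Z_i$ such that for all $\gamma \in F$ and $j, l \le n$,
\[ \left| \mu(\gamma^a A_j \cap A_l) - \sum_{i=1}^k \alpha_i \mu_{z_i}(\gamma^{a_{z_i}} A_j \cap A_l) \right| < \epsilon/2. \]
The right-hand sum is realized as a pattern of the convex combination $c = \sum_i \alpha_i a_{z_i}$ on $Y = \bigsqcup_i \alpha_i X$, with $B_j \subseteq Y$ taken to be the union of copies of $A_j$ across the summands. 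Writing $\mathbb{F}_N = \langle g_1, \dots, g_N \rangle$, I define a new action $b$ on $Y$ by keeping $b(g_r) = c(g_r)$ for $r \ge 2$ and modifying $c(g_1)$ on a set $E \subseteq Y$ of measure $\delta$ so that the modified transformation cyclically permutes disjoint pieces of the summands. Since $\mathbb{F}_N$ has no relations, any assignment of $N$ measure-preserving automorphisms to the generators yields a valid action; a Rokhlin-tower argument lets us arrange for $b(g_1)$ to be aperiodic with its orbit meeting every summand, ensuring that $b$ is both free and ergodic.

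For each $\gamma \in F$ of word length at most $L$, the transformations $\gamma^b$ and $\gamma^c$ coincide off a set of measure at most $L\delta$, so every pattern value of $b$ on the $B_j$ differs from the corresponding $c$-value by at most $L\delta$. Choosing $\delta < \epsilon/(2L)$ places the $b$-pattern on $(F, B_j)$ within $\epsilon$ of the $a$-pattern on $(F, A_j)$; a parallel argument produces, for every other partition of $Y$ realized in $b$, a matching partition in $X$ for $a$, confirming that $[b] \in U$ in the Abert-Elek topology. The principal obstacle is the merging construction: arranging the perturbation on $E$ so that the resulting action is simultaneously free and ergodic, together with verifying closeness in the Abert-Elek topology (strictly finer than the quotient weak topology on $\mathrm{A}(\mathbb{F}_N, X, \mu)$). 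Freeness and ergodicity follow from Rokhlin-tower techniques together with the freeness of each $a_{z_i}$, while the pattern estimate relies on the word-length bound and the fact that only one generator is modified on a set of measure $\delta$.
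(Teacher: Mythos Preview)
Your proposal has a genuine gap rooted in a misreading of the Ab\'ert--Elek topology. A basic neighborhood of $[a]$ is \emph{not} determined by a single partition $A_1,\dots,A_n$; the metric $d$ is built from the Hausdorff distances $d_H(C_{n,k}(a),C_{n,k}(b))$, so one must control \emph{all} $k$-partitions for $k\le k_0$ and group elements up to $n_0$, in both directions. Your Riemann discretization shows only that the single point $M^{(A_j)}_{n,k}(a)\in C_{n,k}(a)$ is close to one point of $C_{n,k}(c)$; it says nothing about the rest of $C_{n,k}(a)$, and nothing about the reverse inclusion. The step from $c$ to $b$ is fine (a uniform perturbation of one generator does yield $d_H(C_{n,k}(b),C_{n,k}(c))$ small for all partitions), but the step from $a$ to $c$ is not established. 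The assertion that ``a parallel argument'' matches every $b$-pattern by an $a$-pattern is exactly the hard content: there is no reason the finite sum $c=\sum_i\alpha_i a_{z_i}$ should be close to $a$ in the Ab\'ert--Elek metric merely because finitely many intersection numbers agree. In particular, an ergodic component $a_{z_i}$ need not be weakly contained in $a$, so $C_{n,k}(c)$ may contain points far from $C_{n,k}(a)$.

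A repair along your lines would require first arranging (via Lusin applied to the Borel map $z\mapsto[a_z]$, Lemma~\ref{lem10}) that $[a_z]$ is uniformly close to $[a_{z_i}]$ for $z\in Z_i$, and then proving a quantitative version of the statement that $[\int_Z a_z\,d\eta]$ depends continuously on the integrand---neither of which you carry out. The paper takes a quite different route: it first replaces $a$ by $a\times\iota$ so that each $C_{n,k}(a)$ is convex, then perturbs one generator uniformly to get an ergodic $c$ (so $d_H(C_{n,k}(a),C_{n,k}(c))$ is automatically small for all partitions), and finally sets $b=c\times s$ with $s$ the Bernoulli shift. The substantial work is showing $d_H(C_{n,k}(c),C_{n,k}(b))$ is small, done by an Ab\'ert--Weiss random-partition argument producing a partition $\mathcal{T}$ of $X$ that mimics the independent coordinate of $s$. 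This avoids the ergodic decomposition entirely and handles both Hausdorff directions at once via the uniform-metric control.
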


In Section 7 we use a characterization of convex subsets of Banach spaces from \cite{CFr} to show the following.

\begin{theorem}\label{thm3} For any $\Gamma$, the space $\mathrm{A}_{\sim_s}(\Gamma,X,\mu)$ is isomorphic to a compact convex subset of a Banach space. \end{theorem}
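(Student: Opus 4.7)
The strategy is to invoke a characterization from \cite{CFr} identifying the compact Hausdorff convex spaces that admit an affine homeomorphism onto a compact convex subset of a Banach space: beyond the idempotency axiom $\frac{1}{2}x + \frac{1}{2}x = x$, the essential requirement is Rådström-type cancellation, namely that $\frac{1}{2}x + \frac{1}{2}z = \frac{1}{2}y + \frac{1}{2}z$ implies $x = y$.

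Idempotency is precisely the feature distinguishing $\mathrm{A}_{\sim_s}(\Gamma,X,\mu)$ from $\mathrm{A}_\sim(\Gamma,X,\mu)$. Directly from the definition of $\prec_s$, the action $\sum_{i=1}^m \alpha_i a$ is stably weakly equivalent to $a$ for any weights, so $\sum_i \alpha_i [a]_s = [a]_s$ on the level of equivalence classes. Combined with the convex framework of Section 2 and the topology of Section 3, this makes $\mathrm{A}_{\sim_s}$ a genuine compact Hausdorff topological convex space in the sense of Fritz.

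For cancellation, the natural finite-dimensional invariants of stable weak equivalence should be the main tool. For each $n \ge 1$, set
\[ E_n(a) = \bigl\{\, (\mu(\gamma^a A_i \cap A_j))_{\gamma \in \Gamma,\, 1 \le i,j \le n} : A_1,\ldots,A_n \text{ partitions } X \,\bigr\} \subseteq [0,1]^{\Gamma \times \{1,\ldots,n\}^2}, \]
and let $K_n(a) = \overline{\mathrm{conv}}(E_n(a))$ be the closed convex hull in the product topology. Unpacking the definition of $\prec_s$ shows that $a \prec_s b$ iff $E_n(a) \subseteq K_n(b)$ for every $n$, hence $a \sim_s b$ iff $K_n(a) = K_n(b)$ for every $n$. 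A short computation reducing a partition of $\bigsqcup_i \alpha_i Y_i$ to partitions of each summand yields
\[ E_n\bigl(\textstyle \sum_i \alpha_i a_i\bigr) = \sum_i \alpha_i E_n(a_i) \]
as Minkowski sums with scalar weights, and the same identity passes to $K_n$ by taking closed convex hulls.

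Cancellation now follows immediately. If $\frac{1}{2}[a]_s + \frac{1}{2}[c]_s = \frac{1}{2}[b]_s + \frac{1}{2}[c]_s$ in $\mathrm{A}_{\sim_s}$, then for every $n$ we have $\frac{1}{2}K_n(a) + \frac{1}{2}K_n(c) = \frac{1}{2}K_n(b) + \frac{1}{2}K_n(c)$, an equality of nonempty compact convex subsets of a Hausdorff locally convex TVS. Rådström's cancellation theorem (in Hörmander's TVS formulation) then yields $K_n(a) = K_n(b)$ for every $n$, whence $[a]_s = [b]_s$. With idempotency, cancellation, and the compatibility of the convex and topological structures from Section 4 all in hand, the Capraro--Fritz characterization supplies the desired affine homeomorphism onto a compact convex subset of a Banach space. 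The subtlest point, and the expected main obstacle, is reconciling the precise formulation of the \cite{CFr} theorem with the axioms verified here, in particular ensuring the embedding is simultaneously affine and a homeomorphism rather than merely an abstract isomorphism of convex spaces.
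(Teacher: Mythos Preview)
Your argument is correct, and it reaches the same destination as the paper but by a different road. The paper does not verify cancellation directly. Instead it proves a metric contraction inequality (its Proposition~7.1, reduced to Proposition~7.2): for all $a,b,c$ and $t\in[0,1]$,
\[
d_s\bigl(ta+(1-t)c,\;tb+(1-t)c\bigr)\;\le\;t\,d_s(a,b),
\]
established by a one-line estimate on partitions of $X_1\sqcup X_2$. This inequality is exactly the hypothesis of Corollary~12 in \cite{CFr}, so the embedding comes out in one stroke, affine and isometric at once.

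Your route instead makes explicit the invariants $K_n(a)=\cch(E_n(a))$ (these are the $\cch(C_{n,k}(a))$ in the paper's notation), observes the Minkowski identity $K_n\bigl(\sum_i\alpha_i a_i\bigr)=\sum_i\alpha_i K_n(a_i)$, and then invokes R\aa dstr\"om--H\"ormander cancellation for compact convex sets in the locally convex product $[0,1]^{\Gamma\times n^2}$. This is perfectly valid and has the virtue of isolating exactly which structural fact about $\sim_s$ is doing the work. The cost is that you must be sure the form of the Capraro--Fritz theorem you cite accepts cancellation plus compactness plus continuity of $cc$ as its input, rather than the metric inequality; you flag this yourself, and it is indeed the only place where care is required. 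The paper's approach sidesteps that issue entirely by proving the metric hypothesis directly, and as a bonus yields a quantitative estimate that your cancellation argument does not.
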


We characterize the extreme points of $A_{\sim_s}(\Gamma,X,\mu)$ as precisely those stable weak equivalence classes which contain an ergodic action. This result was obtained by Tucker-Drob and Bowen independently of the author. Tucker-Drob and Bowen have also shown that $\mathrm{A}_{\sim_s}(\Gamma,X,\mu)$ is a simplex, and the set $\mathrm{FR}_{\sim_s}(\Gamma,X,\mu)$ of stable weak equivalence classes of free actions is a subsimplex. Recall that a Poulsen simplex is a simplex such that the extreme points are dense. Thus from Theorem \ref{thm5} we have:

 \begin{corollary}\label{cor1} Let $\mathbb{F}_N$ be a free group of finite or countably infinite rank. Then $\mathrm{FR}_{\sim_s}(\mathbb{F}_N,X,\mu)$ is a Poulsen simplex. \end{corollary}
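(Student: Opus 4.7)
The plan is to assemble the corollary from three ingredients stated earlier: Theorem \ref{thm5} (density of free ergodic classes in $\mathrm{FR}_\sim(\mathbb{F}_N,X,\mu)$), the simplex structure on $\mathrm{FR}_{\sim_s}(\mathbb{F}_N,X,\mu)$ (attributed to Tucker-Drob and Bowen), and the characterization of extreme points of $\mathrm{A}_{\sim_s}(\Gamma,X,\mu)$ as precisely those stable weak equivalence classes that contain an ergodic representative.

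First I would record the natural quotient map $q: \mathrm{A}_\sim(\mathbb{F}_N,X,\mu)\to\mathrm{A}_{\sim_s}(\mathbb{F}_N,X,\mu)$ sending $[a]\mapsto [a]_s$. Since weak equivalence refines stable weak equivalence, $q$ is well defined, and from the descriptions of the two topologies in Section 3 it is continuous and surjective. Because $q$ preserves freeness (any representative of $[a]_s$ is free if and only if some, equivalently any, representative is), it restricts to a continuous surjection $q:\mathrm{FR}_\sim(\mathbb{F}_N,X,\mu)\to\mathrm{FR}_{\sim_s}(\mathbb{F}_N,X,\mu)$.

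Next I would apply Theorem \ref{thm5}: the set $D\subseteq \mathrm{FR}_\sim(\mathbb{F}_N,X,\mu)$ of weak equivalence classes containing a free ergodic action is dense. Continuity of $q$ and surjectivity onto $\mathrm{FR}_{\sim_s}(\mathbb{F}_N,X,\mu)$ imply that $q(D)$ is dense in $\mathrm{FR}_{\sim_s}(\mathbb{F}_N,X,\mu)$. Every element of $q(D)$ is a stable weak equivalence class $[a]_s$ with $a$ free and ergodic, so by the extreme-point characterization of $\mathrm{A}_{\sim_s}$ described in Section 7, $[a]_s$ is an extreme point of $\mathrm{A}_{\sim_s}(\mathbb{F}_N,X,\mu)$. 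Since $\mathrm{FR}_{\sim_s}(\mathbb{F}_N,X,\mu)$ is a subsimplex, i.e.\ a face, any extreme point of $\mathrm{A}_{\sim_s}(\mathbb{F}_N,X,\mu)$ which lies in $\mathrm{FR}_{\sim_s}(\mathbb{F}_N,X,\mu)$ is extreme in $\mathrm{FR}_{\sim_s}(\mathbb{F}_N,X,\mu)$.

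Combining these steps, the extreme points of $\mathrm{FR}_{\sim_s}(\mathbb{F}_N,X,\mu)$ are dense, and since the space is already known to be a simplex, it is a Poulsen simplex. The main obstacle is really absorbed into Theorem \ref{thm5}; the remaining work here is only to verify that the quotient $q$ is continuous in the topologies fixed in Section 3 and that passing to a face preserves the notion of extreme point, both of which are essentially formal given the framework set up earlier in the paper.
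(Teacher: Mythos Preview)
Your proposal is correct and follows exactly the route the paper intends: the paper derives the corollary in one line from Theorem \ref{thm5}, the Tucker-Drob--Bowen simplex result, and the extreme-point characterization, and you have simply made explicit the continuous surjection $q:\mathrm{FR}_\sim\to\mathrm{FR}_{\sim_s}$ that transports the density statement from weak to stable weak equivalence classes. One minor remark: you do not actually need that $\mathrm{FR}_{\sim_s}$ is a face to conclude that an extreme point of $\mathrm{A}_{\sim_s}$ lying in $\mathrm{FR}_{\sim_s}$ is extreme there, since that implication holds for any subset.
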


\subsection*{Acknowledgements}

We would like to thank Alexander Kechris for introducing us to this topic and for many helpful discussions. We also thank Robin Tucker-Drob for informing us of his result with Bowen that the space of stable weak equivalence classes forms a simplex, and for raising the question of when it forms a Poulsen simplex.

\section{Weak convex spaces.}

We first describe the formalism realized by $\mathrm{A}_\sim(\Gamma,X,\mu)$.

\subsection{Convex spaces and weak convex spaces.}

Convex spaces were introduced in \cite{Fr} and further developed in \cite{CFr} as an abstract setting to study the notion of convex combination.

\begin{definition}\label{def1} \cite{Fr} A \textbf{\textbf{convex space}} is a set $X$ together with a family $\mathcal{V}$ of binary operations $cc_{t}$ for each $t \in [0,1]$ such that for all $x,y,z \in X$ and all $s,t \in [0,1]$ \begin{description} \item{(1)} $cc_0(x,y) = x$, \item{(2)} $cc_t(x,x) = x$, \item{(3)} $cc_t(x,y) = cc_{1-t}(y,x)$, \item{(4)} $cc_{t}(cc_s(x,y),z) = cc_{st}\left (x,cc_{\frac{t(1-s)}{1-st}}(y,z)\right)$. \end{description} \end{definition}

We will usually write $tx +_{\mathcal{V}} (1-t)y$ for $cc_t(x,y)$, omitting the subscript $\mathcal{V}$ when the convex structure being considered is clear. Note that $\mathrm{(4)}$ allows us to unambiguously define $\sum_{i=1}^n \lambda_i x_i$ for $(x_i)_{i=1}^n \subseteq X$ and $(\lambda_i)_{i=1}^n \subseteq [0,1]$ such that $\sum_{i=1}^n \lambda_i = 1$. We will need to weaken the definition of a convex space to cover the situation where a convex combination of a point $x$ with itself could be different from $x$.

\begin{definition} An \textbf{\textbf{weak convex space}} is a set $X$ with a family $cc_{t}$ of binary operations for $t \in [0,1]$ satisfying $\mathrm{(1)},\mathrm{(3)}$ and $\mathrm{(4)}$ of Definition \ref{def1}. \end{definition}

\begin{definition} A \textbf{\textbf{topological (weak) convex space}} is a topological space $X$ carrying a (weak) convex structure such that the ternary operation $cc: [0,1] \times X^2 \to X$ given by $cc(t,x,y) = cc_t(x,y)$ is continuous. \end{definition}

\subsection{Extreme points and faces.}

We can define extreme points in a weak convex space in exactly the same way as in a vector space.

\begin{definition} If $A$ is a convex set in a weak convex space, we say $x \in A$ is an \textbf{\textbf{extreme point}} if $x = ty + (1-t)z$ for $0 < t<1$ and some $y,z \in A$ implies $y = z = x$. Write $\mathrm{ex}(A)$ for the set of extreme points of $A$. If $A$ is a compact convex subset of a topological weak convex space, we say a \textbf{\textbf{face}} of $A$ is a nonempty closed subset $F \subseteq A$ such that if $x,y \in A$, $0 <t <1$ and $tx + (1-t)y \in F$ then $x,y \in F$. \end{definition}

\section{Topology on $\mathrm{A}_\sim(\Gamma,X,\mu)$.} \label{sec3}

Let $\Gamma$ be a countable group and $\mathrm{A}_\sim(\Gamma,X,\mu)$ be its space of actions modulo weak equivalence. We consider a metric on $\mathrm{A}_\sim(\Gamma,X,\mu)$ which is implicit in \cite{AbEl}.\\
\\
Fix an enumeration $(\gamma_i)_{i=0}^\infty$ of $\Gamma$. If $\mathcal{A} = \{A_1,\ldots,A_k\}$ is a partition of $X$ into $k$ pieces, $a \in \mathrm{A}(\Gamma,X,\mu)$ and $n \in \mathbb{N}$, let $M^{\mathcal{A}}_{n,k}(a) \in [0,1]^{n \times k \times k}$ be the point whose $p,q,r$ coordinate is $\mu(\gamma_p^a A_q \cap A_r)$, where $p \leq n$ and $q,r \leq k$. Let $C_{n,k}(a) = \overline{ \{ M_{n,k}^\mathcal{A}(a): \mathcal{A} \mbox{ is a partition of } X \mbox{ into } k \mbox{ pieces.} \} }$ Then we can define a pseudometric $d$ on $\mathrm{A}(\Gamma,X,\mu)$ by the formula \[ d(a,b) = \sum_{n,k=1}^\infty \frac{1}{2^{n+k}} d_H( C_{n,k}(a), C_{n,k}(b) ) \] where $d_H$ is the Hausdorff distance in the hyperspace of compact subsets of $[0,1]^{n \times k \times k}$. It is easy to see that $a \sim b$ if and only if $d(a,b) = 0$, so $d$ descends to a metric on $\mathrm{A}_\sim(\Gamma,X,\mu)$, which we also denote by $d$. Let $\tau_1$ be the topology induced by $d$. We note that this definition extends to actions on countable spaces. We will write $A_\sim^*(\Gamma)$ for the space of all actions of $\Gamma$ on probability spaces. \\
\\
We now describe a different construction of the topology on $\mathrm{A}_\sim(\Gamma,X,\mu)$ due to Tucker-Drob \cite{RTD} in order to show it agrees with the one we have just introduced. (Tucker-Drob shows in \cite{RTD} that his formulation agrees with the one from \cite{AbEl}). \\
\\
Let $S$ be a compact Polish space, and consider $S^\Gamma$, which is also a compact Polish space. $\Gamma$ acts on $S^\Gamma$ by the shift action $s$ given by $(\gamma^s f)(\delta) = f(\gamma^{-1} \delta)$. Let $M_s(S^\Gamma)$ be the compact Polish space of shift-invariant probability measures on $S^\Gamma$ and let $\mathcal{K}_S = \mathcal{K}(M_s(S^\Gamma))$ be the hyperspace of compact subsets of $M_s(S^\Gamma)$ equipped with the Hausdorff topology. Then $\mathcal{K}_S$ is again compact and Polish. Now consider a $S$-valued random variable $\phi \in L(X,\mu,S)$ on $X$, that is to say a measurable map $\phi: X \to S$. For each a measure-preserving action $a \in \mathrm{A}(\Gamma,X,\mu)$ we get a map $\Phi_S^{\phi,a}: X \to S^\Gamma$ by letting $\Phi_S^{\phi,a}(x)(\gamma) = \phi( (\gamma^{-1})^a x)$ and consequently a shift-invariant measure $(\Phi_S^{\phi,a})_* \mu$ on $S^\Gamma$. Then define a subset $E(a,S)$ of $M_s(S^\Gamma)$ by \[E(a,S) = \{ (\Phi_S^{\phi,a})_* \mu: \phi: X \to S \mbox{ is measurable} \}.\]  Let $\Phi_S: \mathrm{A}(\Gamma,X,\mu) \to \mathcal{K}_S$ be given by $\Phi_S(a) = \overline{E(a,S)}$. When $S = K$ is the Cantor set, we omit the subscript $S$ on the notations just introduced. By Proposition 3.5 in \cite{RTD}, we have $a \sim b$ if and only if $\Phi(a) = \Phi(b)$ so we can consider the initial topology on $\mathrm{A}_\sim(\Gamma,X,\mu)$ induced by $\Phi$. Call this $\tau_2$. We now work towards showing $\tau_1$ agrees with $\tau_2$. There will be a series of preliminary steps. This entire argument can be regarded as a `perturbed' version of Proposition 3.5 in \cite{RTD}.\\
\\
We first fix a compatible metric on $M_s(K^\Gamma)$. Let $\mathcal{A}_K$ be the collection of clopen subsets of $K^\Gamma$ of the form $\pi_F^{-1} \left ( \prod_{\gamma \in F} A_\gamma \right)$ where $A_\gamma \subseteq K$ an element of some fixed countable clopen basis for $K$, $F \subseteq \Gamma$ is finite and $\pi: K^\Gamma \to K^F$ is the projection onto the $F$-coordinates. Since the elements of $\mathcal{A}_K$ generate the Borel $\sigma$-algebra of $K^\Gamma$, for $(\nu_n)_{n=1}^\infty \subseteq M_s(K^\Gamma)$ we have $\nu_n \to \nu$ in $M_s(K^\Gamma)$ if and only if $\nu_n(A) \to \nu(A)$ for every $A \in \mathcal{A}_K$. So, enumerating the elements of $\mathcal{A}_K$ as $(A^K_i)_{i=1}^\infty$, $\delta_K$ given by \[ \delta_K(\nu,\rho) = \sum_{i=1}^\infty \frac{1}{2^i} | \nu(A^K_i) - \rho(A^K_i)| \] is a compatible metric on $M_s(K^\Gamma)$. 

\begin{lemma}\label{lem5} For any $\epsilon > 0$ there exists $k \in \mathbb{N}$ such that every $a$ and every $\phi \in L(X,\mu,K)$ there is $\psi \in L(X,\mu,K)$ with $\delta_K((\Phi^{\phi,a})_* \mu, (\Phi^{\psi,a})_* \mu) < \epsilon$ such that the range of $\psi$ has size $\leq k$. Note that $k$ depends only on $\epsilon$, not on $a$ or $\phi$. \end{lemma}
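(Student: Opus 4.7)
The plan is to exploit the geometric decay of weights in the definition of $\delta_K$: truncate the sum at a level $N = N(\epsilon)$ independent of everything else, then replace $\phi$ by a $\psi$ that is constant on the atoms of a finite Boolean algebra refining the basic clopen sets appearing in the first $N$ cylinder sets $A_1^K, \ldots, A_N^K$. This replacement will preserve the measure of each of these $N$ sets \emph{exactly}, so the distance is controlled by the tail of the geometric series.

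More concretely, first I choose $N$ so that $\sum_{i > N} 2^{-i} < \epsilon$. Each of the finitely many basic clopen cylinders $A_i^K = \pi_{F_i}^{-1}\bigl(\prod_{\gamma \in F_i} A_{i,\gamma}\bigr)$ with $i \leq N$ involves only finitely many basic clopens $A_{i,\gamma} \subseteq K$ drawn from the fixed countable clopen basis of $K$. Collect all these clopens into a finite family $B_1,\ldots,B_m \subseteq K$, and let $C_1,\ldots,C_k$ be the atoms of the Boolean algebra generated by $\{B_1,\ldots,B_m\}$; so $k \leq 2^m$ depends only on $\epsilon$. For each atom $C_j$ pick an arbitrary point $c_j \in C_j$ and define $\psi\colon X \to K$ by $\psi(x) = c_j$ whenever $\phi(x) \in C_j$. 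Then $\psi$ is measurable with $|\psi(X)| \leq k$.

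The key observation is that for any $B_\ell$ and any $x \in X$, we have $\psi(x) \in B_\ell$ if and only if $\phi(x) \in B_\ell$: indeed since atoms are minimal, $c_j \in B_\ell$ iff $C_j \subseteq B_\ell$ iff every $y \in C_j$ satisfies $y \in B_\ell$. Applying this coordinatewise, for every $i \leq N$,
\begin{equation*}
\Phi^{\phi,a}(x) \in A_i^K \iff \bigl(\forall \gamma \in F_i\bigr)\ \phi\bigl((\gamma^{-1})^a x\bigr) \in A_{i,\gamma} \iff \bigl(\forall \gamma \in F_i\bigr)\ \psi\bigl((\gamma^{-1})^a x\bigr) \in A_{i,\gamma} \iff \Phi^{\psi,a}(x) \in A_i^K.
\end{equation*}
Hence $(\Phi^{\phi,a})_*\mu(A_i^K) = (\Phi^{\psi,a})_*\mu(A_i^K)$ for all $i \leq N$, and therefore
\begin{equation*}
\delta_K\bigl((\Phi^{\phi,a})_*\mu, (\Phi^{\psi,a})_*\mu\bigr) = \sum_{i > N} \frac{1}{2^i}\bigl|(\Phi^{\phi,a})_*\mu(A_i^K) - (\Phi^{\psi,a})_*\mu(A_i^K)\bigr| \leq \sum_{i > N} 2^{-i} < \epsilon.
\end{equation*}

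The argument requires no real obstacle; the only thing one needs to check carefully is the uniformity claim, namely that $N$ (and hence $k$) depends only on $\epsilon$ and not on $a$ or $\phi$. This is immediate because the enumeration $(A_i^K)$ of the basis and the weights $2^{-i}$ are fixed once and for all, so both the truncation level and the resulting atomic refinement are determined by $\epsilon$ alone.
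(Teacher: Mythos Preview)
Your proof is correct and follows essentially the same approach as the paper's: truncate the series at level $N$ depending only on $\epsilon$, take the atoms of the Boolean algebra generated by the finitely many clopen sets $A_{i,\gamma}$ appearing in the first $N$ cylinders, and collapse $\phi$ to a representative point in each atom so that the first $N$ terms agree exactly. The paper's argument is identical in structure and detail.
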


\begin{proof} Fix $\epsilon$. Choose $N$ large enough that $\sum_{i=N}^\infty \frac{1}{2^i} < \epsilon$. For each $i \leq N$, write $A_i = \pi_{F_i}^{-1} \left( \prod_{\gamma \in F_i} A^i_\gamma \right)$ for $A^i_\gamma \subseteq K$ clopen and $F_i \subseteq \Gamma$ finite. We have for all $a \in \mathrm{A}(\Gamma,X,\mu)$ and $\phi,\psi \in L(X,\mu,K)$, \begin{align} | \Phi^{\phi,a}(A_i) - \Phi^{\psi,a}(A_i)| & = \left \vert  \Phi^{\phi,a}\left ( \pi_{F_i}^{-1} \left( \prod_{\gamma \in F_i} A^i_\gamma \right) \right)- \Phi^{\psi,a}\left( \pi_{F_i}^{-1} \left( \prod_{\gamma \in F_i} A^i_\gamma \right) \right) \right \vert \nonumber \\ & = |\mu( \{x: \Phi^{\phi,a}(x)(\gamma) \in A^i_\gamma \mbox{ for all } \gamma \in F_i \} ) - \mu( \{x: \Phi^{\psi,a}(x)(\gamma) \in A^i_\gamma \mbox{ for all } \gamma \in F_i \} ) | \nonumber \\ & = | \mu(\{x: \phi((\gamma^{-1})^a x) \in A^i_\gamma \mbox{ for all } \gamma \in F_i\}) -\mu(\{x: \psi((\gamma^{-1})^a x) \in A^i_\gamma \mbox{ for all } \gamma \in F_i \})| \nonumber \\ & =\left \vert \mu \left ( \bigcap_{\gamma \in F_i} \gamma^a \phi^{-1}(A^i_\gamma) \right) - \left( \bigcap_{\gamma \in F_i} \gamma^a \psi^{-1}(A^i_\gamma) \right) \right \vert.  \end{align}

Now, fix $\phi \in L(X,\mu,K)$. Let $(B_j)_{j=1}^k$ be the finite partition of $K$ given by the atoms of the Boolean algebra generated by $(A^i_\gamma)_{i \leq N, \gamma \in F_i}$. Note that $k$ depends only on $\epsilon$. For each $j \leq k$, let $y_j$ be any point in $B_j$. Define a map $\psi: X \to K$ by letting $\psi(x) = y_j$ for the unique $j$ such that $x \in \phi^{-1}(B_j)$. Then $\psi^{-1}(B_j) = \phi^{-1}(B_j)$ for each $j$, and hence $\phi^{-1}(A^i_\gamma) = \psi^{-1}(A^i_\gamma)$ for each $i \leq N$ and $\gamma \in F_i$. Therefore the value of the expression $(1) $ is $0$ and $\delta_K( (\Phi^{\phi,a})_* \mu, (\Phi^{\psi,a})_* \mu ) < \epsilon.$ \end{proof}

\begin{lemma}\label{lem6} If $\overline{ E(a_n,L)} \to \overline{ E(a,L)}$ in $\mathcal{K}(M_s(L^\Gamma))$ for every finite set $L$ then $\overline{ E(a_n,K)} \to \overline{E(a,K)}$ in $\mathcal{K}(M_s(K^\Gamma))$. \end{lemma}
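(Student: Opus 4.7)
The plan is to use Lemma \ref{lem5} to reduce arbitrary $K$-valued maps to maps with range in a fixed finite subset $L \subseteq K$, and then invoke the hypothesis for that particular $L$.

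First, I would strengthen Lemma \ref{lem5} slightly: inspecting its proof, the partition $(B_j)_{j=1}^k$ of $K$ depends only on $\epsilon$, so the points $y_j \in B_j$ can be chosen once and for all. Thus for each $\epsilon > 0$ there is a fixed finite set $L = L_\epsilon \subseteq K$ such that for every $a \in \mathrm{A}(\Gamma,X,\mu)$ and every $\phi \in L(X,\mu,K)$ one obtains $\psi \in L(X,\mu,L)$ with $\delta_K((\Phi^{\phi,a})_*\mu,(\Phi^{\psi,a})_*\mu) < \epsilon$. In particular every element of $E(a,K)$ lies within $\delta_K$-distance $\epsilon$ of some element of $E(a,L)$, where $L$ is viewed inside $K$.

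Second, for a fixed finite $L \subseteq K$, the closed inclusion $L^\Gamma \hookrightarrow K^\Gamma$ induces a map $\iota\colon M_s(L^\Gamma) \to M_s(K^\Gamma)$ by pushforward; $\iota$ is continuous (continuous functions on $K^\Gamma$ restrict continuously to $L^\Gamma$), injective, and since $M_s(L^\Gamma)$ is compact it is a topological embedding, hence uniformly continuous. Under $\iota$, $E(a,L)$ is identified with the subset of $E(a,K)$ consisting of measures supported on $L^\Gamma$, and the induced map $\mathcal{K}(M_s(L^\Gamma)) \to \mathcal{K}(M_s(K^\Gamma))$ is continuous for the Hausdorff topology.

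Third, I would conclude via the triangle inequality for the Hausdorff distance $d_H$ in $\mathcal{K}(M_s(K^\Gamma))$: fix $\epsilon > 0$ and take $L = L_\epsilon$. Then
\[
d_H\bigl(\overline{E(a_n,K)}, \overline{E(a,K)}\bigr) \leq d_H\bigl(\overline{E(a_n,K)}, \iota(\overline{E(a_n,L)})\bigr) + d_H\bigl(\iota(\overline{E(a_n,L)}), \iota(\overline{E(a,L)})\bigr) + d_H\bigl(\iota(\overline{E(a,L)}), \overline{E(a,K)}\bigr).
\]
The first and third terms are bounded by $\epsilon$ by the strengthened Lemma \ref{lem5} together with the inclusion $\iota(E(a,L)) \subseteq E(a,K)$. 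The middle term tends to $0$ as $n \to \infty$ by the hypothesis for $L$ combined with the second step. Hence $\limsup_n d_H(\overline{E(a_n,K)}, \overline{E(a,K)}) \leq 2\epsilon$, and letting $\epsilon \to 0$ gives the desired Hausdorff convergence. The main obstacle is the refinement of Lemma \ref{lem5} to a fixed finite target set $L_\epsilon$, which must be extracted from the proof rather than the statement; the rest is routine compactness and uniform continuity.
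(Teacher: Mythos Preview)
Your proposal is correct and follows essentially the same strategy as the paper: use Lemma~\ref{lem5} to approximate $E(b,K)$ uniformly in $b$ by measures coming from finite-valued maps, then invoke the hypothesis on the finite target and conclude by the triangle inequality for the Hausdorff distance. The only difference is that you extract a single fixed $L_\epsilon \subseteq K$ from the proof of Lemma~\ref{lem5}, whereas the paper instead works with $E_k(b,K) = \bigcup_{|L|=k} E(b,L)$ and uses that all $L$ of a given cardinality give isomorphic spaces $M_s(L^\Gamma)$; your version is a harmless simplification, and your use of uniform continuity of the embedding $\iota$ is a cleaner substitute for the paper's (not entirely justified) identification of the metrics $d_{\mathcal{K}}$ and $d_{\mathcal{K}_L}$.
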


\begin{proof} Fix $\epsilon > 0$ in order to show that eventually $d_{\mathcal{K}}\left(\overline{E(a_n,K)},\overline{E(a,K)} \right) < \epsilon$, where $d_{\mathcal{K}}$ is the Hausdorff distance in $\mathcal{K}(M_s(K^\Gamma))$ constructed from $\delta_K$. For $k \in \mathbb{N}$ and $b \in \mathrm{A}(\Gamma,X,\mu)$ let \[E_k(b,K) = \{ (\Phi^{\phi,a})_* \mu: \phi: X \to K \mbox{ is measurable and the range of } \phi \mbox{ has size } \leq k \}. \] By Lemma \ref{lem5} we can choose $k \in \mathbb{N}$ such that $E(b,K) \subseteq B_{\frac{\epsilon}{4}}(E_k(b,K))$ for every $b \in \mathrm{A}(\Gamma,X,\mu)$ where $B_r(A) = \{ \nu \in M_s(K^\Gamma): \delta_K (\nu,\rho) < r$ for some $\rho \in A\}$. Notice that $E_k(b,K) = \bigcup_{\substack{L \subseteq K,\\ |L| = k}} E(b,L)$. Fix a set $L$ of size $k$ and choose $N$ large enough such that if $n\geq N$ then $d_{\mathcal{K}_L}\left(\overline{E(a_n,L)},\overline{E(a,L)} \right) < \frac{\epsilon}{4}$ where $d_{\mathcal{K}_L}$ is the Hausdorff distance in $\mathcal{K}(M_s(L^\Gamma))$. Since the construction is independent of the set chosen to realize $L$, we have in fact $d_{\mathcal{K}_L}\left(\overline{E(a_n,L)},\overline{E(a,L)} \right) < \frac{\epsilon}{4}$ for every finite set $L$ of size $k$. For a fixed finite $L \subseteq K$ let $E_L(b,K) =  \{ (\Phi^{b,\phi})_* \mu: \phi: X \to K \mbox{ measurable, } \phi(X) \subseteq L \} $. Then for any $b,c \in \mathrm{A}(\Gamma,X,\mu)$ we have \[d_\mathcal{K}\left(\overline{E_L(b,K)}, \overline{E_L(c,K)} \right) = d_{\mathcal{K}_L} \left( \overline{E(b,L)}, \overline{E(c,L)} \right), \] so that when $n\geq N$, \begin{align*} d_{\mathcal{K}}\left(\overline{E_k(a_n,K)}, \overline{E_k(a,K)} \right) &= d_{\mathcal{K}} \left( \bigcup_{\substack{L \subseteq K\\ |L| = k}} \overline{E(a_n,L)},\bigcup_{\substack{L \subseteq K \\ |L| = k}} \overline{E(a,L)} \right) \\ &\leq \sup_{\substack{L \subseteq K \\ |L| = k}} d_{\mathcal{K}_L} \left( \overline{E(a_n,L)}, \overline{E(a,L)} \right) < \frac{\epsilon}{4}. \end{align*} Therefore when $n \geq N$, \begin{align*} d_{\mathcal{K}}\left(\overline{E(a_n,K)},\overline{E(a,K)}\right)  &\leq d_{\mathcal{K}}\left(\overline{E(a_n,K)}, \overline{E_k(a_n,K)}\right) + d_{\mathcal{K}}\left( \overline{E_k(a_n,K)}, \overline{E_k(a,K)}\right) \\ & \hspace{1 in}+ d_{\mathcal{K}}\left(\overline{E_k(a,K)}, \overline{E(a,K)}\right) \\ &< \frac{3 \epsilon}{4}. \end{align*}

 \end{proof}

\begin{lemma} \label{lem7} Let $L$ be a finite set of size $k$. Then for each finite set $(A_p)_{p=1}^q$ of basic clopen sets $A_p \subseteq L^\Gamma$ and $\epsilon > 0$ there is $\delta > 0$ such that if $d(a,b) < \delta$ then for all $\phi \in L(X,\mu,L)$ there exists $\psi \in L(X,\mu,L)$ such that $|(\Phi_L^{a,\phi})_* \mu(A_p) - (\Phi_L^{b,\psi})_* \mu(A_p) | < \epsilon$ for all $p \leq q$. \end{lemma}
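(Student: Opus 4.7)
The principal difficulty is that the metric $d$ controls only \emph{pairwise} intersection data $\mu(\gamma^aA_q\cap A_r)$, whereas $(\Phi_L^{a,\phi})_*\mu(A_p)$ is an $|F_p|$-fold intersection, where $F_p$ indexes the coordinates defining the basic clopen set $A_p=\pi_{F_p}^{-1}(\prod_{\gamma\in F_p}A^p_\gamma)$ with $A^p_\gamma\subseteq L$. My plan is to encode the higher intersections as atomic measures of a sufficiently refined partition of $X$ while expressing the compatibility with $\phi$ in purely pairwise terms. Set $F=\{e\}\cup\bigcup_p F_p$, $K=k^{|F|}$, and pick $n_0$ so that $F\subseteq\{\gamma_1,\ldots,\gamma_{n_0}\}$. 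For any $\phi\in L(X,\mu,L)$ introduce the refined partition $\mathcal{B}_a=(B^a_h)_{h\in L^F}$ given by $B^a_h=\bigcap_{\gamma\in F}\gamma^a\phi^{-1}(h(\gamma))$, so that
\[ (\Phi_L^{a,\phi})_*\mu(A_p) \;=\; \sum_{\substack{h\in L^F\\ h(\gamma)\in A^p_\gamma\,\forall\gamma\in F_p}}\mu(B^a_h). \]

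The key observation is that $\mathcal{B}_a$ is \emph{$a$-consistent} with $\phi$ in the pairwise sense that $\mu(\gamma^aB^a_f\cap B^a_g)=0$ whenever $f(e)\ne g(\gamma)$, because $\gamma^aB^a_f\subseteq\gamma^a\phi^{-1}(f(e))$ while $B^a_g\subseteq\gamma^a\phi^{-1}(g(\gamma))$ and $\gamma^a$ is a bijection. Now $d(a,b)<\delta$ gives $d_H(C_{n_0,K}(a),C_{n_0,K}(b))<2^{n_0+K}\delta$, so approximating $M^{\mathcal{B}_a}_{n_0,K}(a)\in C_{n_0,K}(a)$ by points of $C_{n_0,K}(b)$ (which are limits of $M^\mathcal{B}_{n_0,K}(b)$ over partitions $\mathcal{B}$) produces a partition $\mathcal{B}'=(B'_h)_{h\in L^F}$ of $X$ all of whose pairwise entries $\mu(\gamma^bB'_f\cap B'_g)$ lie within $\eta:=c_1\delta$ (with $c_1$ depending only on $n_0$ and $K$) of the corresponding $\mu(\gamma^aB^a_f\cap B^a_g)$. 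In particular $|\mu(B'_h)-\mu(B^a_h)|<\eta$, and $\mathcal{B}'$ is \emph{approximately $b$-consistent}: $\mu(\gamma^bB'_f\cap B'_g)<\eta$ whenever $f(e)\ne g(\gamma)$.

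Define $\psi\colon X\to L$ by $\psi^{-1}(\ell)=\bigsqcup_{f:f(e)=\ell}B'_f$, and let $B^b_h=\bigcap_{\gamma\in F}\gamma^b\psi^{-1}(h(\gamma))$. Approximate $b$-consistency gives, for each $\gamma\in F$ and $\ell\in L$,
\[ \mu\!\left(\gamma^b\psi^{-1}(\ell)\mathbin{\triangle}\bigsqcup_{f:f(\gamma)=\ell}B'_f\right) \;=\; \sum_{\substack{f(e)=\ell\\ g(\gamma)\ne\ell}}\mu(\gamma^bB'_f\cap B'_g) \;+\; \sum_{\substack{g(\gamma)=\ell\\ f(e)\ne\ell}}\mu(\gamma^bB'_f\cap B'_g) \;\le\; 2K^2\eta. \]
Because $\bigcap_{\gamma\in F}\bigsqcup_{f:f(\gamma)=h(\gamma)}B'_f=B'_h$ (as $\mathcal{B}'$ is a partition), the general inequality $\mu(\bigcap_\gamma Z_\gamma\triangle\bigcap_\gamma W_\gamma)\le\sum_\gamma\mu(Z_\gamma\triangle W_\gamma)$ yields $\mu(B^b_h\triangle B'_h)\le 2|F|K^2\eta$, and therefore $|\mu(B^b_h)-\mu(B^a_h)|\le(2|F|K^2+1)\eta$. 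Summing the displayed expansion over the at most $K$ relevant indices $h$ bounds $|(\Phi_L^{b,\psi})_*\mu(A_p)-(\Phi_L^{a,\phi})_*\mu(A_p)|$ by a constant multiple of $\delta$ uniformly in $p$, and taking $\delta$ correspondingly small finishes. The only nontrivial move is the encoding step in the first two paragraphs; everything after is routine triangle-inequality bookkeeping.
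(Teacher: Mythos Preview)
Your proof is correct and follows essentially the same strategy as the paper's: refine $\phi$ to the $F$-name partition $B^a_h=\bigcap_{\gamma\in F}\gamma^a\phi^{-1}(h(\gamma))$, use the $d$-closeness at level $(n_0,K)$ to transfer this partition to the $b$-side, define $\psi$ by reading off the $e$-coordinate of the index, and then bound symmetric differences to recover the $|F|$-fold intersection data from the pairwise data.

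The organization differs slightly. The paper takes $F$ large enough that $(F_p)^2\subseteq F$ and phrases the key estimate as an approximate shift-equivariance $\mu\bigl((\gamma^bD_\sigma)\triangle D_{\gamma\cdot\sigma}\bigr)$ small for $\sigma\in k^{F_p}$; it then rewrites $D_{\jmath_p}$ as an intersection over $\gamma\in F_p$ of unions of shifted $D_\sigma$'s. You instead take the minimal $F=\{e\}\cup\bigcup_pF_p$, isolate the pairwise \emph{consistency} condition $\mu(\gamma^aB^a_f\cap B^a_g)=0$ for $f(e)\neq g(\gamma)$, transfer it approximately, and use it to show directly that $B^b_h\approx B'_h$ via the identity $\bigcap_{\gamma\in F}\bigsqcup_{f:f(\gamma)=h(\gamma)}B'_f=B'_h$. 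Your route avoids the enlargement to $(F_p)^2$ and is arguably a bit cleaner, but the two arguments are doing the same work and yield the same dependence of $\delta$ on the data.
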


\begin{proof}

Write $A_p = \bigcap_{\gamma \in F_p} \pi_\gamma^{-1}(\jmath_p(\gamma))$ for some $F_p \subseteq \Gamma$ finite, $\jmath: F_p \to k$ and fix $\epsilon > 0$. Choose a finite $F \subseteq \Gamma$ with $(F_p)^2 \subseteq F$ for all $p \leq q$. We may assume the identity $e \in F$. Suppose $d(a,b) < \frac{\delta}{2^{|F| + k^{|F|}}}$; we will specify a value for $\delta$ later. Now fix $\phi:X \to k$ and let $B_i = \phi^{-1}(i)$. Given $\eta: F \to k$, let $B_\eta = \bigcap_{\gamma \in F} \gamma^{a} B_{\eta(\gamma)}$. We can then find a partition $\{D_\eta\}_{\eta \in k^F}$ such that \[ | \mu( \gamma^{a}B_{\eta_1} \cap B_{\eta_2}) - \mu( \gamma^b D_{\eta_1} \cap D_{\eta_2})| < \delta \] for all $\eta_1,\eta_2 \in k^F$ and $\gamma \in F$. Define $\psi: X \to k$, by $\psi(y) = l$ if $y \in D_{\eta}$ for some $\eta$ with $\eta(e) = l$. Furthermore, for each $l \leq k$ let $D_l = \bigsqcup \{D_\eta: \eta \in k^F$ and $\eta(e) = l \} = \psi^{-1}(l)$. For each $J \subseteq F$ and $\sigma \in k^J$ let $D_\sigma = \bigsqcup \{ D_\eta: \eta \in k^F$ and $\sigma \sqsubseteq \eta \}$, where $\sigma \sqsubseteq \eta$ means $\eta$ extends $\sigma$ and let $\tilde{D}_\sigma = \bigcap_{\gamma \in J} \gamma^b D_{\sigma(\gamma)}$. Furthermore if $\gamma \in \Gamma$, $J \subseteq \Gamma$ and $\sigma \in k^J$ let $\gamma \cdot \sigma \in k^{\gamma J}$ be given by $(\gamma \cdot \sigma)(\delta) = \sigma( \gamma^{-1} \delta)$. For $\sigma \in K^{F_p}$ and $\gamma \in F_p$ we have \begin{align*} | \mu(\gamma^b D_{\sigma} \cap D_{\gamma \cdot \sigma}) - \mu(\gamma^a B_{\sigma} \cap B_{\gamma \cdot \sigma})| & \leq \sum_{(\eta \in k^{F}: \sigma \sqsubseteq \eta)} \sum_{(\eta' \in k^{F}: \gamma \cdot \sigma \sqsubseteq \eta')} |\mu( \gamma^b D_\eta \cap D_{\eta'}) - \mu( \gamma^a B_{\eta} \cap B_{\eta'})| \\  & \leq \delta (k^{|F|})^2 \end{align*}

In particular, setting $\gamma = e$ we see  $|\mu(B_\sigma) - \mu(D_\sigma)| < \delta k^{2 |F|}$ for every $\sigma: F_p \to k$. Since $\gamma^a B_\sigma = B_{\gamma \cdot \sigma} = \gamma^a B_\sigma \cap B_{\gamma \cdot \sigma}$ we have \begin{align*} |\mu(D_\sigma) - \mu( \gamma^b D_\sigma \cap D_{\gamma \cdot \sigma})| & \leq | \mu(D_\sigma) - \mu(\gamma^a B_\sigma)| + |\mu(\gamma^a B_\sigma \cap B_{\gamma \cdot \sigma}) - \mu(\gamma^b D_\sigma \cap D_{\gamma \cdot \sigma})| \\ & = | \mu(D_\sigma) - \mu(B_\sigma)| + |\mu(\gamma^a B_\sigma \cap B_{\gamma \cdot \sigma}) - \mu(\gamma^b D_\sigma \cap D_{\gamma \cdot \sigma})|   \\ &< 2 \delta k^{2 |F|} \end{align*} and also \begin{align*} |\mu(D_{\gamma \cdot \sigma}) - \mu( \gamma^b D_\sigma \cap D_{\gamma \cdot \sigma})| & \leq | \mu(D_{\gamma \cdot \sigma}) - \mu(B_{\gamma \cdot \sigma})| + |\mu(\gamma^a B_\sigma \cap B_{\gamma \cdot \sigma}) - \mu(\gamma^b D_\sigma \cap D_{\gamma \cdot \sigma})| \\ &< 2 \delta k^{2 |F|} \end{align*} Therefore \begin{align} \mu( (\gamma^b D_\sigma) \triangle (D_{\gamma \cdot \sigma})) & = \mu( \gamma^b D_\sigma) + \mu(D_{\gamma \cdot \sigma}) - 2 \mu( \gamma^b D_\sigma \cap D_{\gamma \cdot \sigma})  \nonumber \\ & \leq |\mu(D_{\gamma \cdot \sigma}) - \mu( \gamma^b D_\sigma \cap D_{\gamma \cdot \sigma})| +  |\mu(D_{\gamma \cdot \sigma}) - \mu( \gamma^b D_\sigma \cap D_{\gamma \cdot \sigma})| \nonumber \\ & < 4 \delta k^{2 |F|}   \end{align}

Since $(D_\eta)_{\eta \in k^F}$ is a partition of $X$ and $(F_p)^2 \subseteq F$ we have

\[ D_{\jmath_p} = \bigsqcup_{\substack{ \eta \in k^F \\ \jmath_p \sqsubseteq \eta }} D_\eta = \bigcap_{\gamma \in F_p} \bigsqcup_{\substack{\sigma \in k^{\gamma F_p}\\ \sigma(\gamma) = \jmath_p(\gamma)}} D_\sigma = \bigcap_{\gamma \in F_p} \bigsqcup_{\substack{\sigma \in k^{F_p} \\ \sigma(e) = \jmath_p(\gamma)}} D_{\gamma \cdot \sigma}.\]

Now, by $(2)$, \begin{equation} \mu \left( \left( \bigcap_{\gamma \in F_p} \bigsqcup_{\substack{\sigma \in k^{F_p} \\ \sigma(e) = \jmath(\gamma)}} D_{\gamma \cdot \sigma} \right) \triangle \left( \bigcap_{\gamma \in F_p} \bigsqcup_{\substack{\sigma \in k^{F_p} \\ \sigma(e) = \jmath_p(\gamma)}} \gamma^b D_\sigma \right) \right) < (|F_p| k^{|F_p|}) ( 4 \delta k^{2 |F|}). \end{equation}
Note that $ \bigcap_{\gamma \in F_p} \bigsqcup_{\substack{\sigma \in k^{F_p} \\ \sigma(e) = \jmath_p(\gamma)}} \gamma^b D_\sigma = \bigcap_{\gamma \in F_p} \gamma^b D_{\jmath_p(\gamma)} = \tilde{D}_{\jmath_p}$, so $(3)$ reads $|\mu(D_{\jmath_p}) - \mu(\tilde{D}_{\jmath_p})| < (|F_p| k^{|F_p|}) ( 4 \delta k^{2 |F|})$.

Moreover, \begin{align*} (\Phi^{b,\psi}_L)_* \mu(A_p) & = \mu(\{x: \Phi^{b,\psi}_L(x) \in A_p \}) \\ & = \mu(\{x: \Phi^{b,\psi}_L(x)(\gamma) = \jmath_p(\gamma) \mbox{ for all } \gamma \in F_p \}) \\ & =  \mu(\{x: \psi((\gamma^{-1})^b x) = \jmath_p(\gamma) \mbox{ for all } \gamma \in F_p \}) \\ & =  \mu(\{x:  x \in \gamma^b \psi^{-1}( \jmath_p(\gamma)) \mbox{ for all } \gamma \in F_p \}) \\ & = \mu \left( \bigcap_{\gamma \in F_p} \gamma^b D_{\jmath_p(\gamma)} \right) \\ & = \mu(\tilde{D}_{\jmath_p}). \end{align*}

Similarly, $(\Phi^{a,\phi}_L)_*\mu(A_p) = \mu(B_{\jmath_p})$. So we finally have \begin{align*} | (\Phi^{b,\psi}_L)_* \mu(A_p) - (\Phi^{a,\phi}_L)_*\mu(A_p) |  & = |\mu(\tilde{D}_{\jmath_p}) - \mu(B_{\jmath_p})| \\ & \leq |\mu(\tilde{D}_{\jmath_p}) - \mu(D_{\jmath_p})| + |\mu(D_{\jmath_p}) - \mu(B_{\jmath_p})| \\ &< (|F_p| k^{|F_p|}) ( 4 \delta k^{2 |F|}) +  2 \delta k^{2 |F|}.\end{align*}

Since $k$ is fixed in advance, $|F_p| \leq |F|$ and $F$ depends only on $(A_p)_{p=1}^q$, it is clear that $\delta$ can be chosen so $(|F_p| k^{|F_p|}) ( 4 \delta k^{2 |F|}) +  2 \delta k^{2 |F|} < \epsilon$ for all $p \leq q$. \end{proof}

We can now prove the main result of this section. 

\begin{theorem} $\tau_1 = \tau_2$. \end{theorem}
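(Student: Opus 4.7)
The strategy is to prove the identity map $\mathrm{id}\colon (\mathrm{A}_\sim(\Gamma,X,\mu),\tau_1) \to (\mathrm{A}_\sim(\Gamma,X,\mu),\tau_2)$ is continuous, and then invoke the standard fact that a continuous bijection from a compact space to a Hausdorff space is a homeomorphism. The $\tau_2$-Hausdorff-ness is automatic from Proposition 3.5 of \cite{RTD} (the map $\Phi$ separates weak equivalence classes), and $\tau_1$-compactness is already available (this is essentially the result of \cite{AbEl}). So the whole theorem reduces to a single continuity check, which is then upgraded for free to a homeomorphism.

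Both topologies are metrizable, so I will verify continuity via sequences: suppose $d(a_n,a)\to 0$ and show $\Phi(a_n)\to\Phi(a)$ in $\mathcal{K}_K$. By Lemma \ref{lem6}, it suffices to prove that $\overline{E(a_n,L)}\to\overline{E(a,L)}$ in $\mathcal{K}(M_s(L^\Gamma))$ for every finite set $L$, with respect to the Hausdorff distance $d_{\mathcal{K}_L}$ coming from a metric $\delta_L$ on $M_s(L^\Gamma)$ defined in complete analogy with $\delta_K$, namely a weighted sum $\sum_i 2^{-i}|\nu(A_i^L)-\rho(A_i^L)|$ over a fixed enumeration $(A_i^L)_{i=1}^\infty$ of the basic clopen sets of $L^\Gamma$.

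Fix finite $L$ and $\epsilon>0$. Choose $N$ so that $\sum_{i>N}2^{-i}<\epsilon/2$, which forces the $\delta_L$-distance between any two measures to be within $\epsilon/2$ of the truncated sum over $A_1^L,\dots,A_N^L$. Apply Lemma \ref{lem7} with this finite collection of basic clopen sets and with tolerance $\epsilon/2$ to obtain $\delta>0$ with the following property: whenever $d(a,b)<\delta$ and $\phi\in L(X,\mu,L)$, there is $\psi\in L(X,\mu,L)$ with $|(\Phi_L^{a,\phi})_*\mu(A_p)-(\Phi_L^{b,\psi})_*\mu(A_p)|<\epsilon/2$ for all $p\leq N$, and hence $\delta_L((\Phi_L^{a,\phi})_*\mu,(\Phi_L^{b,\psi})_*\mu)<\epsilon$. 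For $n$ large enough that $d(a_n,a)<\delta$, one application of this bound with $(a,b)=(a,a_n)$ shows that every point of $\overline{E(a,L)}$ lies within $\epsilon$ of $\overline{E(a_n,L)}$, and the symmetric application with $(a,b)=(a_n,a)$ (valid because $d$ is a metric) gives the reverse inclusion. Therefore $d_{\mathcal{K}_L}(\overline{E(a_n,L)},\overline{E(a,L)})\leq\epsilon$, and since $\epsilon$ was arbitrary, $\overline{E(a_n,L)}\to\overline{E(a,L)}$. Now Lemma \ref{lem6} delivers $\Phi(a_n)\to\Phi(a)$, completing the continuity argument, and the compact-to-Hausdorff principle finishes the proof.

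The main obstacle has already been absorbed into Lemma \ref{lem7}, which is the genuine technical work; everything that remains is bookkeeping around how the Hausdorff metric on $\mathcal{K}_L$ interacts with a truncation of the weighted series defining $\delta_L$, plus the crucial observation that the hypothesis of Lemma \ref{lem7} is symmetric in $a$ and $b$ so that both sides of the Hausdorff estimate come out of a single invocation (used twice with roles swapped). The only care needed is the ordering of quantifiers: one must first choose the truncation $N$ (depending on $\epsilon$ and $L$), then use Lemma \ref{lem7} to obtain $\delta$ in terms of $N$, $\epsilon$, and the fixed basic clopen sets, and finally choose $n$ large.
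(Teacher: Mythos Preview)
Your forward direction ($\tau_1$-convergence implies $\tau_2$-convergence) matches the paper's argument line for line: reduce to finite $L$ via Lemma~\ref{lem6}, truncate the series defining $\delta_L$, feed the surviving basic clopen sets into Lemma~\ref{lem7}, and exploit the symmetry of $d$ to cover both halves of the Hausdorff estimate.

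Where you diverge is the reverse direction. The paper argues it directly: given $\Phi(a_n)\to\Phi(a)$, it fixes $r,q,\epsilon$, picks $q$ disjoint clopen sets $D_1,\dots,D_q\subseteq K$, and uses the dictionary between $q$-partitions of $X$ and $K$-valued maps, namely $\phi\leftrightarrow(\phi^{-1}(D_p))_p$, to convert $\delta_K$-closeness of pushforward measures on the cylinder sets $\pi_{\gamma_s}^{-1}(D_p)\cap\pi_e^{-1}(D_t)$ into closeness of the intersection numbers $\mu(\gamma_s B_p\cap B_t)$. This is short and entirely self-contained.

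Your compact-to-Hausdorff shortcut needs $(\mathrm{A}_\sim(\Gamma,X,\mu),\tau_1)$ to be compact, and the justification ``essentially the result of \cite{AbEl}'' is where care is required. The paper presents $\tau_1$ as a \emph{third} topology, merely ``implicit'' in \cite{AbEl} and a priori distinct from the original Ab\'ert--Elek topology; what is actually available from \cite{AbEl} together with \cite{RTD} is compactness of $\tau_2$. Within the logic of the paper, compactness of $\tau_1$ is therefore a \emph{consequence} of the theorem being proved, not an input to it. To make your route non-circular you must either verify separately that the metric $d$ reproduces the Ab\'ert--Elek topology on the nose, or show directly that the image of $a\mapsto(C_{n,k}(a))_{n,k}$ is closed in the compact product $\prod_{n,k}\mathcal{K}([0,1]^{n\times k\times k})$. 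Neither step is free, and once filled in the savings over the paper's direct reverse argument largely disappear.
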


\begin{proof} Suppose that $a_n \to a$ in $\tau_1$. We need to prove $\Phi(a_n) \to \Phi(a)$ in $\mathcal{K}(M_s(K^\Gamma))$. By Lemma \ref{lem6} it suffices to fix a finite set $L$ and show $\overline{E(a_n,L)} \to \overline{E(a,L)}$ in $\mathcal{K}(M_s(L^\Gamma))$. Let $k = |L|$. Write $E_n = E(a_n,L)$ and $E = E(a,L)$. As before, if we let $\mathcal{A}_L = (A^L_i)_{i=1}^\infty$ be the collection of clopen subsets of $L^\Gamma$ of the form $\bigcap_{\gamma \in F} \pi_\gamma^{-1}(j_\gamma)$ for a finite $F \subseteq \Gamma$ and $j_\gamma \leq k$, then \[ \delta_L(\nu,\rho) = \sum_{i=1}^\infty \frac{1}{2^i} |\nu(A^L_i) - \rho(A^L_i)| \] is a compatible metric on $M_s(L^\Gamma)$. Fix $\epsilon > 0$ in order to show that eventually $d_L(\overline{E_n},\overline{E}) < \epsilon$, where $d_L$ is the Hausdorff distance in $\mathcal{K}(M_s(L^\Gamma))$ constructed from $\delta_L$. Choose $N$ sufficiently large that $\sum_{i=N}^\infty \frac{1}{2^i} < \frac{\epsilon}{2}$. By Lemma \ref{lem7} there is $\delta > 0$ such that if $d(a,b) < \delta$ then for each $i \leq N$ and all $\phi \in L(X,\mu,L)$ there exists $\psi \in L(X,\mu,L)$ such that $|(\Phi_L^{a,\phi})_* \mu(A^L_i) - (\Phi_L^{b,\psi})_* \mu(A^L_i) | < \frac{\epsilon}{2}$. Thus if $M$ is large enough that $d(a_n,a) < \delta$ for $n \geq M$, we have $d_L( \overline{E_n}, \overline{E}) < \epsilon$.\\
\\
Now suppose $\Phi(a_n) \to \Phi(a)$ in $\mathcal{K}(M_s(K^\Gamma))$. Fix $r,q$ and $\epsilon > 0$ in order to show that eventually $d_H(C_{r,q}(a_n),C_{r,q}(a)) < \epsilon$. Choose $q$ distinct points $(x_p)_{p=1}^q \in K$ and let $(D_p)_{p=1}^q$ be a family of disjoint clopen subsets of $K$ with $x_p \in D_p$. Now let $M$ be large enough that all sets of the form $\pi_{\gamma_s}^{-1}(D_p) \cap \pi_e^{-1}(D_t)$ for $s \leq r$ and $p,t \leq q$ appear as some $A^K_i$ for $i \leq M$ in our previously chosen clopen basis $\mathcal{A}_K$. Then choose $N$ large enough that when $n \geq N$, $d_{\mathcal{K}}( \Phi(a_n), \Phi(a) ) < \frac{\epsilon}{2^{M}}$. Then for each $\phi \in L(X,\mu,K)$ we have $\psi \in L(X,\mu,K)$ such that $\delta_K((\Phi^{a_n,\phi})_* \mu, (\Phi^{a,\psi})_* \mu) <  \frac{\epsilon}{2^{M}}$. So in particular,  if $n \geq N$ then for each $\phi \in L(X,\mu,K)$ there exists $\psi \in L(X,\mu,K)$ such that \[ | (\Phi^{a_n,\phi})_* \mu(\pi_{\gamma_s}^{-1}(D_p) \cap \pi_e^{-1}(D_t)) - (\Phi^{a,\psi})_* \mu(\pi_{\gamma_s}^{-1}(D_p) \cap \pi_e^{-1}(D_t))| < \epsilon \] for all $p,t \leq q$ and $s \leq r$.\\
\\
Now suppose $n \geq N$ and let $(B_p)_{p=1}^q$ be a partition of $X$. Define $\phi: X \to K$ by taking $\phi(x) = x_p$ for the unique $p \leq q$ with $x \in B_p$ so by the previous paragraph we have a corresponding $\psi$. Observe that for all $\gamma \in \Gamma$ we have \begin{align*} \mu( \gamma^{a_n} B_p \cap B_t) &= \mu( \gamma^{a_n} \phi^{-1}(D_p) \cap \phi^{-1}(D_t) ) \\ & = \mu(\{x: \phi((\gamma^{a_n})^{-1} x) \in D_p \mbox{ and } \phi(x) \in D_t \}) \\ & = \mu( \{ x: \Phi^{\phi,a_n}(x)(\gamma) \in D_p \mbox{ and } \Phi^{\phi,a_n}(x)(e) \in D_t \}) \\ & = \mu( \{ x: \Phi^{\phi,a_n}(x) \in \pi_{\gamma}^{-1}(D_p) \mbox{ and } \Phi^{\phi,a_n}(x) \in \pi_e^{-1}(D_t) \}) \\ & = \mu( \{x : \Phi^{\phi,a_n}(x) \in \pi_{\gamma}^{-1}(D_p) \cap \pi_1^{-1}(D_t) \} ) \\ & = (\Phi^{\phi,a_n})_* \mu( \pi_{\gamma}^{-1}(D_p) \cap \pi_1^{-1}(D_t) ). \end{align*} Similarly letting $H_p = \psi^{-1}(D_p)$ we have $\mu( \gamma^a H_p \cap H_t) = (\Phi^{\psi,a_n})_* \mu( \pi_{\gamma}^{-1}(D_p) \cap \pi_1^{-1}(D_t) )$. Thus for all $p,t \leq q$ and $s \leq r$,

\[ |\mu( \gamma_s^{a_n} B_p \cap B_t ) - \mu(\gamma_s^a H_p \cap H_t)| = |(\Phi^{\phi,a_n})_* \mu( \pi_{\gamma+s}^{-1}(D_p) \cap \pi_e^{-1}(D_t) ) - (\Phi^{\psi,a_n})_* \mu( \pi_{\gamma_s}^{-1}(D_p) \cap \pi_e^{-1}(D_t) )| < \epsilon.\] We have shown that when $n \geq N$, $C_{r.q}(a_n) \subseteq B_{\epsilon}(C_{r,q}(a))$. The argument that eventually $C_{r,q}(a) \subseteq B_\epsilon( C_{r,q}(a_n))$ is identical. \end{proof}

\subsection{Topology on the space of stable weak equivalence classes.}

Let $\mathrm{A}_{\sim_s}(\Gamma,X,\mu)$ be the space of stable weak equivalence classes and let $\iota$ be the trivial action of $\Gamma$ on an standard probability space. By Lemma 3.7 in \cite{RTD}, we have $a \prec_s b$ if and only if $a \prec \iota \times b$. Moreover, Theorem 1.1 in \cite{RTD} says that $\overline{ E (a \times \iota,K)} = \cch(E(a,K))$, where $M_s(K^\Gamma)$ carries its natural topological convex structure as a compact convex subset of a Banach space.  Letting $\Psi:\mathrm{A}(\Gamma,X,\mu) \to \mathcal{K}(M_s(K^\Gamma))$ be the map $a \mapsto \cch(E(a,K)) $ we have $\Psi(a) = \Psi(b)$ if and only if $a \sim_s b$. Tucker-Drob gives $\mathrm{A}_{\sim_s}(\Gamma,X,\mu)$ the initial topology induced by $\Psi$, in which it is a compact Polish space. Thus we have $a_n \to a$ in the topology of $\mathrm{A}_{\sim_s}(\Gamma,X,\mu)$ if and only if $a_n \times \iota \to a \times \iota$ in the topology of $\mathrm{A}_\sim(\Gamma,X,\mu)$. Therefore we can introduce a metric $d_s$ on $\mathrm{A}_{\sim_s}(\Gamma,X,\mu)$ by setting $d_s(a,b) = d(a \times \iota,b \times \iota)$.

\section{$\mathrm{A}_\sim(\Gamma,X,\mu)$ as a weak convex space.}

We now describe how to give $\mathrm{A}_\sim(\Gamma,X,\mu)$ the structure of a weak convex space. Given $t \in [0,1]$ and $a,b \in \mathrm{A}_\sim(\Gamma,X,\mu)$ we let $c \in A\left(\Gamma,X_1 \sqcup X_2,t \mu_1 + (1-t) \mu_2 \right)$ be the disjoint sum of representative actions $a$ and $b$ on the disjoint union of two copies $X_1$ and $X_2$ of $X$ with the first copy carrying a copy of the measure $\mu$ weighted by $t$ and the second copy carrying a copy of $\mu$ weighted by $(1-t)$. To get an action in $\mathrm{A}(\Gamma,X,\mu)$ we need to choose an isomorphism of $(X,\mu)$ with $\left(X_1 \sqcup X_2, t \mu_1 + (1-t) \mu_2 \right)$, but the weak equivalence class of $c$ does not depend on this or on the representatives we chose. So we have a well-defined binary operation $\mathrm{A}_\sim(\Gamma,X,\mu)^2 \to \mathrm{A}_\sim(\Gamma,X,\mu)$. Call this $cc_t$. It is clear that $\mathrm{(1)}$, $\mathrm{(3)}$ and $\mathrm{(4)}$ of Definition \ref{def1} are satisfied, so $\mathrm{A}_\sim(\Gamma,X,\mu)$ is a weak convex space. Moreover, we have the following.

\begin{proposition} $\mathrm{A}_\sim(\Gamma,X,\mu)$ is a topological weak convex space. \end{proposition}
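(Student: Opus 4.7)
I will work with the metric $d$ defining $\tau_1$, since the convex combination operation is more transparent on the level of the sets $C_{n,k}$. The key observation is the following explicit formula: if $c = cc_t(a,b)$ is the convex combination realized on $X_1 \sqcup X_2$ with measure $t\mu_1 + (1-t)\mu_2$, then a partition $\mathcal{C} = \{C_1,\dots,C_k\}$ of $X_1 \sqcup X_2$ decomposes canonically as $C_i = A_i \sqcup B_i$ with $\mathcal{A} = \{A_i\}$ a partition of $X_1$ and $\mathcal{B} = \{B_i\}$ a partition of $X_2$, and conversely every such pair arises this way. Since $(\gamma^c C_i) \cap C_j = (\gamma^a A_i \cap A_j) \sqcup (\gamma^b B_i \cap B_j)$, one computes directly that
\[ M^{\mathcal{C}}_{n,k}(c) = t\, M^{\mathcal{A}}_{n,k}(a) + (1-t)\, M^{\mathcal{B}}_{n,k}(b). \]
As $\mathcal{A}, \mathcal{B}$ range independently over partitions, this yields
\[ C_{n,k}(cc_t(a,b)) = t\, C_{n,k}(a) + (1-t)\, C_{n,k}(b), \]
where the right-hand side is a Minkowski combination of compact subsets of $[0,1]^{n\times k\times k}$ (the closure of the element-wise combination coincides with this combination of the closures because $C_{n,k}(a), C_{n,k}(b)$ are already compact).

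With this formula in hand, the continuity of $cc$ reduces to the standard fact that Minkowski sums and scalar multiples are jointly continuous on the hyperspace of compact subsets of a fixed Euclidean space under the Hausdorff metric. Concretely, if $t_n \to t$ in $[0,1]$, $a_n \to a$ and $b_n \to b$ in $(\mathrm{A}_\sim(\Gamma,X,\mu), d)$, then the definition of $d$ forces $d_H(C_{n,k}(a_m), C_{n,k}(a)) \to 0$ and similarly for $b_m$, for every fixed pair $(n,k)$. A straightforward estimate then shows
\[ d_H\bigl(t_m\, C_{n,k}(a_m) + (1-t_m)\, C_{n,k}(b_m),\ t\, C_{n,k}(a) + (1-t)\, C_{n,k}(b)\bigr) \longrightarrow 0 \]
for each $(n,k)$, using that the $C_{n,k}$'s lie in the bounded cube $[0,1]^{n\times k\times k}$ to control the $t_m - t$ term uniformly.

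Finally, to pass from pointwise (in $(n,k)$) convergence to convergence of $d(cc_{t_m}(a_m, b_m), cc_t(a,b))$, I use dominated convergence on the series $\sum_{n,k} 2^{-(n+k)} d_H(\cdot,\cdot)$: each summand is bounded by the diameter of $[0,1]^{n\times k\times k}$, and $2^{-(n+k)}$ times this diameter is summable, so the full series converges to $0$. This gives continuity of $cc$ and establishes the proposition.

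\textbf{Main obstacle.} There is no substantive obstacle; once one recognizes that partitions of $X_1 \sqcup X_2$ biject with pairs of partitions, everything reduces to Hausdorff continuity of Minkowski combinations of compact sets. The only care needed is the interchange of closure with the Minkowski combination and the dominated-convergence argument for the weighted series, both of which are routine.
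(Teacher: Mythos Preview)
Your proof is correct and rests on the same key observation as the paper's---that partitions of $X_1 \sqcup X_2$ correspond bijectively to pairs of partitions of $X_1$ and $X_2$, giving $M^{\mathcal{C}}_{n,k}(c) = t\, M^{\mathcal{A}}_{n,k}(a) + (1-t)\, M^{\mathcal{B}}_{n,k}(b)$. The paper carries out the resulting $\epsilon$-approximation by hand rather than isolating the Minkowski identity $C_{n,k}(cc_t(a,b)) = t\,C_{n,k}(a) + (1-t)\,C_{n,k}(b)$ and invoking Hausdorff continuity of Minkowski combinations, but the two arguments are the same in substance.
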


\begin{proof} We must show that $cc$ is continuous. Suppose that $t_j \to t$ in $[0,1]$ and $a_j \to a$ and $b_j \to b$ in the topology of $A_\sim(\Gamma,Y,\mu)$. Write $c_j =t_j a_j + (1-t_j) b_j $ and $c = t a + (1-t)b$.  Fixing $l,m \in \mathbb{N}$ write $C(d)$ for $C_{l,m}(d)$. We need to prove that for every $\epsilon >0$ there is $J$ so that if $j > J$ then we have $d_H(C(c_j),C(c)) < \epsilon$, where $d_H$ is the Hausdorff distance in $[0,1]^{l \times m^2}$.\\
\\
First we must show that for sufficiently large $j$, for every partition $B_1,\ldots,B_l$ of $Y$ there is a partition $D_1,\ldots,D_l$ of $Y$ depending on $j$ such that for all $s,t \leq l$ and $p \leq m$, \[ |\mu(\gamma_p^{c_j} D_s \cap D_t) - \mu(\gamma_p^{c} B_s \cap B_t)| < \epsilon. \]

Choose $J_1$ so that if $j > J_1$ then $ |t_j - t| < \frac{\epsilon}{6}$. Choose $J_2 > J_1$ so if $j > J_2$ then $d_H(C_{a_j},C_a) < \frac{\epsilon}{6}$ and $d_H(C_{b_j},C_b) < \frac{\epsilon}{6}$. Fix $j > J_2$. Writing $\theta$ for the isomorphism from $(Y_1 \sqcup Y_2, t \mu + (1-t) \mu)$ to $(Y,\mu)$ and $\theta_j$ for the isomorphism from $(Y_1 \sqcup Y_2, t_j \mu + (1-t_j) \mu)$ to $(Y,\mu)$ we have a partition $(B_{s,i})_{s=1}^l$ of $Y_i$ given by $B_{s,i} = \theta^{-1}(B_s) \cap Y_i$. So we can find a partition $(D_{s,i})_{s=1}^l$ of $Y_i$ such that for all $p \leq m$ and all $s,t \leq l$ we have \[ |\mu(\gamma_p^{a_j} D_{s,1} \cap D_{t,1}) - \mu(\gamma_p^{a} B_{s,1} \cap B_{t,1}) | < \frac{\epsilon}{6}\] and \[ |\mu(\gamma_p^{b_j} D_{s,2} \cap D_{t,2}) - \mu(\gamma_p^{b} B_{s,2} \cap B_{t,2}) | < \frac{\epsilon}{6}\] 

Now, let $D_s = \theta_j(D_{s,1} \sqcup D_{s,2})$. Note that since each $\theta_j(Y_i)$ is $c_j$ invariant, \begin{align*} \mu(\gamma_p^{c_j} D_s \cap D_t) &= \mu( \gamma_p^{c_j} \theta_j(D_{s,1}) \cap \theta_j(D_{t,1})) + \mu( \gamma_p^{c_j} \theta_j(D_{s,2}) \cap \theta_j(D_{t,2})) \\ & =  \mu( \theta_j( \gamma_p^{a_j} D_{s,1} \cap D_{t,1} ) ) + \mu( \theta_j( \gamma_p^{b_j} D_{s,2} \cap D_{t,2} ) ) \\ & =  t_j \mu(\gamma_p^{a_j} D_{s,1} \cap D_{t,1} ) + (1-t_j) \mu(\gamma_p^{b_j} D_{s,2} \cap D_{t,2} ).   \end{align*}

Similarly since $\theta(Y_i)$ is $c$-invariant we have \begin{align*} \mu(\gamma_p^{c} B_s \cap B_t)&= \mu(     \gamma_p^{c} \theta(B_{s,1}) \cap \theta(B_{t,1})) + \mu(     \gamma_p^{c} \theta(B_{s,2}) \cap \theta(B_{t,2})) \\ & =  \mu( \theta( \gamma_p^{a} B_{s,1} \cap B_{t,1} ) ) + \mu( \theta( \gamma_p^{b} B_{s,2} \cap B_{t,2} ) ) \\ & = t \mu(\gamma^{a}_p \cap B_{s,1} \cap B_{t,1}) + (1-t) \mu(\gamma^{b}_p \cap B_{s,2} \cap B_{t,2}) \end{align*}

Note that if $|x_1 - x_2| < \delta$ and $|y_1 - y_2| < \delta$ then $|x_1y_1 - x_2 y_2| < 3 \delta$. So our assumptions guarantee that we have \[|t_j \mu(\gamma_p^{a_j} D_{s,1} \cap D_{t,1} ) -  t \mu(\gamma^{a}_p \cap B_{s,1} \cap B_{t,1})| < \frac{\epsilon}{2} \] and \[|(1-t_j )\mu(\gamma_p^{b_j} D_{s,2} \cap D_{t,2} ) -  (1-t) \mu(\gamma^{b}_p \cap B_{s,2} \cap B_{t,2})| < \frac{\epsilon}{2} \] hence \[ |\mu(\gamma_p^{c_j} D_s \cap D_t) - \mu(\gamma_p^{c} B_s \cap B_t)| < \epsilon \] as claimed.\\
\\
Now we must show that for sufficiently large $j$, every partition $B_1,\ldots,B_l$ of $Y$ there is a partition $D_1,\ldots,D_l$ of $Y$ depending on $j$ such that for all $s,t \leq l$ and $p \leq m$ we have \[|\mu(\gamma_p^{c} D_s \cap D_t) - \mu(\gamma_p^{c_j} B_s \cap B_t)| < \epsilon.\] The argument is similar to the previous step, so we omit it. \end{proof}

\begin{corollary} $A_\sim(\Gamma,Y,\mu)$ is path connected. \end{corollary}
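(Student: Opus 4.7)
The plan is to deduce path connectedness as an essentially immediate consequence of the preceding proposition, which establishes that $\mathrm{A}_\sim(\Gamma,X,\mu)$ is a topological weak convex space (i.e.\ that $cc:[0,1]\times \mathrm{A}_\sim(\Gamma,X,\mu)^2\to \mathrm{A}_\sim(\Gamma,X,\mu)$ is jointly continuous).

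Given $[a],[b]\in \mathrm{A}_\sim(\Gamma,X,\mu)$, I would define $\gamma:[0,1]\to \mathrm{A}_\sim(\Gamma,X,\mu)$ by $\gamma(t)=cc_t([a],[b])=t[a]+(1-t)[b]$. By axiom $(1)$ of Definition \ref{def1}, $\gamma(0)=cc_0([a],[b])=[a]$, and by axiom $(3)$, $\gamma(1)=cc_1([a],[b])=cc_0([b],[a])=[b]$, so $\gamma$ has the correct endpoints. Continuity of $\gamma$ follows from the joint continuity of $cc$ established in the proposition, by holding the second and third coordinates fixed at $[a]$ and $[b]$ and letting $t$ vary.

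Thus any two points are joined by a continuous path, and $\mathrm{A}_\sim(\Gamma,X,\mu)$ is path connected. There is no substantive obstacle here; the entire content of the corollary is packed into the continuity of the convex combination operation, which has just been verified. The only point worth being careful about is the convention that fixes which endpoint corresponds to $t=0$ and which to $t=1$, which is settled by axioms $(1)$ and $(3)$ of Definition \ref{def1}.
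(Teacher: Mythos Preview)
Your proof is correct and is exactly what the paper intends: the corollary is stated immediately after the proposition with no separate argument, so the intended proof is precisely this observation that $t\mapsto cc_t([a],[b])$ gives a continuous path by the just-established joint continuity of $cc$.
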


\begin{corollary} $A_\sim(\Gamma,Y,\mu)$ is uncountable. \end{corollary}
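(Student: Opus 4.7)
The plan is straightforward: combine the path-connectedness from the previous corollary with the Hausdorff property of $\mathrm{A}_\sim(\Gamma,X,\mu)$ (supplied by the metric $d$ of Section 3) and the existence of at least two distinct weak equivalence classes. Assuming $\Gamma$ is nontrivial (otherwise $\mathrm{A}_\sim(\Gamma,X,\mu)$ is a singleton and the statement fails, so this must be an implicit hypothesis), once we have these two ingredients the conclusion follows immediately from the standard fact that any non-degenerate continuum is uncountable.

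To exhibit two distinct points, I would compare the trivial action $\iota$ (where $\gamma^\iota x = x$ for all $\gamma$) with any non-trivial action $b$. For the trivial action $\mu(\gamma^\iota A_q \cap A_r) = \mu(A_q \cap A_r)$, which vanishes when $q \neq r$, so every entry of $M^{\mathcal{A}}_{n,k}(\iota)$ with $q \neq r$ is $0$, and the same is true for every point of $C_{n,k}(\iota)$. But if $b$ is non-trivial, there exist $\gamma \in \Gamma$ and a partition $B_1, B_2$ of $X$ with $\mu(\gamma^b B_1 \cap B_2) > 0$, so some off-diagonal coordinate of $M^{B}_{n,2}(b)$ is bounded away from zero. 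This forces $C_{n,2}(b) \not\subseteq C_{n,2}(\iota)$, whence $b \not\sim \iota$.

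Given two distinct classes $[a] \neq [b]$ in the Hausdorff space $\mathrm{A}_\sim(\Gamma,X,\mu)$, the preceding corollary produces a continuous path $f: [0,1] \to \mathrm{A}_\sim(\Gamma,X,\mu)$ with $f(0) = [a]$ and $f(1) = [b]$. The image $f([0,1])$ is a compact connected subspace of a Hausdorff space containing at least two points, i.e.\ a non-degenerate continuum. By compact Hausdorffness (hence normality), Urysohn's lemma produces a non-constant continuous real-valued function on $f([0,1])$; its image is connected, so it is a non-degenerate interval in $\mathbb{R}$, which is uncountable. Hence $f([0,1])$, and in particular $\mathrm{A}_\sim(\Gamma,X,\mu)$, is uncountable.

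There is no real obstacle here. The only points requiring any care are the harmless observation that $\Gamma$ must be assumed non-trivial, and the separation of the trivial action from a non-trivial one — which, as above, is a one-line computation in the definitions of $M^{\mathcal{A}}_{n,k}$ and $C_{n,k}$.
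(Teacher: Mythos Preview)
Your argument is correct and matches the paper's intent: the paper states this corollary with no proof, leaving it as an immediate consequence of path-connectedness together with the metric (hence Hausdorff) structure on $\mathrm{A}_\sim(\Gamma,X,\mu)$ and the existence of two distinct classes. Your explicit verification that the trivial action is not weakly equivalent to a non-trivial one, and your remark that $\Gamma$ must be non-trivial, fill in precisely the details the paper suppresses.

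One minor simplification: rather than invoking Urysohn's lemma, you could use the metric $d$ directly---the function $d(\cdot,[a])$ is continuous on the path image $f([0,1])$, takes the value $0$ at $[a]$ and a positive value at $[b]$, so its image is a non-degenerate interval.
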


We now record a lemma which will be useful later, guaranteeing that the metric on $\mathrm{A}_\sim(\Gamma,X,\mu)$ behaves nicely with respect to the convex structure.

\begin{lemma}\label{lem3} For any convex set $K \subseteq \mathrm{A}_\sim(\Gamma,X,\mu)$ the function $d( \cdot,K) = \inf_{b \in K} d( \cdot,b)$ is convex. \end{lemma}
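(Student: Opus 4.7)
The plan is to prove a stronger, very useful intermediate claim: the metric $d$ is ``convex-Lipschitz'' in the sense that
\[
d\bigl(ta_1 + (1-t)a_2,\, tb_1 + (1-t)b_2\bigr) \;\leq\; t\,d(a_1,b_1) + (1-t)\,d(a_2,b_2).
\]
Once this is in hand, the lemma follows by the standard approximation argument: given $\epsilon>0$, choose $b_i\in K$ with $d(a_i,b_i)<d(a_i,K)+\epsilon$, observe $tb_1+(1-t)b_2\in K$ by convexity of $K$, and combine the two inequalities, then let $\epsilon\to0$.

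To prove the convex-Lipschitz inequality, I would work termwise in the definition of $d$ and show that for every fixed $n,k$,
\[
C_{n,k}\bigl(ta + (1-t)b\bigr) \;=\; t\,C_{n,k}(a) + (1-t)\,C_{n,k}(b),
\]
where the right-hand side is the Minkowski combination of subsets of $[0,1]^{n\times k\times k}$. The inclusion $\subseteq$ follows because any partition $\mathcal{A}=\{A_1,\dots,A_k\}$ of the disjoint-sum space $(X_1\sqcup X_2, t\mu_1+(1-t)\mu_2)$ splits as $\mathcal{A}^i=\{A_q\cap X_i\}_{q=1}^k$, and since $\gamma_p$ preserves each $X_i$ under $ta+(1-t)b$, one computes
\[
\mu\bigl(\gamma_p^{ta+(1-t)b}A_q \cap A_r\bigr) = t\,\mu_1\bigl(\gamma_p^{a}(A_q\cap X_1)\cap(A_r\cap X_1)\bigr) + (1-t)\,\mu_2\bigl(\gamma_p^{b}(A_q\cap X_2)\cap(A_r\cap X_2)\bigr),
\]
exhibiting $M^{\mathcal{A}}_{n,k}(ta+(1-t)b)$ as the convex combination of matrices coming from $\mathcal{A}^1,\mathcal{A}^2$; taking closures preserves the inclusion since the $C_{n,k}$ are compact. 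Conversely, any pair of partitions of $X_1,X_2$ into $k$ pieces glues to a partition of the disjoint sum, giving the reverse inclusion.

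Given that identity, I would invoke the standard fact that the Hausdorff distance is convex under Minkowski combination of compact sets:
\[
d_H(tA_1+(1-t)A_2,\, tB_1+(1-t)B_2) \;\leq\; t\,d_H(A_1,B_1) + (1-t)\,d_H(A_2,B_2),
\]
which is an easy three-line calculation using the definition $d_H(E,F)=\max\{\sup_{e\in E}\inf_{f\in F}|e-f|,\sup_{f\in F}\inf_{e\in E}|e-f|\}$ and the triangle inequality. Summing the resulting bounds $d_H(C_{n,k}(ta_1+(1-t)a_2),C_{n,k}(tb_1+(1-t)b_2))\leq t\,d_H(C_{n,k}(a_1),C_{n,k}(b_1))+(1-t)\,d_H(C_{n,k}(a_2),C_{n,k}(b_2))$ against the weights $2^{-(n+k)}$ gives the convex-Lipschitz estimate.

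The main obstacle is the Minkowski identity for $C_{n,k}$. The only subtlety is that the partition representing a given matrix in $C_{n,k}(ta+(1-t)b)$ is arbitrary (its pieces need not respect the decomposition $X_1\sqcup X_2$), but this is harmless because we are free to replace any partition $\mathcal{A}$ by its refinement along $X_1\sqcup X_2$ which yields the same matrix $M^{\mathcal{A}}_{n,k}$. Everything else is routine.
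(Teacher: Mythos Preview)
Your proposal is correct and follows essentially the same route as the paper: both arguments reduce to the convex-Lipschitz inequality $d(ta_1+(1-t)a_2,\,tb_1+(1-t)b_2)\leq t\,d(a_1,b_1)+(1-t)\,d(a_2,b_2)$, proved termwise at the level of $C_{n,k}$ by splitting a partition of the disjoint sum along $X_1\sqcup X_2$ and recombining. Your presentation is slightly more modular in that you isolate the Minkowski identity $C_{n,k}(ta+(1-t)b)=t\,C_{n,k}(a)+(1-t)\,C_{n,k}(b)$ and then invoke the general convexity of Hausdorff distance under Minkowski combination, whereas the paper carries out the same computation in one pass without naming these intermediate facts; the content is the same.
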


\begin{proof} Let $x,y \in \mathrm{A}_\sim(\Gamma,X,\mu)$ and consider $tx + (1-t)y$. Fix $n,k$ and write $C(a)$ for $C_{n,k}(a)$. It suffices to show that \[ \inf_{b \in K} d_H( C(tx + (1-t)y), C(b)) \leq t( \inf_{b \in K} d_H( C(x), C(b) ) + (1-t)( \inf_{b \in K} d_H( C(y),C(b)))\] where $d_H$ is the Hausdorff distance in the space $[0,1]^{n \times k^2}$. Fix $\epsilon > 0$. It suffices to find $a \in K$ with 
\begin{equation} d_H( C(tx + (1-t)y), C(a)) \leq t( \inf_{b \in K} d_H( C(x), C(b) ) + \epsilon ) + (1-t)( \inf_{b \in K} d_H( C(y),C(b)) + \epsilon). \end{equation}

Choose $c \in K$ with $d_H(C(x),C(c)) < \inf_{b \in K} d_H( C(x), C(b) ) + \epsilon$ and choose $d \in K$ with $d_H(C(x),C(d)) < \inf_{b \in K} d_H( C(y), C(b) ) + \epsilon$. Note that since $K$ is convex, $tc + (1-t)d \in K$. We claim

\[ d_H( C(tx + (1-t) y), C(tc + (1-t) d)) \leq t d_H(C(x),C(c)) + (1-t) d_H(C(y),C(d)) \],

which implies $(4)$. Let $\delta > 0$, it then suffices to show 

\begin{equation} d_H( C(tx + (1-t) y), C(tc + (1-t) d)) \leq t( d_H(C(x),C(c)) + \delta) + (1-t) (d_H(C(y),C(d)) + \delta). \end{equation}

Let $X_1$ and $X_2$ be two copies of $X$ and $\nu$ be the measure on $X_1 \sqcup X_2$ given by $t (\mu \upharpoonright X_1) + (1-t) (\mu \upharpoonright X_2)$. Let $\mathcal{P} = (P_i)_{i=1}^k$ be a partition of $X_1 \sqcup X_2$. This induces a partition $\mathcal{P}_1 = (P^1_i)_{i=1}^k$ of $X_1$ given by $P^1_i = P_i \cap X_1$ and similarly we have a partition $\mathcal{P}_2 = (P^2_i)_{i=1}^k$ of $X_2$. We can find a partition $\mathcal{Q}_1 = (Q^1_i)_{i=1}^k$ of $X_1$ such that for $m \leq n$ and $i,j \leq k$ we have

\[ | \mu(\gamma_m^x P^1_i \cap P^1_j) - \mu(\gamma_m^c Q^1_i \cap Q^1_j)| < d_H(C(x),C(c)) + \delta \]

and similarly we can find a partition $\mathcal{Q}_2 = (Q^2_i)_{i=1}^k$ of $X_2$ such that for $m \leq n$ and $i,j \leq k$ we have

\[ | \mu(\gamma_m^y P^2_i \cap P^2_j) - \mu(\gamma_m^d Q^2_i \cap Q^2_j)| < d_H(C(y),C(d)) + \delta. \]

Let $\mathcal{Q} = (Q_i)_{i=1}^k$ be the partition of $X_1 \sqcup X_2$ given by $Q_i = Q^1_i \sqcup Q^2_i$. Write $t( d_H(C(x),C(c)) + \delta) + (1-t) (d_H(C(y),C(d)) + \delta) = r$. Then for all $m \leq n$ and $i,j \leq k$ we have

\begin{align*} |\nu( \gamma_m^{tx + (1-t)y} P_i \cap P_j) - \nu( \gamma_m^{tc + (1-t)d} Q_i \cap Q_j)| & \leq  |t\mu(\gamma_m^x P^1_i \cap P^1_j) - t\mu(\gamma_m^c Q^1_i \cap Q^1_j)| \\ & +  | (1-t)\mu(\gamma_m^y P^2_i \cap P^2_j) - (1-t)\mu(\gamma_m^d Q^2_i \cap Q^2_j)| \\ &\leq r \end{align*}

We have shown that $C(tx + (1-t) y) \subseteq B_r(C(tc + (1-t)d))$. The argument that $C(tc + (1-t)d) \subseteq B_r(C(tx + (1-t)y))$ is identical, so we omit it. Thus we conclude $d_H( C(tx + (1-t) y), C(tc + (1-t) d)) \leq r$ and $(5)$ holds. \end{proof}

We note that $\mathrm{A}_\sim(\Gamma,X,\mu)$ in fact has additional structure in that it admits convex combinations of infinitely many elements. We first consider the case of a countable convex combination. If $\lambda_i \in [0,1]$ are such that $\sum_{i=1}^\infty \lambda_i = 1$ and $a_i \in \mathrm{A}_\sim(\Gamma,X,\mu)$ then we can naturally define an action $\sum_{i=1}^\infty \lambda_i a_i$ on the disjoint sum $\bigsqcup_{i=1}^\infty X_i$ with the $i$ copy of $X$ weighted by $\lambda_i$. It remains to check that this is independent of the choice of representatives $a_i$.

\begin{proposition} \label{prop2} If $a_i \prec b_i$ for all $i$, then $\sum_{i=1}^\infty \lambda_i a_i \prec \sum_{i=1}^\infty \lambda_i b_i$.\end{proposition}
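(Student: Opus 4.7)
The plan is to verify the defining condition of $\prec$ directly, working coordinate-wise across the countably many components. Fix representatives $a_i$ on $(X_i,\mu_i) \cong (X,\mu)$ and $b_i$ on $(Y_i,\nu_i)$, so that $a := \sum_i \lambda_i a_i$ lives on $(Y,\nu) = \left( \bigsqcup_i X_i,\, \sum_i \lambda_i \mu_i \right)$ and similarly $b := \sum_i \lambda_i b_i$ on some $(Z,\rho)$. Given a partition $A_1,\ldots,A_n$ of $Y$, a finite $F \subseteq \Gamma$, and $\epsilon > 0$, the goal is to produce $B_1,\ldots,B_n \subseteq Z$ witnessing the approximation to within $\epsilon$.

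First I would decompose the given partition by components: putting $A^i_j := A_j \cap X_i$ gives a partition $(A^i_j)_{j=1}^n$ of $X_i \cong X$ for each $i$. Then, for each $i$, I would invoke the hypothesis $a_i \prec b_i$ applied to this partition, the same $F$, and the same tolerance $\epsilon$, to extract sets $B^i_j \subseteq Y_i$ satisfying
$$|\mu(\gamma^{a_i} A^i_j \cap A^i_k) - \nu(\gamma^{b_i} B^i_j \cap B^i_k)| < \epsilon$$
for all $\gamma \in F$ and $j,k \leq n$. Finally I would patch these together by setting $B_j := \bigsqcup_i B^i_j \subseteq Z$.

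For the verification, the fact that each $X_i$ is $a$-invariant and each $Y_i$ is $b$-invariant yields the decomposition $\gamma^a A_j \cap A_k = \bigsqcup_i (\gamma^{a_i} A^i_j \cap A^i_k)$, from which
$$\nu(\gamma^a A_j \cap A_k) = \sum_i \lambda_i\, \mu(\gamma^{a_i} A^i_j \cap A^i_k)$$
and the analogous identity for $B_j, B_k$ against $\rho$ and $b$. The triangle inequality applied term-by-term, combined with $\sum_i \lambda_i = 1$, then immediately gives the bound $\epsilon$.

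I expect no substantive obstacle; this is essentially a routine upgrade of the finite case already used tacitly when the convex operation $cc_t$ was defined. The key mechanism is that a uniform tolerance $\epsilon$ on each coordinate is enough precisely because the weights $\lambda_i$ average the error. The only minor points to track are countable choice (to pick the $B^i_j$ simultaneously for all $i$), absolute convergence of the series defining the measures (automatic, since each summand lies in $[0,1]$ and $\sum_i \lambda_i = 1$), and the observation that one does not need $(B_1,\ldots,B_n)$ to form a partition of $Z$, since the definition of $\prec$ only demands measurable sets.
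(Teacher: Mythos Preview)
Your argument is correct and follows the same componentwise decomposition as the paper: restrict the given sets to each summand, apply $a_i \prec b_i$ there, and reassemble. The one difference is tactical rather than structural: the paper first truncates, choosing $N$ with $\sum_{m \geq N} \lambda_m < \epsilon/2$, applies the hypothesis only on the finitely many components $m < N$ with tolerance $\epsilon/2$, and bounds the tail trivially; you instead apply the hypothesis on every component with tolerance $\epsilon$ and let the convex weights $\sum_i \lambda_i = 1$ average the error. Your version is marginally cleaner but relies on a countable family of choices, which the paper's truncation sidesteps; either way the content is the same.
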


\begin{proof} Let $A_1,\ldots,A_k \subseteq \bigsqcup_{m=1}^\infty X_m$, $\epsilon > 0$ and $F \subseteq \Gamma$ finite be given. Choose $N$ such that $\sum_{m=N}^\infty \lambda_m < \frac{\epsilon}{2}$. For each $m < N$, consider the partition $A^m_1,\ldots,A^m_k$ of $X_m$ given by $A^m_i = A_i \cap X_m$. We can find for each $m<N$ a partition $B^m_1,\ldots,B^m_k$ such that for all $\gamma \in F$ and $i,j \leq k$ we have \[ |\mu(\gamma^{a_i} A^m_i \cap A^m_j) - \mu(\gamma^{b_i} B^m_i \cap B^m_j)| < \frac{\epsilon}{2}.\] Let $B_i = \bigsqcup_{m=1}^\infty B^m_i$. Then

\begin{align*} \left \vert \mu\left(\gamma^{\sum_{m=1}^\infty \lambda_m a_m} A_i \cap A_j \right) - \mu\left(\gamma^{\sum_{m=1}^\infty \lambda_m b_m} B_i \cap B_j \right) \right \vert & \leq |\sum_{m=1}^N \lambda_m \mu(\gamma^{a_m} A^m_i \cap A^m_j) - \sum_{m=1}^N \lambda_m \mu(\gamma^{b_m} B^m_i \cap B^m_j)| \\ \hspace{1 in} &+  |\sum_{m=M}^\infty \lambda_m \mu(\gamma^{a_m} A^m_i \cap A^m_j) - \sum_{m=M}^\infty \lambda_m \mu(\gamma^{b_m} B^m_i \cap B^m_j)| \\ & \leq \sum_{m=1}^N \lambda_m |\mu(\gamma^{a_i} A^m_i \cap A^m_j) - \mu(\gamma^{b_i} B^m_i \cap B^m_j)| + \frac{\epsilon}{2} \\ & \leq \frac{\epsilon}{2} \left( \sum_{m=1}^N \lambda_m \right) + \frac{\epsilon}{2} \leq \epsilon. \end{align*} \end{proof}

It is in fact possible to define integrals of weak equivalence classes of actions over a probability measure. Let $(Z,\eta)$ be a probability space and suppose that for each $z$ we have a probability space $(Y_z,\nu_z)$ and a measure-preserving action $\Gamma \curvearrowright^{a_z} (Y_z,\nu_z)$ such that the map $z \mapsto [a_z]$ from $(Z,\eta)$ to $A^*_\sim(\Gamma)$ is measurable, where $[a_z]$ is the weak equivalence class of $a_z$. Note that we do not require $(X_z,\nu_z)$ or $(Z,\eta)$ to be standard. Let $Y = \bigsqcup_{z \in Z} Y_z$ and put a measure $\nu$ on $Y$ by taking $\nu(A) = \int_Z \nu_z(A \cap Y_Z) d \eta(z)$. $Y$ will be a standard probability space isomorphic to $(X,\mu)$ if $(Z,\eta)$ is standard or $\eta$-almost all $(Y_z,\nu_z)$ are standard. Let $\Gamma \curvearrowright^a (Y,\nu)$ be given by letting $\Gamma$ act like $a_z$ on $Y_z$. We write $a = \int_Z a_z d \eta(z)$. We then have a map $\phi: Y \to Z$ given by letting $\phi(y)$ be the unique $z$ such that $y \in Y_z$. This is clearly a factor map from $a$ to $\iota_{Z,\eta}$ and $\nu = \int_Z \nu_z d \eta(z)$ is the disintegration of $\nu$ over $\eta$ via $\phi$. Thus Theorem 3.12 in \cite{RTD} guarantees that if $b_z$ are actions of $\Gamma$ on $(Y_z,\nu_z)$ with $b_z \sim a_z$ then if $b = \int_Z b_z d \eta(z)$ we have $a \sim b$. Therefore this construction gives a well-defined weak equivalence class of actions of $\gamma$. If we restrict $(Y_z,\nu_z)$ to be standard, then we in fact have a mapping from the space $M(\mathrm{A}_\sim(\Gamma,X,\mu))$ of probability measures on $\mathrm{A}_\sim(\Gamma,X,\mu)$ to $\mathrm{A}_\sim(\Gamma,X,\mu)$.

\begin{lemma}\label{lem11} For any $n,k$, and $(Z,\eta)$ and measurable assignment $z \mapsto a_z$, we have $C_{n,k}\left( \int_Z a_z d \eta(z) \right) \subseteq \cch \left( \bigcup_{z \in Z} C_{n,k}(a_z) \right)$. \end{lemma}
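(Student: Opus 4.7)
The plan is to exhibit every point of $C_{n,k}\!\left(\int_Z a_z\,d\eta(z)\right)$ as an integral of points of $C_{n,k}(a_z)$, and then invoke the standard fact that integrals (barycenters) of measurable functions into bounded subsets of a finite-dimensional real vector space lie in the closed convex hull of the range.

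First, I would fix a $k$-partition $\mathcal{A} = (A_1,\ldots,A_k)$ of $Y = \bigsqcup_{z \in Z} Y_z$ and let $\mathcal{A}^z = (A_1 \cap Y_z, \ldots, A_k \cap Y_z)$ be the induced $k$-partition of $Y_z$. Since each $Y_z$ is $a$-invariant and $a$ restricts to $a_z$ on $Y_z$, for every $\gamma \in \Gamma$ and all $q,r \le k$,
\[ \gamma^a A_q \cap A_r \cap Y_z \;=\; \gamma^{a_z}(A_q \cap Y_z) \cap (A_r \cap Y_z). \]
Applying the disintegration $\nu(B) = \int_Z \nu_z(B \cap Y_z)\,d\eta(z)$ coordinatewise, one obtains the key identity
\[ M_{n,k}^{\mathcal{A}}(a) \;=\; \int_Z M_{n,k}^{\mathcal{A}^z}(a_z)\,d\eta(z), \]
where the integral is taken entry-by-entry in $[0,1]^{n \times k \times k}$.

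Next, I would observe that by construction $M_{n,k}^{\mathcal{A}^z}(a_z) \in C_{n,k}(a_z) \subseteq \bigcup_{z \in Z} C_{n,k}(a_z)$, and that the integrand $z \mapsto M_{n,k}^{\mathcal{A}^z}(a_z)$ is $\eta$-measurable because each entry $z \mapsto \nu_z(\gamma_p^a A_q \cap A_r \cap Y_z)$ is measurable (this is what the disintegration formula provides). Since the integrand takes values in a bounded subset of the finite-dimensional space $[0,1]^{n \times k^2}$, the elementary barycenter principle says that $\int_Z M_{n,k}^{\mathcal{A}^z}(a_z)\,d\eta(z)$ lies in the closed convex hull of the range of the integrand, hence in $\cch\!\left(\bigcup_{z \in Z} C_{n,k}(a_z)\right)$. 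Thus every point of the form $M_{n,k}^{\mathcal{A}}(a)$ lies in this closed convex hull.

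Finally, because $C_{n,k}(a)$ is the closure of $\{M_{n,k}^{\mathcal{A}}(a) : \mathcal{A}\ \text{a $k$-partition of } Y\}$ and $\cch\!\left(\bigcup_{z \in Z} C_{n,k}(a_z)\right)$ is closed, taking limits yields the desired inclusion. The only nontrivial point is verifying that $z \mapsto M_{n,k}^{\mathcal{A}^z}(a_z)$ is measurable and that the entry-wise integral actually computes $M_{n,k}^{\mathcal{A}}(a)$; both reduce directly to the definition of the disintegration $\nu = \int_Z \nu_z\,d\eta(z)$ and the $a$-invariance of each $Y_z$, so this is the part of the argument requiring care but no genuine obstacle.
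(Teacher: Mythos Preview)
Your proof is correct and follows essentially the same approach as the paper: both express $M^{\mathcal{A}}_{n,k}(a)$ as the integral $\int_Z M^{\mathcal{A}^z}_{n,k}(a_z)\,d\eta(z)$ of fiberwise moment vectors and then argue that this integral lies in $\cch\bigl(\bigcup_z C_{n,k}(a_z)\bigr)$. The only difference is in justifying that last step: the paper passes to a countable dense family of partitions, retopologizes $Z$ so the corresponding $f_{\mathcal{A}}$ are continuous, and approximates $\eta$ by finitely supported measures, whereas you invoke the barycenter principle directly (which is cleaner and avoids the retopologization).
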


\begin{proof} Fix $n,k$ and let $a = \int_Z a_z d \eta(z)$. Let $(X_z,\mu_z)$ by the underlying measure space of $a_z$. Let $\mathcal{L}$ be a countable dense subset of $\mathrm{MALG} \left ( \bigsqcup_{z \in Z} X_z, \int_Z \mu_z d \eta(z) \right )$, so that $\mathcal{L}^k$ is dense in the space of $k$-partitions of $\bigsqcup_{z \in Z} X_z$. Then $\{M_{\mathcal{A}}(a) \}_{\mathcal{A} \in \mathcal{L}^n }$ is dense in $C_{n,k} \left(\int_Z a_z d \eta(z) \right)$, so it suffices to show that each $M_{\mathcal{A}}(a) \in  \cch \left( \bigcup_{z \in Z} C(a_z) \right)$. For each $\mathcal{A}$, the function $f_\mathcal{A}:Z \to \mathbb{R}^{n \times k \times k}$ given by $z \mapsto M_{\mathcal{A}_z}(a_z)$ is a Borel function, where $\mathcal{A}_z$ is the partition of $X_z$ given by $(A \cap X_z)_{A \in \mathcal{A}}$. Thus $M_{\mathcal{A}}(a) = \int_Z f_{\mathcal{A}}(a) d \eta(z)$. We may assume that $Z$ carries a Polish topology such that $f_{\mathcal{A}}$ is continuous for all $\mathcal{A} \in \mathcal{L}^n$. Choose a sequence of measures $(\nu_i)_{i=1}^\infty$ such that $\nu_i$ has finite support and $\nu_i \to \eta$ in the topology of $M(Z)$, the space of all Borel probability measures on $Z$. If we write $\nu_i = \sum_{j=1}^{j(i)} \alpha_j \delta_{z_j}$ then \[ \int_Z f_{\mathcal{A}}(z) d \nu_i(z) = \sum_{j=1}^{j(i)} \alpha_j f_{\mathcal{A}}(z_j) \in \ch \left( \bigcup_{z \in A} C(a_z) \right). \] Since $\nu_i \to \eta$, we have \[ \int_Z f_{\mathcal{A}}(z) d \nu_i(z) \to \int_Z f_{\mathcal{A}}(z) d \eta(z) \] which proves the lemma. \end{proof}

\section{The structure of $\mathrm{A}_\sim(\Gamma,X,\mu)$ for amenable $\Gamma$.}

When $\Gamma$ is amenable, the structure of $\mathrm{A}_\sim(\Gamma,X,\mu)$ can be completely described using the notion of an invariant random subgroup. We begin with the following, the following extends Theorem 1.8 in \cite{RTD}. Recall that if $\Gamma \curvearrowright^a (X,\mu)$ is a measure-preserving action, we have a map $\mathrm{stab}_a: X \to \mathrm{Sub}(\Gamma)$ given by $x \mapsto \mathrm{stab}_a(x)$. The type of $a$ is the invariant random subgroup of $\Gamma$ given by $(\mathrm{stab}_a)_* \mu$.

\begin{proposition}\label{prop8} If $\Gamma$ is amenable and $a,b \in \mathrm{A}(\Gamma,X,\mu)$ then $\mathrm{type}(a) = \mathrm{type}(b)$ if and only if $a \sim b$. \end{proposition}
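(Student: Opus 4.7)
The plan is to prove the two directions separately. Only the backward direction will use amenability.

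For the forward direction, I would show that $\mathrm{type}(a)$ is a weak equivalence invariant for any $\Gamma$. The basic clopen sets of $\mathrm{Sub}(\Gamma)$ have the form $U_{F,G} = \{H : F \subseteq H,\ G \cap H = \emptyset\}$ for finite $F,G \subseteq \Gamma$, so it suffices to show that $\mathrm{type}(a)(U_{F,G}) = \mu\bigl(\bigcap_{\gamma \in F} \mathrm{Fix}_a(\gamma) \setminus \bigcup_{\delta \in G} \mathrm{Fix}_a(\delta)\bigr)$ can be recovered from the partition statistics $\mu(\gamma^a A_i \cap A_j)$ entering the definition of weak containment. The key identity is that for any finite Borel partition $\mathcal{A}$ of $X$,
$$\sum_{A \in \mathcal{A}} \mu\Bigl(A \cap \bigcap_{\gamma \in F} (\gamma^{-1})^a A\Bigr) = \mu\bigl\{ x : \gamma^a x \text{ lies in the same part as } x \text{ for all } \gamma \in F\bigr\},$$
which increases to $\mu\bigl(\bigcap_{\gamma \in F} \mathrm{Fix}_a(\gamma)\bigr)$ as the partition refines to generate the Borel $\sigma$-algebra; the set differences involving $G$ are handled by inclusion–exclusion. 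The quantities on the left can be simultaneously approximated by the corresponding ones for $b$ using the definition of $a \sim b$, so the supremum is a weak equivalence invariant.

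For the backward direction, the plan is to reduce to the ergodic case, apply Tucker-Drob's Theorem 1.8, and then re-integrate. Write ergodic decompositions $a = \int_Z a_z\, d\eta(z)$ and $b = \int_W b_w\, d\lambda(w)$. Since $\mathrm{stab}_{a_z}$ is $\Gamma$-equivariant and $a_z$ is ergodic, each $\mathrm{type}(a_z)$ is an ergodic invariant random subgroup, and $\mathrm{type}(a) = \int_Z \mathrm{type}(a_z)\, d\eta(z)$ is the ergodic decomposition of the measure $\mathrm{type}(a)$ on $\mathrm{Sub}(\Gamma)$ under conjugation; similarly for $b$. The assumption $\mathrm{type}(a) = \mathrm{type}(b)$ together with uniqueness of ergodic decomposition then produces a measure-preserving identification of $(Z,\eta)$ with $(W,\lambda)$ so that $\mathrm{type}(a_z) = \mathrm{type}(b_z)$ for $\eta$-almost every $z$.

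Having matched the components, I would invoke the ergodic version due to Tucker-Drob (Theorem 1.8 of \cite{RTD}) to conclude $a_z \sim b_z$ for almost every $z$, and then apply Theorem 3.12 of \cite{RTD} (already cited in the paper in the discussion of the integral construction) to lift pointwise weak equivalence of ergodic components to weak equivalence of the integrals, yielding $a \sim b$. The main obstacle will be the component-matching step: it requires verifying that ergodicity of the action descends to ergodicity of its type under the conjugation action on $\mathrm{Sub}(\Gamma)$, and that the resulting decompositions of $\mathrm{type}(a)$ and $\mathrm{type}(b)$ into ergodics are genuinely unique and measurably matchable. Once this is in hand, the rest of the argument is purely a matter of invoking \cite{RTD}.
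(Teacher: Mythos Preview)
Your approach is sound but differs from the paper's. For the forward direction the paper simply cites \cite{AbEl} for the fact that type is a weak-equivalence invariant; your direct argument via partition statistics is a reasonable alternative.

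For the backward direction the paper does not pass through ergodic decompositions. It splits $X$ according to whether $\mathrm{stab}_a(x)$ has infinite or finite index. On the infinite-index part it applies Theorem~1.8(2) of \cite{RTD} directly --- that result, as used here, is stated for types concentrated on infinite-index subgroups, not as an ``ergodic version''. On the finite-index part the paper proves from scratch (and without using amenability) that two actions with the same type concentrated on index-$n$ subgroups are weakly equivalent, via an explicit matching of orbits through canonically chosen homomorphisms into $\mathrm{Sym}(n)$.

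Your route works too, with one caveat you do not address: an ergodic component $a_z$ whose stabilizers have finite index is essentially a transitive action on a finite coset space, hence lives on an atomic probability space and is not covered by Theorem~1.8 of \cite{RTD} as you invoke it. You must treat those components separately --- this is trivial (equal ergodic types supported on finite-index subgroups force conjugate stabilizers and hence isomorphic actions), but it needs to be said. The matching step you flag is indeed doable: push $\eta$ and $\lambda$ forward to the space of ergodic invariant random subgroups and use uniqueness of ergodic decomposition there, then match fibers over each ergodic IRS. The paper's approach is shorter and produces as a byproduct a finite-index lemma valid for arbitrary $\Gamma$; yours is more conceptual and reuses the integral machinery and Theorem~3.12 of \cite{RTD} already set up elsewhere in the paper, at the cost of the matching argument and the finite-index caveat.
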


\begin{proof} By \cite{AbEl} type is an invariant of weak equivalence so suppose $\mathrm{type}(a) = \mathrm{type}(b)$. \\
\\
Let $X^a_\infty = \{x \in X: [\Gamma: \mathrm{stab}_a(x) = \infty] \}$ and $X^b_\infty = \{x \in X: [\Gamma: \mathrm{stab}_b(x)] = \infty \}$. Notice that $X^a_\infty$ is $a$-invariant and $X^b_\infty$ is $b$-invariant and since $\mathrm{type}(a) = \mathrm{type}(b)$, $\mu(X^a_\infty) = \mu(X^b_\infty)$. Suppose that $\mu(X^a_\infty) > 0$ and let $a_\infty = a \upharpoonright X^a_\infty$ with normalized measure $\frac{\mu \upharpoonright X^a_\infty}{\mu(X^a_\infty)}$ and define $b_\infty$ similarly. Then $\mathrm{type}(a_\infty) = \mathrm{type}(b_\infty)$ and these are concentrated on the infinite index subgroups of $\Gamma$, therefore $a_\infty \sim b_\infty$ by Theorem 1.8 (2) in \cite{RTD}. Thus to prove the proposition it suffices to show the following. Note that for this we do not require $\Gamma$ to be amenable.

\begin{lemma} Suppose $a,b \in \mathrm{A}(\Gamma,X,\mu)$ are actions such that $\mathrm{type}(a) = \mathrm{type}(b)$ and these are concentrated on the finite-index subgroups of $\Gamma$. Then $a \sim b$.  \end{lemma}

\begin{proof} We may assume that $\theta = \mathrm{type}(a) = \mathrm{type}(b)$ is concentrated on the subgroups of index $n$ for some fixed $n$. Consider an $a$-orbit $C$. For each linear ordering $<_C^i$ of $C$, we get a homomorphism $\psi_C^i: \Gamma \to \mathrm{Sym}(n)$, where $\mathrm{Sym}(n)$ is the symmetric group on $n$ letters. Place a Borel linear order $\sqsubset$ on $\mathrm{Sym}(n)^\Gamma$. Let then $<^a_C = <_C^{i_0}$ be the linear order such that $\psi_C^{i_0}$ is $\sqsubset$-least among all the $\psi^i_C$. Write $\phi^a_C$ for $\psi^{i_0}_C$. Use this same construction to choose homomophisms $\phi^b_D$ for each $b$-orbit $D$. Write $\phi^a_x$ for $\phi^a_{[x]_{E_a}}$ and similarly $\phi^b_x$ for $\phi^b_{[x]_{E_b}}$.\\
\\
For a homomorphism $\phi: \Gamma \to \mathrm{Sym}(n)$ let $j_\phi$ be the corresponding action of $\Gamma$ on $\{1,\ldots,n\}$. Say $\phi$ is transitive if $j_{\phi}$ is transitive. Each transitive homomorphism $\phi: \Gamma \to \mathrm{Sym}(n)$ determines a conjugacy class $\mathcal{H}_\phi$ of index $n$ subgroups of $\Gamma$ as the stabilizers of $j_\phi$. For each $a$-orbit $[x]_{E_a}$ the stabilizers of the action of $\Gamma$ on $[x]_{E_a}$ also determine a conjugacy class $\mathcal{H}^a_x$ of index $n$ subgroups of $\Gamma$. Let $c$ be the action of $\mathrm{Sym}(n)$ on $\mathrm{Sym}(n)^\Gamma$ by $(f \cdot \phi)(\gamma)(k) = f \phi(\gamma) f^{-1}(k)$. Then $[\phi^a_x]_{E_c} = \left \{ \psi^i_{[x]_{E_a}}: <_{[x]_{E_a}}^i \mbox{ is a linear ordering of } [x]_{E_a} \right \}$. Let $\mathcal{L}$ be the set of all transitive homomorphisms $\phi: \Gamma \to \mathrm{Sym}(n)$ such that $\phi$ is $\sqsubset$-least in $[\phi]_{E_c}$. It is clear that for $\phi \in \mathcal{L}$, $\phi^a_x = \phi$ if and only if $\mathcal{H}^a_x = \mathcal{H}_\phi$. Similarly $\phi^b_x = \phi$ if and only if $\mathcal{H}^b_x = \mathcal{H}_\phi$. Thus for any $A \subseteq \mathcal{L}$, we have \begin{align*} \mu( \{x: \phi^a_x \in A \}) & = \mu( \{x: \mathcal{H}^a_x = \mathcal{H}_\phi \mbox{ for some } \phi \in A \}) \\
& = \mu( \{x: \mathrm{stab}_a(x) \mbox{ is conjugate to an element of } \mathcal{H}_\phi \mbox{ for some } \phi \in A \} ) \\ & = \theta(\{H \in \mathrm{Sub}(\Gamma): H \mbox{ is conjugate to an element of } \mathcal{H}_\phi \mbox{ for some }\phi \in A \}) \\ & = \mu( \{ x: \mathrm{stab}_b(x) \mbox{ is conjugate to an element of } \mathcal{H}_\phi \mbox{ for some } \phi \in A \} ) \\ & = \mu( \{x : \phi^b_x \in A\}). \end{align*}

Now, fix a finite set $F \subseteq \Gamma$ and a partition $A_1,\ldots,A_m$ of $X$. For each map $\omega: F \to \mathrm{Sym}(n)$ let $X^a_\omega = \{x \in X: \phi^a_x \upharpoonright F = \omega \}$ and similarly $X^b_\omega = \{x \in X: \phi^b_x \upharpoonright F = \omega \}$. Then $(X^a_{\omega})_{\omega \in \mathrm{Sym}(n)^F}$ and $(X^b_\omega)_{\omega \in \mathrm{Sym}(n)^F}$ are finite decompositions of $X$ with $\mu(X^a_\omega) = \mu(X^b_\omega)$. For $k \leq n$ let \[ X^a_{\omega,k} = \left \{x \in X^a_{\omega}: x \mbox{ is in the } k \mbox{-position with respect to }<^a_{[x]_{E_a}} \right \} \] and define $X^b_{\omega,k}$ similarly. We claim that for each $k$ there is a measure-preserving bijection $S^a_{\omega,k}$ of $X^a_{\omega,k}$ with $X^a_{\omega,1}$. Let $\sqsubset_1$ be a wellordering of $\Gamma$. For each $\gamma \in \Gamma$ let \[X^a_{\omega,k,\gamma} = \left \{x \in X^a_{\omega,k}: \mbox{ the } \sqsubset_1\mbox{- least }\delta \in \Gamma \mbox{ with  }\delta^a x \in X^a_{\omega,1} \mbox{ is equal to } \gamma \right \}. \]  Put then $S^a_{\omega,k} \upharpoonright X^a_{\omega,k,\gamma} = \gamma^a$. In particular, this shows that $\mu(X^a_{\omega,k}) = \frac{\mu(X^a_\omega)}{n}$. We can perform the same construction for $b$ and we see that $\mu(X^b_{\omega,k}) = \frac{\mu(X^b_\omega)}{n}$. So $\mu(X^a_{\omega,1}) = \mu(X^b_{\omega,1})$ and hence there is a measure-preserving bijection $T_{\omega,1}$ of each $X^a_{\omega,1}$ with $X^b_{\omega,1}$. Define a measure-preserving bijection $T_{\omega}$ of $X^a_\omega$ with $X^b_\omega$ by letting $T_{\omega}(x) = (S^b_{\omega,k})^{-1} T S^a_{\omega,k}(x)$ for $x \in X^a_{\omega,k}$. Let then $T = \bigcup_{\omega \in \mathrm{Sym}(n)^F } T_\omega$ so $T \in \mathrm{Aut}(X,\mu)$.\\
\\
We claim that for all $\gamma \in F$ and all $x \in X$, we have $T(\gamma^a x) = \gamma^b T(x)$. Indeed, suppose $x \in X^a_{\omega,k}$ so that $x$ is in the $k$-position with respect to $<^a_{[x]_{E_a}}$. Then $\gamma^a x$ is in the $\phi^a_x(\gamma)(k) = \omega(k)$ position with respect to $<^a_{[x]_{E_a}}$ so $T(\gamma^a_x)$ is in the $\omega(k)$ position of the $E_b$-class $D$ such that $T_{\omega,1} S^a_{\omega,k}(x) \in D$, where $D$ has the canonical order $<^b_D$. On the other hand, $T(x) = T_{\omega}(x)$ is in the $k$-position of $D$ with respect to $<^b_D$. Hence $\gamma^b T(x)$ is in the $\phi^b_{T(x)}(\gamma)(k) = \omega(k)$ position of $D$ and we have the claim. Now, for $i \leq m$ putting $B_i = T(A_i)$ we have for any $\gamma$ in $F$ and $i,j \leq m$, \begin{align*} \mu(\gamma^b B_i \cap B_j) & = \mu(\gamma^b T(A_i) \cap T(A_j)) \\ & = \mu(T(\gamma^a A_i) \cap T(A_j) \\ & = \mu(T(\gamma^a A_i \cap A_j)) \\ & = \mu(\gamma^a A_i \cap A_j) \end{align*} and therefore $a \sim b$. \end{proof}
 \end{proof}

In \cite{RTD}, Tucker-Drob shows that for amenable $\Gamma$, the space $\mathrm{A}_{\sim_s}(\Gamma,X,\mu)$ of stable weak equivalence classes is homeomorphic to the space $\mathrm{IRS}(\Gamma)$ of invariant random subgroups of $\Gamma$. Indeed, $\mathrm{type}(a) = \mathrm{type}(b)$ if and only if $a \sim_s b$ and the map $A_{\sim_{s}}(\Gamma,X,\mu) \to \mathrm{IRS}(\Gamma)$ given by $a \mapsto \mathrm{type}(a)$ is a homeomorphism. So we have the following.

\begin{corollary} For amenable $\Gamma$, $a \sim_s b$ if and only if $a \sim b$. \end{corollary}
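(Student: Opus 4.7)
The plan is to simply combine Proposition \ref{prop8} with the result of Tucker-Drob cited immediately above the corollary. One direction requires essentially no work: from the definitions, weak containment is a special case of stable weak containment (take $m=1$ and $\alpha_1 = 1$ in the definition of $\prec_s$), so $a \sim b$ trivially implies $a \sim_s b$ for any countable $\Gamma$, amenable or not. This half does not use amenability.

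For the nontrivial direction, suppose $\Gamma$ is amenable and $a \sim_s b$. By the Tucker-Drob result quoted in the paragraph preceding the corollary, the map $[a]_s \mapsto \mathrm{type}(a)$ descends to a homeomorphism $\mathrm{A}_{\sim_s}(\Gamma,X,\mu) \to \mathrm{IRS}(\Gamma)$, and in particular $a \sim_s b$ is equivalent to $\mathrm{type}(a) = \mathrm{type}(b)$. Hence $\mathrm{type}(a) = \mathrm{type}(b)$. Now Proposition \ref{prop8} (which is the core amenable-case theorem proved just above) asserts that for amenable $\Gamma$ this latter equality already forces $a \sim b$. Combining the two implications gives $a \sim_s b \Longrightarrow a \sim b$ and completes the proof.

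I do not anticipate any obstacle: the whole content of the corollary is packaged into Proposition \ref{prop8} and the cited Tucker-Drob homeomorphism, and the corollary is essentially a one-line transitive chaining of the two. The only thing to make explicit is that type is a well-defined invariant on $\mathrm{A}_{\sim_s}(\Gamma,X,\mu)$, which is part of the Tucker-Drob statement already invoked. One might also observe, as a sanity check, that this yields the (expected) conclusion that for amenable groups the quotient map $\mathrm{A}_\sim(\Gamma,X,\mu) \to \mathrm{A}_{\sim_s}(\Gamma,X,\mu)$ is a bijection, consistent with Theorem \ref{thm1} identifying both spaces with $\mathrm{IRS}(\Gamma)$.
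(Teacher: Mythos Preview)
Your proposal is correct and matches the paper's own argument essentially verbatim: the corollary is stated immediately after the sentence ``$\mathrm{type}(a) = \mathrm{type}(b)$ if and only if $a \sim_s b$'' and is deduced by chaining this Tucker-Drob equivalence with Proposition~\ref{prop8}. The only cosmetic difference is that the paper handles both directions at once via the two biconditionals, whereas you separate out the trivial implication $a \sim b \Rightarrow a \sim_s b$; this is harmless.
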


Moreover, let $x \in X$, $t \in [0,1]$ and $a,b \in \mathrm{A}(\Gamma,X,\mu)$ and consider the action $ta + (1-t)b$ on $t X_1 \sqcup (1-t) X_2$. We have $\mathrm{stab}_{ta + (1-t)b} = \mathrm{stab}_a(x)$ if $x \in X_1$ and $\mathrm{stab}_b(x)$ if $x \in X_2$. Thus for any $H \leq \Gamma$, $\{x: \mathrm{stab}_{ta + (1-t)b}(x) = H\} = \{x \in X_1: \mathrm{stab}_a(x) = H \} \sqcup \{x \in X_2: \mathrm{stab}_b(x) = H\}$ so for any $A \subseteq \mathrm{Sub}(\Gamma)$ we have \begin{align*} (t\mu_1+(1-t)\mu_2)( \{ x: \mathrm{stab}_{ta+(1-t)b}(x)  \in A\}) &= (t\mu_1 + (1-t)\mu_2)( \{x \in X_1: \mathrm{stab}_a(x) \in A \} \\& \sqcup \{x \in X_2: \mathrm{stab}_b(x) \in A\}) \\ & = t \mu(\{x: \mathrm{stab}_a(x) \in H \}) + (1-t) \mu(\{x: \mathrm{stab}_b(x) \in A\}). \end{align*} Therefore $\mathrm{type}(ta+(1-t)b) = t( \mathrm{type}(a)) + (1-t)(\mathrm{type}(b))$ and Theorem \ref{thm1} follows. Note in particular that if $\Gamma$ is amenable then $ta + (1-t)a \sim a$, so for amenable groups $\mathrm{A}_\sim(\Gamma,X,\mu)$ is actually a convex space, not just a weak convex space.\\
\\
It is known (see for example \cite{EisGlas}) that $\mathrm{IRS}(\Gamma)$ is a simplex in $C(\mathrm{Sub}(\Gamma))^*$, the dual of the Banach space $C(\mathrm{Sub}(\Gamma))$ of continuous functions on $\mathrm{Sub}(\Gamma)$. So by the classical Krein-Milman theorem we have that for amenable $\Gamma$, $\cch(\ex(\mathrm{A}_\sim(\Gamma,X,\mu))) = \mathrm{A}_\sim(\Gamma,X,\mu)$. We will prove an analogous result for general $\Gamma$ using other means. Moreover, $\ex(\mathrm{IRS}(\Gamma))$ is precisely the ergodic measures in $\mathrm{IRS}(\Gamma)$ so when $\Gamma$ is amenable, $\ex(\mathrm{A}_\sim(\Gamma,X,\mu))$ is the set of actions with ergodic type.

\section{The structure of $\mathrm{A}_\sim(\Gamma,X,\mu)$ for general $\Gamma$.}

Recall from \cite{K} that $E_0$ is the equivalence relation given by eventual equality on $2^{\mathbb{N}}$ and if $E$ is an equivalence relation on $X$ and $F$ is an equivalence relation on $Y$ then a Borel homomorphism from $E$ to $F$ is a Borel map $f: X \to Y$ such that $x_1 E x_2$ implies $f(x_1) F f(x_2)$. A equivalence relation $E$ on a measure space is said to be strongly ergodic (or $E_0$-ergodic) if for any homomorphism from $E$ to $E_0$, the preimage of some $E_0$-class is conull. By Proposition 5.6 in \cite{CKTD} if $a$ is strongly ergodic then every $b$ with $b \sim a$ is ergodic. In particular, $\frac{1}{2}a + \frac{1}{2}a$ is not ergodic, so $\frac{1}{2}a + \frac{1}{2}a$ is not weakly equivalent to $a$ when $a$ is strongly ergodic. By Theorem 1.2 in \cite{KTsan}, the Bernoulli shift $\Gamma \curvearrowright ([0,1]^\Gamma,\lambda^\Gamma)$ with $\lambda$ Lebesgue measure on $[0,1]$ is strongly ergodic when $\Gamma$ is nonamenable. Thus when $\Gamma$ is nonamenable, $\mathrm{A}_\sim(\Gamma,X,\mu)$ is not a convex space, only a weak convex space. We now prove Theorem \ref{thm2}

\begin{proof} \textbf{\textbf{(of Theorem \ref{thm2})}} Write $A = \mathrm{A}_\sim(\Gamma,X,\mu)$. Let $B = \cch(\ex(A))$ and suppose toward a contradiction that there exists $x \in A \setminus B$. Since $B$ is compact, $d(x,B) > 0$. Let $\alpha = \sup_{y \in A} d(y,B)$ and let $C = \{y \in A: d(y,B) = \alpha\}$. Then $C$ is nonempty, disjoint from $B$ and $C$ is a face of $A$.\\
\\
Let $\mathcal{F}$ be the family of faces of $C$, ordered by reverse inclusion. Suppose $\{F_i\}_{i \in I}$ is a linearly ordered subset of $\mathcal{F}$ and consider $\bigcap_{i \in I} F_i$. If $x,y \in C$ and $0 <t <1$ are such that $tx + (1-t)y \in \bigcap_{i \in I} F_i$, then $x,y \in F_i$ for each $i$ since each $F_i$ is a face. Hence $\bigcap_{i \in I} F_i$ is a face. It is nonempty by compactness. So Zorn's Lemma guarantees there exist minimal elements of $\mathcal{F}$. Let $F$ be such a minimal element.\\
\\
Choose $y \in F$ and suppose toward a contradiction that there exists $y' \in F$ with $y' \notin \cch(\{y\})$. Then $\cch(\{y\})$ is a compact convex set, so letting $G = \left \{z \in  F: d(z,\cch(\{y\})) = \sup_{w \in F} d(w,\cch(\{y\})) \right \}$, $G$ is a nonempty face of $F$ disjoint from $\cch(\{y\})$, contradiction the minimality of $F$. So for all $y \in F$ we have $F \subseteq \cch(\{y\})$. Fix such a $y$. Note that $\cch(\{y\}) = \ch(\{y\})$. We claim that $y$ is an extreme point of $C$. Assuming this, since $C$ is a face of $A$ we have that $y$ is an extreme point of $A$ and we have a contradiction to the hypothesis that $C \cap B = \emptyset$.\\
\\
Suppose first that there do not exist $a,b \in C$ and $0 < t <1$ such that $y = ta + (1-t)b$. Then $y$ is an extreme point of $C$ be definition. So let $a,b \in C$ and $0 < t < 1$ be such that $y = ta + (1-t)b$. We must show that $y \sim a \sim b$. Since $F$ is a face of $C$, we have $a,b \in F$. Thus we can write $a = \sum_{i=1}^n s_i y$ and $b = \sum_{i=1}^k r_i y$ for $s_i,r_i \in [0,1]$. By Proposition \ref{prop2} and associativity we have $y \sim \left( \sum_{i=1}^n t s_i y + \sum_{i=1}^k (1-t)r_i y \right)$. Since $0 < t < 1$, iterating this argument we find that for any $\delta > 0$, there is $m \in \mathbb{N}$ and $(\lambda_i)_{i=1}^m \subseteq [0,1]$ such that $\lambda_i \leq \delta$ for all $i$ and $y \sim \sum_{i=1}^m \lambda_i y$.\\
\\
We claim that this implies $y \sim \kappa y + (1-\kappa)y$ for all $\kappa \in [0,1]$. Note that $\kappa y + (1-\kappa) y$ is isomorphic to $\iota_{\kappa, 1-\kappa} \times y$, where $\iota_{\kappa, 1- \kappa}$ is the trivial action of $\Gamma$ on $(\{0,1\},m_\kappa)$ where $m_\kappa( \{0\}) = \kappa$ and $m_\kappa( \{1\}) = 1-\kappa$. Hence $y$ is a factor of $\kappa y + (1-\kappa)y$ and it thus suffices to show $\kappa y + (1-\kappa)y \prec y$.\\
\\
Let $X_1,X_2$ be two copies of $X$, let $n,k \in \mathbb{N}$, $\epsilon > 0$ and a partition $\mathcal{P} = (P_i)_{i=1}^k$ of $X_1 \sqcup X_2$ be given. As before, we get a partition $\mathcal{P}_1 = (P^1_i)_{i=1}^k$ with $P^1_i = P_i \cap X_1$ of $X_1$ and similarly a partition $\mathcal{P}_2 = (P^2_i)_{i=1}^k$ with $P^2_i = P_i \cap X_2$ of $X_2$. Now, choose $\delta < \frac{\epsilon}{2}$. Then we can find $m$ and $(\lambda_p)_{p=1}^m$ such that $y \sim \sum_{p=1}^m \lambda_p y$ and for some $l \leq m$ we have $\kappa - \frac{\epsilon}{2} \leq \sum_{p=1}^l \lambda_p \leq \kappa$. Let now $X'_p$ be a copy of $X$ for each $p \leq m$, and for $q \in \{0,1\}$ let $P^q_{i,p}$ be the corresponding copy of $P^q_i$ sitting in $X'_p$. Let $\mathcal{Q} = (Q_i)_{i=1}^k$ be the partition of $\bigsqcup_{p=1}^m X'_p$ given by $Q_i = \left( \bigsqcup_{p=1}^l P^1_{i,p} \right) \sqcup \left( \bigsqcup_{p= l+1}^m P^2_{i,p} \right)$. Then for $s \leq n$ and $i,j \leq k$ we have 

\begin{align*} \Bigg \vert (\kappa \mu & + (1-\kappa) \mu)( \gamma_s^{\kappa y + (1-\kappa) y} P_i \cap P_j) - \left ( \sum_{p=1}^m \lambda_p \mu \right ) \left( \gamma_s^{\sum_{p=1}^m \lambda_p y} Q_i \cap Q_j \right) \Bigg \vert\\ & \leq \left \vert \kappa \mu(\gamma_s^y P_i^1 \cap P_j^1) - \left( \sum_{p=1}^l \lambda_p \mu (\gamma_s^y P^1_{i,p} \cap P^1_{j,p} ) \right) \right \vert +  \left \vert (1- \kappa) \mu(\gamma_s^y P_i^2 \cap P_j^2) - \left( \sum_{p=l+1}^m \lambda_p \mu (\gamma_s^y P^2_{i,p} \cap P^2_{j,p} ) \right) \right \vert \end{align*} \begin{align*} & = \left \vert \kappa \mu(\gamma_s^y P_i^1 \cap P_j^1) - \left( \sum_{p=1}^l \lambda_p \right) \mu (\gamma_s^y P^1_i \cap P^1_j ) \right \vert +  \left \vert (1-\kappa) \mu(\gamma_s^y P_i^2 \cap P_j^2) - \left( \sum_{p=l+1}^m \lambda \right) \mu (\gamma_s^y P^2_i \cap P^2_j )\right \vert \\ &= \left \vert \left ( \kappa - \sum_{p=1}^l \lambda_p \right) \mu( \gamma^y_s P^1_i \cap P^1_j) \right \vert + \left \vert \left( (1-\kappa) - \sum_{p=l+1}^m \lambda_p \right) \mu(\gamma_s^y P^2_i \cap P^2_j) \right \vert \\ & \leq \left \vert \left ( \kappa - \sum_{p=1}^l \lambda_p \right)  \right \vert + \left \vert \left( (1-\kappa) - \sum_{p=l+1}^m \lambda_p \right) \right \vert \leq \frac{\epsilon}{2} + \frac{\epsilon}{2} = \epsilon. \end{align*}

Since $y \sim \sum_{p=1}^m \lambda_p y$, $\kappa y + (1-\kappa)y \prec y$ and we are done. \end{proof}

We note that a metrizable topological vector space $V$ is locally convex if and only if its topology is induced by a countable family of seminorms $ \left(| \cdot |^V_n \right)_{n=1}^\infty$. Then $p(v,w) = \sum_{n=1}^\infty \frac{1}{2^n} | v - w |^V_n$ is a compatible metric on $V$, which is easily seen to obey Lemma \ref{lem3}. Thus the technique used to prove Theorem \ref{thm2} works to prove the metrizable case of the classical Krein-Milman theorem using only the convex and metric structure of $V$, not the vector space structure in the form of linear functionals.\\
\\
Before proving Theorem \ref{thm4}, we briefly discuss the ergodic decomposition in the context of weak equivalence classes. Suppose $a \in \mathrm{A}(\Gamma,X,\mu)$ and $a = \int_Z a_z d \eta(z)$ is the ergodic decomposition of $a$, that is to say we have a factor map $\pi: (X,\mu) \to (Z,\eta)$ such that if $\mu = \int_Z \mu_z d \eta(z)$ is the disintegration of $\mu$ over $(Z,\eta)$ via $\pi$ then $\mu_z( \pi^{-1}(z) ) = 1$ and $\Gamma \curvearrowright^a (\pi^{-1}(z),\mu_z)$ is isomorphic to $a_z$. Furthermore, the assignment $z \mapsto \mu_z$ from $(Z,\eta) \to M_a(X)$ is Borel, where $M_a(X)$ is the space of $a$-invariant probablity measures on $X$ (we may assume here that $X$ is a Polish space). Recall that $A^*_\sim(\Gamma)$ is the space of weak equivalence classes of all measure-preserving actions of $\Gamma$, including those actions on finite space. $A^*_\sim(\Gamma)$ is topologized using the exact same metric as we use to topologize $\mathrm{A}_\sim(\Gamma,X,\mu)$. We would like to conclude that the assignment $z \mapsto [a_z]$ is measurable from $(Z,\eta)$ to $A^*_\sim(\Gamma)$, where $[a_z]$ is the weak equivalence class of $a_z$. This is a consequence of the following lemma.

\begin{lemma} \label{lem10} Let $\Gamma \curvearrowright^a Y$ be a Borel action of $\Gamma$ on a Polish space $Y$. Then the map $\Theta$ from $M_a(Y)$ to $A^*_\sim(\Gamma)$ given by $\nu \mapsto [a_\nu]$ is Borel, where $[a_\nu]$ is the weak equivalence class of the measure preserving action $a_\nu = \Gamma \curvearrowright^a (Y,\nu)$. \end{lemma}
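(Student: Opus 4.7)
The plan is to factor the map $\Theta$ through the intermediate maps $\nu \mapsto C_{n,k}(a_\nu)$ into the hyperspace $\mathcal{K}([0,1]^{n\times k\times k})$, one for each $n,k$. The key point is to exhibit a single countable family of partitions which is dense for every $\nu \in M_a(Y)$ simultaneously, since the collection of $\nu$-measurable partitions depends on $\nu$.

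First I would fix a countable Boolean algebra $\mathcal{B}$ of Borel subsets of $Y$ generating the Borel $\sigma$-algebra, and let $\mathcal{L}_k$ be the countable collection of all partitions of $Y$ into $k$ pieces from $\mathcal{B}$. Given any $\nu \in M_a(Y)$ and any Borel partition $(A_1,\ldots,A_k)$ of $Y$, a standard approximation argument (approximate each $A_i$ by an element of $\mathcal{B}$ in $\nu$-measure, then apply Boolean operations to turn the approximants into an actual partition) shows that elements of $\mathcal{L}_k$ are $\nu$-dense in the space of all $k$-partitions. Since the coordinates of $M^{\mathcal{A}}_{n,k}(a_\nu)$ are Lipschitz in $\nu$-measure of symmetric differences of the pieces, this gives
\[ C_{n,k}(a_\nu) = \overline{\{M^{\mathcal{A}}_{n,k}(a_\nu) : \mathcal{A} \in \mathcal{L}_k\}} \]
for every $\nu \in M_a(Y)$.

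Next, for each fixed $\mathcal{A} \in \mathcal{L}_k$ the map $\nu \mapsto M^{\mathcal{A}}_{n,k}(a_\nu)$ is Borel, since each coordinate $\nu \mapsto \nu(\gamma_p^a A_q \cap A_r)$ is a standard Borel function on $M_a(Y)$. Enumerating $\mathcal{L}_k = (\mathcal{A}_j)_{j=1}^\infty$ and writing $f_j(\nu) = M^{\mathcal{A}_j}_{n,k}(a_\nu)$, I would then check that $\nu \mapsto C_{n,k}(a_\nu)$ is Borel into $\mathcal{K}([0,1]^{n\times k\times k})$ by showing that $\nu \mapsto d_H(C_{n,k}(a_\nu), K_0)$ is Borel for every compact $K_0$ in the ambient cube. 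Picking a countable dense subset $(y_m)$ of $K_0$ and using that $(f_j(\nu))_j$ is dense in $C_{n,k}(a_\nu)$,
\[ d_H(C_{n,k}(a_\nu), K_0) = \max\left( \sup_j d(f_j(\nu), K_0),\; \sup_m \inf_j d(y_m, f_j(\nu)) \right), \]
which is a countable combination of Borel functions of $\nu$, hence Borel.

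Finally, the metric on $A^*_\sim(\Gamma)$ is a convergent weighted sum of the Hausdorff distances in the various $\mathcal{K}([0,1]^{n\times k\times k})$, so the product map $\nu \mapsto (C_{n,k}(a_\nu))_{n,k}$ is Borel and $\Theta$ is obtained by postcomposing with a continuous map. The only real subtlety is the opening density step: one must produce a single countable collection of partitions that approximates arbitrary $\nu$-measurable partitions for every $\nu$ at once, and this is the reason for working with Borel partitions from a fixed countable generating Boolean algebra rather than with $\nu$-measurable partitions.
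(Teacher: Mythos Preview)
Your proof is correct and follows essentially the same approach as the paper's. Both arguments hinge on the identical key observation: a countable Boolean algebra generating the Borel $\sigma$-algebra yields a countable family of partitions that is dense in $\mathrm{MALG}(Y,\rho)$ for \emph{every} $\rho$, so the sets $C_{n,k}(a_\nu)$ are closures of a fixed countable family of Borel-in-$\nu$ points. The paper then directly writes $\Theta^{-1}$ of a subbasic set as a countable Boolean combination of Borel sets, whereas you factor $\Theta$ through the hyperspace-valued maps $\nu \mapsto C_{n,k}(a_\nu)$ and verify each of these is Borel via the Hausdorff-distance formula; this is purely an organizational difference.
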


\begin{proof} Fix a measure $\nu \in M_a(Y)$ and consider $\Theta^{-1}(U)$ where \[U = \{ [a] \in A_{\sim}^*(\Gamma): d_H( C_{n,k}(a_\nu),C_{n,k}(a)) < \epsilon \mbox{ for all }n,k \leq N \} \] for some $N \in \mathbb{N}$ and $\epsilon > 0$, so $U$ is a basic open neighborhood of $\Theta(\nu) = a_\nu$. Since \[ U = \bigcup_{m=1}^\infty \bigcap_{n,k =1}^N \left \{ [b] \in A_{\sim}^*(\Gamma): d_H(C_{n,k}(a_\nu), C_{n,k}(b) ) \leq \epsilon - \frac{1}{m}  \right\}, \] it suffices to show $\Theta^{-1}(V)$ is Borel for a set $V$ of the form \[V = \{ [b] \in A_{\sim}^*(\Gamma): d_H( C_{n,k}(a_\nu), C_{n,k}(b) ) \leq r \}. \] Fixing $n$ and $k$ we write $C(b)$ for $C_{n,k}(b)$. Now, let $K$ and $L$ be compact subsets of a compact Polish space $W$ with metric $p$, let $D_K$ be dense in $K$ and $D_L$ be dense in $L$. We have \begin{align*} d_H(K,L) \leq r & \iff \max_{x \in K} \inf_{y \in L} p(x,y) \leq r \mbox{ and } \max_{y \in L} \inf_{x \in K} p(y,x) \leq r \\ & \iff ( \forall x \in K )(\forall \delta > 0 )( \exists y \in L)( p(x,y) < r + \delta )\\  &\hspace{2 in} \wedge ( \forall y \in L )( \forall \delta >  0)( \exists x \in K)( p(y,x) < r + \delta ) \\ & \iff (\forall x \in D_K)( \forall \delta > 0)( \exists y \in D_L)( p(x,y) < r + \delta) \\ & \hspace{2 in}\wedge (\forall y \in D_L)(\forall \delta > 0) (\exists y \in D_L)( p(y,x) < r + \delta ) \end{align*}  
If $\mathbf{L}$ is a countable algebra generating the Borel $\sigma$-algebra $\mathbf{B}(Y)$ of $Y$, then $\mathbf{L}$ is dense in $\mathrm{MALG}(Y,\rho)$ for any Borel probability measure $\rho$ on $Y$. Regarding a partition of $Y$ into $k$ pieces as an element of $\mathbf{B}(Y)^N$ and considering $\mathbf{L}^k$, we see that there exists a fixed countable family $(\mathcal{A}_m)_{m=1}^\infty$ of partitions of $Y$ such that for any Borel probability measure $\rho$ on $Y$, $(\mathcal{A}_m)_{m=1}^\infty$ is dense in the set of $k$-partitions of $X$ with topology inherited from $\mathrm{MALG}(Y,\rho)$. We may further assume that each element of each $\mathcal{A}_m$ is clopen. This implies that the set $(M^{\mathcal{A}_m}(a_\rho))_{m=1}^\infty$ is dense in $C(a_\rho)$ for any Borel probability measure $\rho$. Therefore we have \begin{align*} V & = \left( \bigcap_{m=1}^\infty \bigcap_{l = 1}^\infty \bigcup_{i=1}^\infty \left \{ b \in A^*_\sim(\Gamma):  \left \vert M^{\mathcal{A}_m}(a_\nu) - M^{\mathcal{A}_i}(b) \right \vert < r + \frac{1}{l} \right \} \right) \\  &\hspace{1 in}  \cap \left( \bigcap_{m=1}^\infty \bigcap_{l = 1}^\infty \bigcup_{i=1}^\infty \left \{ b \in A^*_\sim(\Gamma):  \left \vert M^{\mathcal{A}_i}(a_\nu) - M^{\mathcal{A}_m}(b) \right \vert < r + \frac{1}{l} \right \} \right). \end{align*}
Now, $|M^{\mathcal{A}_i}(a_\nu) - M^{\mathcal{A}_m}(a_{\rho})| < s$ if and only if $|\nu(\gamma^a A_i^j \cap A_i^t) - \rho( \gamma^a A_m^i \cap A_m^t)| <s$ for all $A_i^j,A_i^t \in \mathcal{A}_i$ and $A_m^i,A_m^t \in \mathcal{A}_m$. Since for any pair $J_1,J_2 \subseteq Y$ the set $\{\rho: |\nu(J_1) - \nu(J_2)| < s\}$ is Borel, we see \[\Theta^{-1} \left( \left \{ b \in A^*_\sim(\Gamma):  \left \vert M^{\mathcal{A}_i}(a_\nu) - M^{\mathcal{A}_m}(b) \right \vert < r + \frac{1}{l} \right \} \right) \] is Borel and consequently $\Theta^{-1}(V)$ is Borel. \end{proof}

We now prove Theorem \ref{thm4}

 \begin{proof}\textbf{\textbf{(of Theorem \ref{thm4})}} Let $\Theta: Z \to A^*_\sim(\Gamma)$ be the map sending each point in $z$ to the weak equivalence class $[a_z]$, so $\Theta$ is measurable by Lemma \ref{lem10}. Suppose towards a contradiction that the theorem fails. Then for every set $Z' \subseteq Z$ with $\eta(Z') = 1$, there is more than one weak equivalence class in the set $\{[a_z]: z \in Z' \}$. Equivalently, the measure $\Theta_* \eta$ on $A^*_\sim(\Gamma)$ is not supported on a single point. We can thus split $A^*_\sim(\Gamma)$ into two disjoint sets $Y_1,Y_2$ such that $0 < \Theta_*\eta(Y_1), \Theta_*\eta(Y_2) <1$. Letting $A_i = \Theta^{-1}(Y_i)$, we get disjoint measurable sets $A_1,A_2 \subseteq Z$ such that $0 < \eta(A_1), \eta(A_2) < 1$ and for all $z \in A_1$ and all $w \in A_2$ we have that $z \nsim w$.\\
\\
Recall that for a measure-preserving action $b$ of $\Gamma$ and $n,k \in \mathbb{N}$ the set $C_{n,k}(a) \subseteq [0,1]^{n \times k \times k}$ was defined in Section \ref{sec3}. 

\begin{lemma}\label{lem9} For any action $b$ of $\Gamma$ on a probability space $(Y,\nu)$, we have $\cch(C_{n,k}(b)) \subseteq C_{n,k}(\iota \times b)$. \end{lemma}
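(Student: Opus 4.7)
The plan is to show first that every finite rational (then real) convex combination of matrices of the form $M^{\mathcal{A}}_{n,k}(b)$ is realized as $M^{\mathcal{B}}_{n,k}(\iota \times b)$ for some partition $\mathcal{B}$ of the underlying space of $\iota \times b$, and then take closures.

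Fix the underlying space $(Z,\eta)$ of $\iota$ (so that $\iota \times b$ acts on $(Z \times Y, \eta \times \nu)$ by $\gamma^{\iota \times b}(z,y) = (z,\gamma^b y)$). Given partitions $\mathcal{A}_i = \{A^i_1,\ldots,A^i_k\}$ of $Y$ for $i = 1,\ldots,m$ and weights $\alpha_i \geq 0$ with $\sum_{i=1}^m \alpha_i = 1$, choose a measurable partition $Z = Z_1 \sqcup \cdots \sqcup Z_m$ with $\eta(Z_i) = \alpha_i$. Define a $k$-partition $\mathcal{B} = \{B_1,\ldots,B_k\}$ of $Z \times Y$ by
\[
B_q \;=\; \bigsqcup_{i=1}^m Z_i \times A^i_q.
\]
Since $\gamma_p^{\iota \times b}(Z_i \times A^i_q) = Z_i \times \gamma_p^b A^i_q$ and the pieces $Z_i$ are disjoint, a direct computation gives
\[
(\eta \times \nu)\bigl(\gamma_p^{\iota \times b} B_q \cap B_r\bigr) \;=\; \sum_{i=1}^m \alpha_i\, \nu\bigl(\gamma_p^b A^i_q \cap A^i_r\bigr),
\]
so $M^{\mathcal{B}}_{n,k}(\iota \times b) = \sum_{i=1}^m \alpha_i M^{\mathcal{A}_i}_{n,k}(b)$. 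This shows that every element of $\mathrm{ch}\bigl(\{M^{\mathcal{A}}_{n,k}(b) : \mathcal{A} \text{ a } k\text{-partition of } Y\}\bigr)$ lies in the set of which $C_{n,k}(\iota \times b)$ is the closure, hence in $C_{n,k}(\iota \times b)$.

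For the passage to the closed convex hull, observe that $[0,1]^{n \times k \times k}$ is finite-dimensional, so by Carathéodory's theorem the convex hull of a compact set is compact (in particular closed), and therefore $\mathrm{cch}(C_{n,k}(b)) = \mathrm{ch}(C_{n,k}(b))$. A typical point of $\mathrm{ch}(C_{n,k}(b))$ is $\sum_{i=1}^m \alpha_i c_i$ with each $c_i \in C_{n,k}(b)$, and by definition each $c_i$ is a limit $c_i = \lim_{j \to \infty} M^{\mathcal{A}_i^{(j)}}_{n,k}(b)$ for some sequence of partitions. The previous paragraph gives, for each $j$, a partition $\mathcal{B}^{(j)}$ with $M^{\mathcal{B}^{(j)}}_{n,k}(\iota \times b) = \sum_i \alpha_i M^{\mathcal{A}_i^{(j)}}_{n,k}(b)$, and passing to the limit in $j$ shows that $\sum_i \alpha_i c_i$ is a limit of points in $\{M^{\mathcal{B}}_{n,k}(\iota \times b)\}$, hence lies in $C_{n,k}(\iota \times b)$.

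There is no serious obstacle; the only thing to be careful about is that we are free to choose the underlying probability space $(Z,\eta)$ of $\iota$, so partitioning $Z$ into pieces of the prescribed masses $\alpha_i$ is unproblematic, and the weak-equivalence class of $\iota \times b$ does not depend on this choice. The routine verification is the identity for $(\eta \times \nu)(\gamma_p^{\iota \times b} B_q \cap B_r)$; everything else is a standard density/closure argument in a finite-dimensional Euclidean space.
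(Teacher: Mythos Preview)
Your proof is correct and follows essentially the same strategy as the paper's: realize a finite convex combination $\sum_i \alpha_i M^{\mathcal{A}_i}_{n,k}(b)$ as a single $M^{\mathcal{B}}_{n,k}$ for a partition $\mathcal{B}$ of the product space, then pass to the closure. The only cosmetic difference is that the paper builds $\mathcal{B}$ on the disjoint sum $\sum_i \alpha_i b$ and then invokes the fact that this is a factor of $\iota \times b$, whereas you partition $Z$ directly and work inside $Z \times Y$; these are the same construction under the obvious identification, and your explicit use of Carath\'eodory to reduce $\cch$ to $\ch$ is a clean way to handle the closure step that the paper leaves more implicit.
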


\begin{proof} Write $C_{n,k}(b) = C(b)$. Suppose $x \in \cch(C(b))$. Then we can find points $(x_i)_{i=1}^\infty$ such that $\lim_{i \to \infty} x_i = x$ and each $x_i$ has the form $x_i = \sum_{j=1}^{j(i)} \alpha_i^j x_i^j$ for $(x_i^j)_{j=1}^{j(i)} \subseteq C(b)$ and $(\alpha_i^j)_{j=1}^{j(i)} \subseteq [0,1]$ with $\sum_{j=1}^{j(i)} \alpha^i_j = 1$ for each $i$. Without loss of generality we may assume that each $x_i^j$ has the form $M^{\mathcal{A}_i^j}(b)$ for a partition $\mathcal{A}_i^j = (A_{i,l}^j)_{l=1}^k$ of $Y$ into $k$ pieces. Fixing $i$ consider the action $\sum_{j=1}^{j(i)} \alpha_i^j b$ on the space $\left( \bigsqcup_{j=1}^{j(i)} Y_j, \sum_{j=1}^{j(i)} \alpha_i^j \nu_j \right)$, where each $(Y_j,\nu_j)$ is a copy of $(Y,\nu)$. Let $\mathcal{B} = (B_l)_{l=1}^k$ be the partition of $\bigsqcup_{j=1}^{j(i)} Y_j$ given by letting $B_l = \bigsqcup_{j=1}^{j(i)} A_{i,l}^j$, where $A_{i,l}^j$ sits inside the $j$ copy of $Y$. For any $p \leq n$ and $l,m \leq k$ and $x \in [0,1]^{n \times k \times k}$ let $(x)_{p,l,m}$ be the $p,l,m$ coordinate of $x$. We then have

\begin{align*} \left( M^{\mathcal{B}} \left ( \sum_{j=1}^{j(i)} \alpha_i^j b \right) \right)_{p,l,m} &= \left( \sum_{j=1}^{j(i)} \alpha_i^j \nu_j \right) \left ( \gamma_p^{\sum_{j=1}^{j(i)} \alpha_i^j b} B_l \cap B_m \right) \\ & = \sum_{j=1}^{j(i)} \left( \alpha_i^j \nu_j (\gamma_p^b A_{i,l}^j \cap A_{i,m}^j)\right) \\ & = \sum_{j=1}^{j(i)} \alpha_i^j \left( M^{\mathcal{A}_i^j}(b) \right)_{p,l,m} \end{align*}

Therefore \[ M^{\mathcal{B}} \left ( \sum_{j=1}^{j(i)} \alpha_i^j b \right) = \sum_{j=1}^{j(i)} \alpha_i^j \left( M^{\mathcal{A}_i^j}(b) \right) = x_i. \]

We have shown that $x_i \in C \left( \sum_{j=1}^{j(i)} \alpha_i^j b \right)$. Since $\sum_{j=1}^{j(i)} \alpha_i^j b$ is a factor of $b \times \iota$, we have $x_i \in C(b \times \iota)$. Since $\lim_{i \to \infty} x_i = x$ and $C(b \times \iota)$ is closed, the lemma follows. \end{proof}

It is clear that for any two measure-preserving actions $b,c$ we have $b \prec c$ if and only if $C_{n,k}(b) \subseteq C_{n,k}(c)$ for all $n,k$. We claim that there are disjoint subsets $A_3, A_4 \subseteq Z$ of positive measure such that for some pair $n_0,k_0$, every $z \in A_3$ and every $w \in A_4$ we have $C_{n_0,k_0}(a_z) \nsubseteq \cch(C_{n_0,k_0}(a_w))$. For $z \in A_3$ let $R_z = \{ w \in A_2: a_z \nprec a_w \}$. Since $a_z$ is ergodic, $a_z \prec a_w \times \iota$ implies $a_z \prec a_w$. Therefore $R_z = \{w \in A_2: a_z \nprec a_w \times \iota \}$. \\
\\
Assume first that there is a set $D_3 \subseteq A_1$ with $\eta(D_3) > 0$ such that for each $z \in D_3$ we have $\eta(R_z) > 0$. Write $\hat{K}$ for $\cch(K)$. By Lemma \ref{lem9} we can write $R_z = \bigcup_{n,k = 1}^\infty R_z^{n,k}$ where $R_z^{n,k} =  \left \{ w \in A_2: C_{n,k}(a_z) \nsubseteq \widehat{C_{n,k}(a_w)} \right \}$. Thus for each $z$ there is a lexicographically least pair $(n_z,k_z)$ such that $\eta(R_z^{n_z,k_z}) > 0$. Therefore there is a pair $n_0,k_0$ and a set $D_4 \subseteq D_3$ such that $\eta(D_4) > 0$ and for all $z \in D_4$ we have $\eta(R_z^{n_0,k_0}) > 0$. Fixing $n_0$ and $k_0$ we write $C(b)$ for $C_{n_0,k_0}(b)$. Let $(w_j)_{j=1}^\infty \subseteq A_2$ be a sequence of points such that the family $\left(\widehat{C(a_{w_j})} \right)_{j=1}^\infty$ is dense in the space $\left \{ \widehat{C(a_w)}: w \in A_2 \right\}$ with respect to the Hausdorff metric $d_H$ on the space on compact subsets of $[0,1]^{n_0 \times k_0 \times k_0}$. Let then $F_{j,l} = \left\{ w \in A_2: d_H\left( \widehat{C(a_w)},\widehat{C(a_{w_j})} \right) < \frac{1}{l} \right\}$.\\
\\
Fix $z \in D_4$ and choose $w \in R_z^{n_0,k_0}$. By hypothesis there is $\epsilon > 0$ such that $C(a_z) \nsubseteq B_{\epsilon} \left(\widehat{C(a_w)} \right)$, where $B_{\epsilon}(K)$ denotes the ball of radius $\epsilon$ around $K$. Then if we choose $j$ so that $d_H \left(\widehat{C(a_{w_j})},\widehat{C(w)} \right) < \frac{\epsilon}{2}$ and $l$ so that $\frac{1}{l} < \frac{\epsilon}{2}$ we have $w \in F_{j,l} \subseteq R_z^{n_0,k_0}$. Hence there is a subset $\mathcal{J} \subseteq \mathbb{N}^2$ such that $R_z^{n_0,k_0} = \bigcup_{(j,l) \in \mathcal{J}} F_{j,l}$. So for each $z$ we can choose a lexicographically least pair $(j_z,l_z)$ such that $\eta(F_{j_z,l_z}) > 0$ and $F_{j_z,l_z} \subseteq R_z^{n_0,k_0}$. There is then a pair $(j_0,l_0)$ and a set $E_3 \subseteq D_3$ with $\eta(E_3) > 0$ such that $\eta(F_{j_0,l_0}) > 0$ and for all $z \in E_3$ and all $w \in F_{j_0,l_0}$ we have $C(a_z) \nsubseteq \widehat{C(a_w)}$. So take $A_3 = E_3$ and $A_4 = F_{j_0,l_0}$. Thus we are left with the case $\eta(R_z) = 0$ for almost all $z \in A_1$. Then for almost all $w \in A_2$ and almost all $z \in A_1$ we must have $a_w \nprec a_z$, so a symmetric argument gives the claim.\\
\\
Given a (real) topological vector space $V$, we say a hyperplane in $V$ is a set of the form $H_{\ell,\alpha} = \{v \in V: \ell(v) = \alpha \}$ for some continuous linear functional $\ell$ and $\alpha \in \mathbb{R}$. Given disjoint compact subsets $W_1,W_2 \subseteq V$ we say that $H_{\ell,\alpha}$ separates $W_1$ from $W_2$ if $W_1 \subseteq \{v \in V: \ell(v) < \alpha \}$ and $W_2 \subseteq \{v \in V: \ell(v) > \alpha \}$.

\begin{lemma} Let $S \subseteq \mathbb{R}^n$ be compact. Then there is a countable family $(H_i)_{i=1}^\infty$ of hyperplanes such that for any $x \in S$ and any compact convex $W \subseteq S$ there is $i$ so $H_i$ separates $\{x\}$ from $W$. \end{lemma}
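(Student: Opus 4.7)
The plan is straightforward: the statement is really a separation result, so take a countable family of hyperplanes that is dense in the natural parameter space, and invoke Hahn--Banach in $\mathbb{R}^n$ together with compactness to verify that density suffices. (Note that the statement is only nontrivial when $x \notin W$, since if $x \in W$ no hyperplane separates $\{x\}$ from $W$; this case is presumably implicit in the application.)

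First I would parametrize hyperplanes. Every hyperplane in $\mathbb{R}^n$ has the form $H_{n,\alpha} = \{v \in \mathbb{R}^n : \langle n, v \rangle = \alpha\}$ with $n \in S^{n-1}$ and $\alpha \in \mathbb{R}$, and since $S^{n-1} \times \mathbb{R}$ is second countable one can choose a countable dense subset $(n_i, \alpha_i)_{i=1}^\infty$ of it. Let $H_i = H_{n_i, \alpha_i}$ be the corresponding family of hyperplanes.

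Next I would verify this family works. Fix $x \in S$ and a compact convex $W \subseteq S$ with $x \notin W$. By the Hahn--Banach separation theorem in $\mathbb{R}^n$ (applied to the disjoint compact convex sets $\{x\}$ and $W$), there exist $n \in S^{n-1}$, $\alpha \in \mathbb{R}$ and $\epsilon > 0$ such that
\[ \langle n, x \rangle < \alpha - \epsilon \quad \text{and} \quad \langle n, w \rangle > \alpha + \epsilon \text{ for all } w \in W. \]
Set $M = \sup_{v \in S} \|v\| < \infty$, which is finite by compactness of $S$. For any $(n', \alpha') \in S^{n-1} \times \mathbb{R}$ and any $v \in S$, Cauchy--Schwarz gives
\[ |\langle n', v\rangle - \langle n, v\rangle| \leq M \|n' - n\|, \]
so if $\|n_i - n\| < \epsilon/(3M)$ and $|\alpha_i - \alpha| < \epsilon/3$ then $\langle n_i, x\rangle < \alpha_i$ and $\langle n_i, w\rangle > \alpha_i$ for every $w \in W$; that is, $H_i$ separates $\{x\}$ from $W$. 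Density of $(n_i, \alpha_i)$ guarantees that such an $i$ exists, completing the proof.

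There is essentially no obstacle here beyond bookkeeping; the only nontrivial input is Hahn--Banach (or, equivalently, the supporting hyperplane theorem for convex sets in $\mathbb{R}^n$), and the compactness of $S$ is used solely to get the uniform bound $M$ converting $L^2$-closeness of normals to uniform closeness of the associated linear functionals on $S$.
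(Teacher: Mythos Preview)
Your proof is correct and follows essentially the same approach as the paper: both take a countable dense family of hyperplane parameters, invoke Hahn--Banach to get a strict separator, and then use compactness of $S$ to perturb to a member of the countable family. The only cosmetic differences are that the paper parametrizes by linear functionals dense in the sup norm on $S$ together with rational levels (rather than unit normals and a dense set of pairs), and measures the gap via distances to the separating hyperplane rather than via a functional margin $\epsilon$; as a minor stylistic note, you overload the symbol $n$ for both the ambient dimension and the normal vector.
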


\begin{proof} Let $(\ell_j)_{j=1}^\infty$ be a countable set of linear functionals which is dense in the $\sup$ norm on $S$. Enumerate $\mathbb{Q}$ as $(q_m)_{m=1}^\infty$ and let $H_{j,m} = \{s \in S: \ell_j(s) = q_m\}$. Given $x$ and $W$, by Hahn-Banach find a linear functional $\ell$ and $\alpha \in \mathbb{R}$ so that $H = H_{\ell,\alpha}$ separates $x$ from $W$. Let $r = \min \left( \inf_{h \in H} ||x -h ||,  \inf_{ \substack{ h \in H, \\ w \in W}} || h - w|| \right)$ so $r > 0$. Then choose $m$ so $|q_m - \alpha| < \frac{r}{2}$ and $j$ so $\sup_{s \in S} |\ell(s) - \ell_j(s)| < \frac{r}{2}$. Then $H_{j,m}$ separates $x$ from $W$. \end{proof}

Now take $S = [0,1]^{n_0 \times k_0 \times k_0}$ and fix a family $(H_i)_{i=1}^\infty$ of hyperplanes as in the lemma. Since $\widehat{C(a_w)}$ is compact convex for each $w \in A_4$ and for all $z \in A_3$ we have $C(a_z) \nsubseteq \widehat{C(a_w)}$, for each pair $(z,w) \in A_3 \times A_4$ there is an index $i(z,w)$ and a point $x_{z,w} \in C(a_z)$ such that $H_{i(z,w)}$ separates $x_{z,w}$ from $\widehat{C(a_w)}$. Fix $z \in A_3$. Taking $(w_j)_{j=1}^\infty$ as before, for $(j,l) \in \mathbb{N}^2$ let $G_{j,l} = \left \{w \in A_4: d_H\left( \widehat{C(a_w)}, \widehat{C(a_{w_j})}\right) < \frac{1}{l} \right \}$. Choosing $w \in A_4$, let $\epsilon = d_H \left( \widehat{C(w)}, H_{i(z,w)} \right)$ so $\epsilon > 0$. Finding $j_{z,w}$ so $d_H \left( \widehat{C(w_j)}, \widehat{C(w)} \right) < \frac{\epsilon}{2}$ and $l_{z,w}$ so $\frac{1}{l} < \frac{\epsilon}{2}$ we have $w \in G_{j_{z,w},l_{z,w}}$ and $H_{i(z,w)}$ separates $x_{z,w}$ from $\widehat{C(u)}$ for all $u \in G_{j_{z,w},l_{z,w}}$. Then we have $A_4 = \bigcup_{\substack{(j_{z,w},l_{z,w}): \\w \in A_4 }} G_{j_{z,w},l_{z,w}}$ so we can find $w_0$ so that $\eta\left(G_{j_{z,w_0},l_{z,w_0}} \right) > 0$. Let then $G_z = G_{j_{z,w_0},l_{z,w_0}}$, $x_z = x_{z,w_0}$ and $i(z) = i(z,w_0)$ so that $H_{i(z)}$ separates $x_z$ from $\widehat{C(u)}$ for all $u \in G_z$. Since the $G_z$ were chosen from a countable family, we can find a set $A_5 \subseteq A_3$ of positive measure such that $G_z = G$ is the same for all $z \in A_5$. We can then find an index $i$ and a set $A_6 \subseteq A_5$ of positive measure such that for all $z \in K$, $H_i = H$ separates $x_z$ from $\widehat{C(u)}$ for all $u \in G$. $H$ splits $[0,1]^{n \times k \times k}$ into two closed convex sets $H_+$ and $H_-$, where $H_+$ contains the $x_z$ and $H_-$ contains the $C(u)$.\\
\\
For $S \subseteq Z$ with $\eta(S) > 0$ let $\eta_S = \frac{ \eta \upharpoonright S}{\eta(S)}$ be normalized measure on $S$. By Lemma \ref{lem11} we have $C \left ( \int_G a_u d \eta_G(u)  \right ) \subseteq \cch \left( \bigcup_{u \in G} C(u) \right) \subseteq H_-$. Write $A_6 = \bigcup_{p = 1}^\infty A_6^p$, where $A_6^p = \left \{ z \in A_6: d_H( x_z, H) \geq \frac{1}{p} \right \}$ and find $p$ so $\eta(A_6^p) > 0$. Letting $K = A_6^p$, for all $z \in K$, $x_z$ is an element of the closed convex set $H_+^p = \{ y \in H_+: d_H(y,H) \geq \frac{1}{p} \}$ and $H_+^p$ is disjoint from $H_-$. We have $\int_K x_z d \eta_K(z) \in C \left ( \int_K a_z d \eta_K(z) \right )$ and $\int_K x_z d \eta_K(z) \in H_+^p$. Since $C \left ( \int_G a_u d \eta_G(u)  \right ) \subseteq H_-$ we see that $C \left ( \int_K a_z d \eta_K(z) \right ) \nsubseteq C \left ( \int_G a_u d \eta_G(u)  \right )$ and it follows that $\int_K a_z d \eta_K(z) \nsim  \int_G a_u d \eta_G(u)$. Let $L_1 = K, L_2 = G$ then there is $i \in \{1,2\}$ with $\int_{L_i} a_z d \eta_{L_i}(z) \nsim a$. Since $0 < \eta(L_i) < 1$, we can write \[a = \eta(L_i) \left ( \int_{L_i} a_z d \eta_{L_i}(z) \right) + \eta( Z \setminus L_i) \left ( \int_{Z \setminus L_i} a_z d \eta_{Z \setminus L_i}(z) \right) \] which contradicts our assumption that $a$ is an extreme point.  \end{proof}

We now prove Theorem \ref{thm5}. Recall that the uniform topology on $\mathrm{Aut}(X,\mu)$ is given by the metric $d_u(T,S) = \mu(\{x: Tx \neq Sx \})$. If $\mathcal{P} = \{P_1,\ldots,P_p \}$ is a partition of a space on which $\mathbb{F}_N$ acts by an action $a$, $J \subseteq \mathbb{F}_N$ is finite and $\tau: J \to p$ let $P^a_{\tau} = \bigcap_{\gamma \in J} \gamma^a P_{\tau(\gamma)}$. 

\begin{proof} \textbf{\textbf{(of Theorem \ref{thm5})}} Let $a$ be a free action of $\mathbb{F}_N$. By replacing $a$ with $a \times \iota$ if necessary, we may assume that for each $n,k$ the set $C_{n,k}(a)$ is closed and convex. Fix integers $n_0$ and $k_0$ and $\epsilon > 0$. It is enough to find a free ergodic action $b$ of $\mathbb{F}_N$ such that for all $n \leq n_0$ and $k \leq k_0$ we have $d_H (C_{n,k}(a),C_{n,k}(b)) < \epsilon$. Let $\{ \gamma_1,\ldots,\gamma_{n_0} \} = F_0$ be the finite subset of $\mathbb{F}_N$ under consideration. Let $s = s_{\mathbb{F}_N}$ be the Bernoulli shift of $\mathbb{F}_N$ acting on $\left( 2^{\mathbb{F}_N}, \nu \right)$ where $\nu$ is the product measure. For any action $c$ of $\mathbb{F}_N$ on $(X,\mu)$ and $\gamma \in \mathbb{F}_N$ we have \[ \{ (x,y) \in X \times 2^{\mathbb{F}_N}: \gamma^{c \times s} (x,y) \neq \gamma^{a \times x}(x,y) \} = \{ x \in X: \gamma^c x \neq \gamma^a x \} \times Y \] and hence \[ (\mu \times \nu)(\{ (x,y) \in X \times 2^{\mathbb{F}_N}: \gamma^{c \times s} (x,y) \neq \gamma^{a \times x}(x,y) \}) = \mu(\{ x \in X: \gamma^c x \neq \gamma^a x \}). \] 

Assume $d_u(\gamma^a,\gamma^c) < \frac{\epsilon}{16}$ for all $\gamma \in F_0$. Then for any measurable partition $\mathcal{A} = A_1,\ldots,A_k$ of $X \times 2^{\mathbb{F}_N}$, all $\gamma \in F_0$ and all $i,j \leq k$ we have \[| (\mu \times \nu)(\gamma^{a \times s} A_i \cap A_j) - (\mu \times \nu)(\gamma^{c \times s} A_i \cap A_j)| < \frac{\epsilon}{16} \] for all $\gamma \in F_0$. In the notation of Section \ref{sec3}, $\rho \left(M_{n,k}^\mathcal{A}(a \times s),M_{n,k}^\mathcal{A}(c \times s) \right ) < \frac{\epsilon}{16}$ where $\rho$ is the supremum metric on $[0,1]^{n \times k \times k}$. Choose a finite collection $\mathscr{L}$ of measurable subsets of $X \times 2^{\mathbb{F}_N}$ such that for every measurable partition $\mathcal{A}$ of $X \times 2^{\mathbb{F}_N}$ there is a partition
$\mathcal{B} \subseteq \mathscr{L}$ such that $\rho\left( M^\mathcal{A}_{n,k}(a \times s), M^\mathcal{B}_{n,k}(a \times s) \right) < \frac{\epsilon}{16}$. Then for every such $\mathcal{A}$ there exists $\mathcal{B} \subseteq \mathscr{L}$ such that $\rho \left( M^\mathcal{A}_{n,k}(c \times s), M^\mathcal{B}_{n,k}( c \times s) \right) < \frac{3 \epsilon}{16}$. \\
\\
For $\gamma \in \mathbb{F}_N$ let $\pi_{\gamma}: 2^{\mathbb{F}_N} \to 2 $ be projection onto the $\gamma$ coordinate. For $i \in \{0,1\}$ let $S_i = \pi_e^{-1}(\{i\})$ and put $\mathcal{S} = \{S_1,S_2\}$.  Choose now a finite partition $\mathcal{R} = \{R_1,\ldots,R_r \}$ of $X$ and a finite subset $F \subseteq \mathbb{F}_N$ containing $F_0$ such that for every $A \in \mathcal{L}$ there are sets $R_j$ with $1 \leq j \leq r$ and a family of functions $(\tau_j)_{j=1}^t$ with $\tau_j: F \to 2$ such that \[ \mu \left ( \left( \bigsqcup_{j=1}^t R_j \times S^s_{\tau_j} \right) \triangle A \right) < \frac{\epsilon}{16}. \] Write $\mathcal{P} =\mathcal{R} \times \mathcal{S}$. We can identify a function $\theta: F \to r \times 2$ with a pair $(\sigma,\tau)$ where $\sigma:F \to r$ and $\tau:F \to 2$ so \[ P^{c \times s}_{\theta} = \bigcap_{\gamma \in F} \gamma^b P^{c \times s}_{\theta(\gamma)} = \left( \bigcap_{\gamma \in F}\gamma^c R_{\sigma(\gamma)} \right) \times \left( \bigcap_{\gamma \in F} \gamma^s S_{\tau(\gamma)} \right) = R^c_{\sigma} \times S^s_{\tau}.\] 

Note that for any $j \leq r$, $R_j \times S^s_{\tau}$ is a finite disjoint union of sets of the form $R^c_{\sigma} \times S^s_{\tau}$, hence any $A \in \mathscr{L}$ is within $\frac{\epsilon}{16}$ of finite disjoint union of sets of the form $P^{c \times s}_{\theta}$ for $\theta: F \to r \times 2$.\\
\\
Let $\delta = \frac{\epsilon}{4 (2r)^{2|F|}}$. Fix an ergodic action $c$ of $\mathbb{F}_N$ such that $d_u\left(\gamma^a,\gamma^c \right) < \frac{\delta^2}{32 |F|^2 (2r)^{|F|^2}}$ for all $\gamma \in F$. (For example use the fact that the ergodic automorphisms are uniformly dense in $\mathrm{Aut}(X,\mu)$ to move one of the generators $\gamma$ of $\mathbb{F}_N$ so it acts ergodically but is still sufficiently close to $\gamma^a$). Then clearly $d_H(C_{n,k}(a),C_{n,k}(c)) < \frac{\epsilon}{2}$ for all $n \leq n_0$ and $k \leq k_0$. Let $b = c \times s$. Since $c$ is ergodic and $s$ is free and mixing, $b$ is free and ergodic. Thus it is sufficient to show $d_H(C_{n,k}(c),C_{n,k}(b)) < \frac{\epsilon}{2}$ for all $n \leq n_0$, $k \leq k_0$. Since $c \prec b$, it is sufficient to show that for every partition $\mathcal{A}$ of $X \times 2^{\mathbb{F}_N}$ there is a partition $\mathcal{C}$ of $X$ such that $\rho \left(M^\mathcal{A}_{n,k}(b),M^\mathcal{C}_{n,k}(c) \right) < \frac{\epsilon}{2}$. By our previous reasoning, for each partition $\mathcal{A} = (A_1,\ldots,A_k)$ of $X \times 2^{\mathbb{F}_N}$ there is a partition $\mathcal{B}$ whose pieces are disjoint unions of sets of the form $P^b_\theta$ for $\theta: F \to r \times 2$ such that $\rho \left( M^{\mathcal{A}}_{n,k}(b), M^{\mathcal{B}}_{n,k}(b) \right) < \frac{\epsilon}{4}$.

\begin{claim}\label{cla1} There is a partition $\mathcal{Q}$ of $X$ indexed by $r \times 2$ such that for every $\theta: J \to r \times 2$ with $J \subseteq F_0 F$ we have $|(\mu \times \nu)(P^b_{\theta}) - \mu(Q^c_{\theta})| < \delta$.  \end{claim}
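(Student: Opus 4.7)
The plan is to take $\mathcal{Q}$ as a refinement of $\mathcal{R}$ by a binary partition $\{T_1, T_2\}$ of $X$, setting $Q_{(j,i)} = R_j \cap T_i$ so that $Q^c_{(\sigma, \tau)} = R^c_\sigma \cap T^c_\tau$. Because $b = c \times s$ factors into independent coordinates and $\nu(S^s_\tau) = 2^{-|J|}$ for the Bernoulli shift, one computes $(\mu \times \nu)(P^b_{(\sigma,\tau)}) = \mu(R^c_\sigma) \cdot 2^{-|J|}$, reducing the claim to producing a binary partition $\{T_1, T_2\}$ of $X$ with
\[ \left| \mu(R^c_\sigma \cap T^c_\tau) - 2^{-|J|} \mu(R^c_\sigma) \right| < \delta \]
for every $\sigma : J \to r$, $\tau : J \to 2$, and every $J \subseteq F_0 F$. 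In other words, $\{T_1, T_2\}$ must behave like an i.i.d.\ Bernoulli process jointly independent of $\mathcal{R}$ under the action of $c$.

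To build such a partition, one exploits that $c$ is uniformly very close to the free action $a$: the set $X^\ast = \{x : \gamma^c x = \gamma^a x \text{ for all } \gamma \in (F_0F)^{-1}(F_0F)\}$ has measure within a negligible quantity of $1$, and on $X^\ast$ the action $c$ is essentially free over the window $F_0 F$. A Zorn-type argument then yields a measurable $E \subseteq X^\ast$ maximal with respect to the translates $\{\gamma^c E\}_{\gamma \in F_0 F}$ being pairwise disjoint; by maximality $E$ has positive measure and its $(F_0F)^{-1}(F_0F)$-translates cover almost all of $X$. Assigning to (small cells of) $E$ independently uniformly random labels in $\{1,2\}^{F_0 F}$, and propagating the labels along the $c$-orbit, gives a random binary partition of $X$ whose expected statistics are precisely $2^{-|J|} \mu(R^c_\sigma)$ up to a tower-boundary error.

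The main obstacle is the quantitative concentration step. A Chebyshev-type bound on the variance of the random quantity $\mu(R^c_\sigma \cap T^c_\tau)$, combined with a union bound over the at most $(2r)^{|F_0 F|}$ possibilities for $\theta$, must yield a deterministic realization of the coloring satisfying the estimate uniformly in $\theta$. The explicit calibrations $\delta = \epsilon/(4(2r)^{2|F|})$ and $d_u(\gamma^c, \gamma^a) < \delta^2 /(32|F|^2(2r)^{|F|^2})$ appear chosen precisely to absorb the three sources of error: the boundary contribution from $X \setminus X^\ast$ and from points outside the tower, the variance of the random coloring within each cell, and the union-bound factor over all relevant $\theta$. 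Verifying that these error budgets combine to give the final bound $\delta$ is the essential quantitative content of the argument.
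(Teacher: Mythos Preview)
Your reduction is exactly right and matches the paper: one sets $Q_{(j,i)} = R_j \cap T_i$, so the task becomes finding a binary partition $\mathcal{T}=\{T_1,T_2\}$ with $\bigl|\mu(R^c_\sigma\cap T^c_\tau)-2^{-|J|}\mu(R^c_\sigma)\bigr|<\delta$ for all admissible $(\sigma,\tau)$, and the concentration strategy (Chebyshev plus a union bound over the $(2r)^{|G|}$ choices of $\theta$) is also the paper's.

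The gap is in your construction of the random $\mathcal{T}$. A Rokhlin--tower approach of the kind you describe does not work for a free group. If $E$ is maximal with $\{\gamma^c E\}_{\gamma\in F_0F}$ pairwise disjoint, then indeed $(F_0F)^{-1}(F_0F)\cdot E$ covers $X^\ast$, but the tower $\bigcup_{\gamma\in F_0F}\gamma^c E$ need not have measure close to $1$, and more importantly its \emph{boundary} --- the set of $x$ in the tower for which some $(\gamma^{-1})^c x$ with $\gamma\in J$ falls outside the tower --- is not small relative to the tower. For a point $x=\gamma_0^c e$ one needs $\gamma^{-1}\gamma_0\in F_0F$ for every $\gamma\in J$ in order for the propagated label to determine $T((\gamma^{-1})^c x)$; this forces $\gamma_0$ into an ``interior'' of $F_0F$ which, for balls in a nonamenable group, carries only an exponentially small fraction of the measure of the tower. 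None of the calibrations on $\delta$ or on $d_u(\gamma^a,\gamma^c)$ touch this error: it is a structural feature of $\mathbb{F}_N$, not something the uniform closeness of $c$ to $a$ can absorb. (This is exactly why Rokhlin--lemma arguments are tied to amenability.)

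The paper's construction avoids towers altogether, following Ab\'ert--Weiss. One puts a compact metric on $X$, observes that essential freeness of $c$ on the window $G=F_0F$ gives sets $D_\eta=\{x:p(\gamma_1^c x,\gamma_2^c x)>\eta\text{ for all }\gamma_1\neq\gamma_2\in G\}$ and $E_\eta\subseteq D_\eta^2$ of nearly full measure, and then chooses a partition $\mathcal{Y}=\{Y_1,\dots,Y_m\}$ of $X$ into cells of diameter $<\eta/4$. The random $\mathcal{T}$ is obtained by assigning an independent fair bit to each $Y_l$. The point is that for $x\in D_\eta$ the translates $(\gamma^{-1})^c x$ for $\gamma\in G$ lie in distinct cells $Y_l$, so their colors are genuinely independent; this gives the expectation $2^{-|G|}\mu(A)$ and, via $E_\eta$, the second-moment bound, after which Chebyshev and the union bound finish as you outlined. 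No tiling of $X$ is ever attempted.
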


Suppose the claim holds. Regard $\mathbb{F}_N$ as acting on $\bigcup_{J \subseteq \mathbb{F}_N} \{\theta: J \to 2 \times r \}$ by shift, $\gamma \cdot \theta(\gamma') = \theta(\gamma^{-1}\gamma')$. Thus the domain $\mathrm{dom}(\gamma \cdot \theta) = \gamma \mathrm{dom}(\theta)$. Then for any $\theta,\kappa: F \to 2 \times r$ and $\gamma \in F_0$ we have \[ \gamma^b P^b_{\theta} \cap P^b_{\kappa} = \begin{cases} P^b_{\gamma \cdot \theta \cup \kappa} &\mbox{ if }\gamma \cdot \theta \mbox{ and } \kappa \mbox{ are compatible},\\ \emptyset &\mbox{ if not.} \end{cases} \] and similarly \[ \gamma^c Q^c_{\theta} \cap Q^c_{\kappa} = \begin{cases} Q^c_{\gamma \cdot \theta \cup \kappa} &\mbox{ if }\gamma \cdot \theta \mbox{ and } \kappa \mbox{ are compatible},\\ \emptyset &\mbox{ if not.} \end{cases} \]

Therefore the claim gives $|(\mu \times \nu)(\gamma^b P^b_{\theta} \cap P^b_{\kappa}) - \mu(\gamma^c Q^c_{\theta} \cap Q^c_{\kappa})| <  \delta$ for all $\theta,\kappa: F \to r \times 2$. So if $\mathcal{B} = \{B_1,\ldots,B_k\}$ is a partition such that $B_i = \bigsqcup_{s=1}^t P^b_{\theta_i(s)}$ for functions $\theta_i(s):F \to r \times 2$ and we let $C_i = \bigsqcup_{s=1}^t Q^c_{\theta_i(s)}$ then we have \begin{align*} |(\mu \times \nu)(\gamma^b B_i \cap B_j) - \mu(\gamma^c C_i \cap C_j)|  &= \left \vert (\mu \times \nu) \left( \bigsqcup_{s,s' = 1}^t \gamma^b P^b_{\theta_i(s)} \cap P^b_{\theta_j(s')} \right) - \mu\left( \bigsqcup_{s,s' =1}^t \gamma^c Q^c_{\theta_i(s)} \cap Q^c_{\theta_j(s')} \right) \right \vert \\ & \leq t^2 \delta \leq (2r)^{2|F|} \delta < \frac{\epsilon}{4}, \end{align*}
since $t \leq (2r)^{|F|}$. Taking $\mathcal{C} = (C_i)_{i=1}^k$ we get $\rho \left( M^\mathcal{B}_{n,k}(b),M^\mathcal{C}_{n,k}(c) \right) < \frac{\epsilon}{4}$, which implies the theorem.\\
\\
It remains to show Claim \ref{cla1}. This part of the argument follows the proof of Theorem 1 in \cite{AW} and the extensions of these ideas developed in \cite{RTD}. Let $G = F_0 F$. Assume without loss of generality that $G$ is closed under taking inverses. Note that it suffices to prove the claim for $\theta$ defined on all of $G$. In order to find $\mathcal{Q}$ we will find a partition $\mathcal{T} = \{ T_1,T_2\}$ and set $Q_{i,j} = R_i \cap T_j$ for $1 \leq i \leq r$, $1 \leq j \leq 2$. Thus we are looking for $\mathcal{T} = \{T_1,T_2\}$ such that for all $(\tau,\sigma)$ with $\sigma:G \to r$ and $\tau: G \to 2$ we have \[ |(\mu \times \nu)(R^c_{\sigma} \times S^s_{\tau}) - \mu(R^c_{\sigma} \cap T^c_\tau)| < \delta. \] Note that $\nu(S^s_\tau) = 2^{-|G|}$ for any such $\tau$ so we are looking for $\mathcal{T}$ such that $\left \vert 2^{-|G|}\mu(R^c_{\sigma}) - \mu(R^c_{\sigma} \cap T^c_{\tau}) \right \vert < \delta$. The idea is that a random $\mathcal{T}$ should have this property.\\
\\
Without loss of generality we may assume $X$ is a compact metric space with a compatible metric $p$. For $\eta > 0$ let \[ D_\eta = \{x \in X: \mbox{ for all } \gamma, \gamma' \in G, \gamma_1 \neq \gamma_2 \mbox{ implies } p(\gamma_1^c x, \gamma_2^c x) > \eta \}\] and \[ E_\eta = \{(x,x') \in D_\eta^2: \mbox{ for all } \gamma_1,\gamma_2 \in G, p(\gamma_1^c x, \gamma_2^c x') > \eta\}. \]

\begin{lemma} There is $\eta >0$ such that $\mu(D_\eta) > 1 - \frac{\delta^2}{16 (2r)^{|F|^2}}$ and $\mu^2(X^2 \setminus E_\eta) < \frac{\delta^2}{16 (2r)^{2|F|}}$. \end{lemma}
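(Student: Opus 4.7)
The plan is to exploit that $c$ acts essentially freely on $X$ --- which we may arrange by strengthening the choice of $c$ made earlier (free actions of $\mathbb{F}_N$ are generic in $\mathrm{Aut}(X,\mu)^N$ in the uniform topology, and in any case only essential freeness along the finite set $G^{-1}G$ is needed). Under this assumption, for every non-identity $\gamma \in G^{-1}G$ the fixed-point set $\{x : \gamma^c x = x\}$ is $\mu$-null. From here both inequalities come down to continuity of measure together with the fact that $\mu$ is nonatomic.

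For the first inequality I would fix distinct $\gamma_1, \gamma_2 \in G$ and set $f_{\gamma_1,\gamma_2}(x) = p(\gamma_1^c x, \gamma_2^c x)$. Then $f_{\gamma_1,\gamma_2}$ vanishes exactly on the fixed-point set of $(\gamma_2^{-1}\gamma_1)^c$, which has measure zero, so $f_{\gamma_1,\gamma_2} > 0$ almost everywhere. By continuity of measure from above, $\mu(\{x : f_{\gamma_1,\gamma_2}(x) \leq \eta\}) \downarrow 0$ as $\eta \downarrow 0$. Since $X \setminus D_\eta = \bigcup_{\gamma_1 \neq \gamma_2 \in G} \{x : f_{\gamma_1,\gamma_2}(x) \leq \eta\}$ is a finite union over at most $|G|^2$ pairs, choosing $\eta$ small enough yields $\mu(X \setminus D_\eta) < \frac{\delta^2}{16(2r)^{|F|^2}}$.

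For the second inequality, the key observation is that for any $\gamma_1, \gamma_2 \in G$, the set $\{(x,x') \in X^2 : \gamma_1^c x = \gamma_2^c x'\}$ coincides with the graph of the measurable bijection $x \mapsto (\gamma_2^{-1}\gamma_1)^c x$. Because $\mu$ is nonatomic, every singleton fiber is $\mu$-null, so by Fubini the graph has $\mu^2$-measure zero. Consequently $\mu^2(\{(x,x') : p(\gamma_1^c x, \gamma_2^c x') \leq \eta\}) \downarrow 0$ as $\eta \downarrow 0$. Now
\[ X^2 \setminus E_\eta \;\subseteq\; (X^2 \setminus D_\eta^2) \;\cup\; \bigcup_{\gamma_1,\gamma_2 \in G}\{(x,x') : p(\gamma_1^c x, \gamma_2^c x') \leq \eta\}, \]
and $\mu^2(X^2 \setminus D_\eta^2) \leq 2 \mu(X \setminus D_\eta)$, so shrinking $\eta$ further if necessary makes both terms small simultaneously.

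The only real subtlety --- and the one place where I expect most of the effort to go --- is in establishing that $c$ may be taken essentially free along $G^{-1}G$ while preserving the quantitative uniform closeness $d_u(\gamma^a,\gamma^c) < \frac{\delta^2}{32 |F|^2 (2r)^{|F|^2}}$ already imposed. This is a finite list of generic conditions on $c$, so one can absorb it into the earlier density argument (free ergodic elements of $\mathrm{Aut}(X,\mu)^N$ are dense in the uniform topology), but it should be flagged explicitly rather than treated as automatic.
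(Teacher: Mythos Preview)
Your argument for $E_\eta$ matches the paper's (the graph/Fubini reasoning is the same). The difference is in how you handle $D_\eta$. You propose to strengthen the choice of $c$ so that it is essentially free along $G^{-1}G$, then argue that each set $\{x: p(\gamma_1^c x,\gamma_2^c x)\leq \eta\}$ shrinks to a null fixed-point set as $\eta\downarrow 0$. This is correct, and you are right to flag that the extra freeness hypothesis on $c$ must be justified separately.

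The paper avoids that extra step entirely. It does \emph{not} assume $c$ is free; instead it uses the freeness of $a$ together with the uniform closeness of $c$ to $a$. The observation is that if $\gamma_1^c x = \gamma_2^c x$ with $\gamma_1\neq\gamma_2$, then (since $a$ is free) one cannot have both $\gamma_1^c x = \gamma_1^a x$ and $\gamma_2^c x = \gamma_2^a x$; hence $x$ lies in $\{y:\gamma^c y\neq \gamma^a y\}$ for some $\gamma\in G$. This gives
\[
\mu\Bigl(X\setminus \bigcup_{\eta>0} D_\eta\Bigr)\;\leq\;\sum_{\gamma\in G} d_u(\gamma^a,\gamma^c),
\]
which is already below the required threshold by the quantitative bound on $d_u(\gamma^a,\gamma^c)$ imposed earlier. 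So the paper gets a small (not necessarily zero) bound on the bad set directly from the hypotheses in place, while your route trades that quantitative computation for an additional genericity argument about $c$. Both work; the paper's version is self-contained, whereas yours shifts the burden to the earlier construction of $c$.
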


\begin{proof} Clearly if $\eta_1 < \eta_2$ then $D_{\eta_2} \subseteq D_{\eta_1}$. We have $X \setminus \bigcup_{\eta > 0} D_\eta = \{ x \in X:$ for some $\gamma_1 \neq \gamma_2 \in G$, $\gamma_1^c x = \gamma_2^c x \}$. Now since $a$ is free, if $\gamma_1^c x = \gamma_2^c x$ then we must have $\gamma_i^c x \neq \gamma_i^a x$ for some $i \in \{1,2\}$. Each $\gamma \in G$ is a product $f_1 f_2$ for $f_1 \in F_0$ and $f_2 \in F$, thus for any $\gamma \in G$ we have \[ d_u \left( \gamma^c,\gamma^a \right) < d_u(f_1^a,f_1^c) + d_u(f_2^a,f_2^c) < \frac{\delta^2}{16 |F|^2 (2r)^{|F|^2}} \] since $f_i \in F$. Therefore \[ \mu ( \{ x: \mbox{ for some }\gamma \in G, \gamma^c x \neq \gamma^a x \}) < |G| \frac{\delta^2}{16 |F|^2 (2r)^{|F|^2}} < \frac{\delta^2}{16 (2r)^{2|F|}}.\] and hence $\mu \left( X \setminus \bigcup_{\eta > 0} D_\eta \right) < \frac{\delta^2}{16 (2r)^{|F|^2}}$. So we can find $\eta = \eta_0$ such that $D_{\eta_0}$ satisfies the lemma. Now for any $\eta > 0$, \begin{align*} D_{\eta_0}^2 \setminus \bigcup_{\eta > 0} E_\eta &= \{(x,x') \in D_{\eta_0}^2: \mbox{ for all } \eta > 0 \mbox{ there exist }\gamma_1,\gamma_2 \in G \mbox{ such that } p(\gamma_1 x,\gamma_2 x') < \eta \}\\ &= \{(x,x') \in D_{\eta_0}^2: \mbox{ there exist }\gamma_1,\gamma_2 \in G \mbox{ such that } \gamma_1 x = \gamma_2 x' \}. \end{align*}
For a fixed $x$, $\{(x,x') \in D_{\eta_0}^2: \mbox{ there exist }\gamma_1,\gamma_2 \in G \mbox{ such that } \gamma_1 x = \gamma_2 x' \}$ is finite so $\mu \left( D_{\eta_0}^2 \setminus \bigcup_{\eta > 0} E_\eta \right)$ has measure $0$ by Fubini and hence we have the lemma for $E_\eta$.  \end{proof}   

Let $\mathcal{Y} = \{Y_1,\ldots,Y_m\}$ be a partition of $X$ into pieces with diameter $< \frac{\eta}{4}$. For $x \in X$ let $Y(x)$ be the unique $l \leq m$ such that $x \in Y_i$. Let $\kappa$ be the uniform (= product) probability measure on $2^m$ and for each $\omega \in 2^m$ define a partition $Z(\omega) = \{Z^\omega_1,Z^\omega_2\}$ by letting $x \in Z^\omega_i$ if and only if $\omega(Y(x)) = i$. Thus we have a random variable $Z: (2^m,\kappa) \to \mathrm{MALG}(X,\mu)^2 $ given by $\omega \mapsto Z(\omega)$. Fix now $\tau:G \to 2$ and an arbitrary subset $A \subseteq X$. We compute the expected value of $\mu(Z(\omega)_\tau \cap A)$. Let $\chi_B$ be the characteristic function of $B$. 

\begin{align} \mathbb{E}[\mu(Z_\tau \cap A)] &= \int_{2^m} \mu(Z(\omega)_\tau \cap A) d \kappa(\omega) \nonumber \\  & = \int_{2^m} \int_X \chi_{Z(\omega)_\tau \cap A}(x) d \mu(x) d \kappa^m(\omega) \nonumber \\ &= \int_A \int_{2^m} \chi_{Z(\omega)_\tau}(x) d \kappa(\omega) d \mu(x) \nonumber \\ & = \int_{D_\eta \cap A} \int_{2^m} \chi_{Z(\omega)_\tau}(x) d \kappa(\omega) d \mu(x) + \int_{A \setminus D_\eta} \int_{2^m} \chi_{Z(\omega)_\tau}(x) d \kappa(\omega) d \mu(x) \end{align}

Now if $x \in D_\eta$ then for all $\gamma_1 \neq \gamma_2 \in G$ we have $p(\gamma_1^c x, \gamma_2^c x) \geq \eta$ so that $Y(\gamma_1^c x) \neq Y(\gamma_2^c x)$ and hence the events $\omega(Y(\gamma_1^c x)) = i$ and $\omega(Y(\gamma_2^c x)) = j$ are independent. We have $x \in \gamma^c Z(\omega)_{\tau(\gamma)}$ if and only if $\omega(Y((\gamma^{-1})^c x)) = \tau(\gamma)$, so if $x \in D_\eta$ and $\gamma_1 \neq \gamma_2 \in G$ the events $x \in \gamma^c Z(\omega)_{\tau(\gamma_1)}$ and $x \in \gamma^c Z(\omega)_{\tau(\gamma_2)}$ are independent. So for $x \in D_\eta$,

\begin{align} \int_{2^m} \chi_{Z(\omega)_\tau}(x) d \kappa(\omega) &= \kappa (\{\omega: x \in \gamma^c Z(\omega)_{\tau(\gamma)} \mbox{ for all } \gamma \in G \}) \nonumber \\ & = \prod_{\gamma \in G}\kappa \left ( \left\{ \omega: \omega(Y((\gamma^{-1})^cx)) = \tau(\gamma) \right\} \right) = 2^{-|G|}  \end{align}

Since $\mu(X \setminus D_\eta) < \frac{\delta^2}{16 (2r)^{|F|^2}}$, we have $2^{-|G|}\left(\mu(A) - \frac{\delta^2}{16 (2r)^{|F|^2}} \right) \leq (6) \leq  2^{-|G|}\mu(A)+ \frac{\delta^2}{16 (2r)^{|F|^2}}$ and thus $\left \vert \mathbb{E}[\mu(Z_\tau \cap A)] - \mu(A)2^{-|G|} \right \vert < \frac{\delta^2}{16 (2r)^{|F|^2}}.$ We now compute the second moment of $\mu(Z_\tau \cap A)$, in order to estimate its variance.

\begin{align} \mathbb{E}\left[\mu(Z_\tau \cap A)^2 \right] & = \int_{2^m} \mu(Z_\tau(\omega) \cap A)^2 d \kappa (\omega) \nonumber \\ & = \int_{2^m} \left( \int_A \chi_{Z_\tau(\omega)}(x) d \mu(x) \right)^2 d \kappa(\omega) \nonumber \\ & = \int_{2^m} \int_{A^2} \chi_{Z_\tau(\omega)}(x_1) \chi_{Z_\tau(\omega)}(x_2) d \mu^2(x_1,x_2) d \kappa(\omega)\nonumber \\ & = \int_{A^2} \int_{2^m} \chi_{Z_\tau(\omega)}(x_1) \chi_{Z_{\tau}(\omega)}(x_2) d \kappa (\omega) d \mu^2(x_1,x_2) \nonumber \\ & =  \int_{A^2 \cap E_\eta} \int_{2^m} \chi_{Z_\tau(\omega)}(x_1) \chi_{Z_{\tau}(\omega)}(x_2) d \kappa (\omega) d \mu^2(x_1,x_2) \nonumber \\ & \hspace{1 in}+ \int_{A^2 \setminus E_\eta} \int_{2^m} \chi_{Z_\tau(\omega)}(x_1) \chi_{Z_{\tau}(\omega)}(x_2) d \kappa (\omega) d \mu^2(x_1,x_2) \end{align}

Now if $(x_1,x_2) \in E_\eta$ then for any pair $\gamma_1,\gamma_2 \in G$ we have $p(\gamma^c_1 x_1,\gamma^c_2 x_2) > \eta$ so that $Y(\gamma^c_1 x_1) \neq Y(\gamma^c_2 x_2)$ and thus for a fixed pair $(x_1,x_2)$ the events $\omega(Y(\gamma^{-1})^c x_1) = \tau(\gamma)$ for all $\gamma \in G$ and $\omega(Y(\gamma^{-1})^c x_2) = \tau(\gamma)$ for all $\gamma \in G$ are independent. Hence for a fixed $(x_1,x_2) \in E_\eta$ we have

\begin{align*} \int_{2^m} \chi_{Z_\tau(\omega)}(x_1) \chi_{Z_\tau(\omega)}(x_2) d \kappa(\omega) & = \kappa(\{ \omega: x_1 \in \gamma^c Z(\omega)_{\tau(\gamma)} \mbox{ and } x_2 \in \gamma^c Z(\omega)_{\tau(\gamma)} \mbox{ for all } \gamma \in G \}) \\ & = \kappa(\{ \omega: \omega(Y((\gamma^{-1})^cx_1) = \tau(\gamma) \mbox{ and } \omega(Y((\gamma^{-1})^c) x_2) = \tau(\gamma) \mbox{ for all } \gamma \in G \}) \\ & =\kappa \left ( \left\{ \omega: \omega(Y((\gamma^{-1})^cx_1)) = \tau(\gamma) \mbox{ for all }\gamma \in G \right\} \right) \\ & \hspace{1 in} \cdot \kappa \left ( \left\{ \omega: \omega(Y((\gamma^{-1})^cx_2)) = \tau(\gamma) \mbox{ for all } \gamma \in G \right\} \right) \\ & = 2^{-2|G|} \end{align*}
 by $(7)$ and the fact that $E_\eta \subseteq D_\eta^2$. Since $\mu^2(A \setminus E_\eta) < \frac{\delta^2}{16 (2r)^{|F|^2}}$ we see $\left(\mu(A)^2 - \frac{\delta^2}{16 (2r)^{|F|^2}} \right) 2^{-2|G|} \leq (8) \leq 2^{-2|G|} \mu(A)^2 + \frac{\delta^2}{16 (2r)^{|F|^2}}$ and hence $\left \vert \mathbb{E}[\mu(Z_\tau \cap A)^2] - \mu(A)^2 2^{-2|G|} \right \vert < \frac{\delta^2}{16 (2r)^{|F|^2}}$. Therefore \begin{align*} \mathrm{Var}(\mu(Z_\tau \cap A)) &= \mathbb{E}[\mu(Z_\tau \cap A)^2] - \mathbb{E}[\mu(Z_\tau \cap A)]^2 \\ & \leq \left \vert \mathbb{E}[\mu(Z_\tau \cap A)^2] - \mu(A)^2 2^{-2|G|} \right \vert + \mu(A)^2 2^{-2|G|} - \left ( - \left \vert \mathbb{E}[\mu(Z_\tau \cap A)] - \mu(A)2^{-|G|} \right \vert + \mu(A)2^{-|G|} \right)^2 \\ & \leq \frac{\delta^2}{16(2r)^{|F|^2}} + \mu(A)^2 2^{-2|G|} - \left( - \frac{\delta^2}{16(2r)^{|F|^2}} + \mu(A)2^{-|G|} \right) ^2 \\ & = \frac{\delta^2}{16(2r)^{|F|^2}} - \frac{\delta^4}{(16(2r)^{|F|^2})^2} + 2 \mu(A)2^{-|G|} \frac{\delta^2}{16 ( 2r)^{|F|^2}} \leq \frac{\delta^2}{8 (2r)^{|F|^2}} . \end{align*}

Therefore Chebyshev's inequality for $\mu(Z_\tau \cap A)$ gives

\begin{align*}\kappa \left( \left\{ \omega: |\mu(Z_\tau(\omega) \cap A) - \mathbb{E}[\mu(Z_\tau \cap A)] | \geq \frac{\delta}{2} \right \} \right) &\leq \frac{\mathrm{Var}(\mu(Z_\tau \cap A))}{\left(\frac{\delta}{2}\right)^2} \\ & \leq \frac{1}{2 (2r)^{|F|^2} }   \end{align*}

Now since $\left \vert \mathbb{E}[\mu(Z_\tau \cap A)] - \mu(A)2^{-|G|} \right \vert < \frac{\delta}{2}$ we have \[\kappa \left( \left\{ \omega: \left \vert \mu(Z_\tau(\omega) \cap A) - \mu(A)2^{-|G|} \right \vert \geq \delta \right \} \right) \leq  \frac{1}{2 (2r)^{|F|^2} }.\]

Since this is true for each $\tau \in 2^G$ we have 

 \[\kappa \left( \left\{ \omega:\left \vert \mu(Z_\tau(\omega) \cap A) - \mu(A)2^{-|G|}\right \vert| \geq \delta \mbox{ for some } \tau: G \to 2  \right \} \right ) \leq  \frac{1}{2 r^{|F|^2} }.\]

Finally, letting $A$ range over the sets $R_\sigma$ for $\sigma \in r^G$ we get 

 \[\kappa \left ( \left\{ \omega: \left \vert \mu(Z_\tau(\omega) \cap R^c_\sigma) - \mu(R^c_\sigma)2^{-|G|} \right \vert \geq \delta \mbox{ for some } \tau: G \to 2 \mbox{ and } \sigma: G \to r \right \} \right) \leq  \frac{1}{2} .\]

Then any member of the nonempty complement of \[ \left\{ \omega: \left \vert \mu(Z_\tau(\omega) \cap R^c_\sigma) - \mu(R^c_\sigma)2^{-|G|} \right \vert \geq \delta \mbox{ for some } \tau: G \to 2 \mbox{ and } \sigma: G \to r \right \} \] works as $\mathcal{T}$. This completes the proof of Theorem \ref{thm5}.

\end{proof}

We note that the proof of Theorem \ref{thm5} goes through for any group $\Gamma$ such that an arbitrary free action can be approximated in the uniform topology by ergodic actions - for example any group of the form $\mathbb{Z} * H$. Such an approximation is impossible if $\Gamma$ has property $\mathrm{(T)}$, and in this case the extreme points of $\mathrm{FR}_{\sim_s}(\Gamma,X,\mu)$ are closed. Therefore the following question is natural.

\begin{question} Let $\Gamma$ be a group without property $\mathrm{(T)}$. Can every free action of $\Gamma$ be approximated in the uniform topology of $A(\Gamma,X,\mu)$ by ergodic actions? \end{question}

\section{The space of stable weak equivalence classes.}

$\mathrm{A}_{\sim_s}(\Gamma,X,\mu)$ can be given the structure of a weak convex space in exactly the same way as $\mathrm{A}_\sim(\Gamma,X,\mu)$. Moreover, it is clear that for any $a \in \mathrm{A}(\Gamma,X,\mu)$ and $t \in [0,1]$ we have $a \sim_s ta + (1-t)a$, so $\mathrm{A}_{\sim_s}(\Gamma,X,\mu)$ is in fact a convex space. Recall that the metric $d_s$ on $\mathrm{A}_{\sim_s}(\Gamma,X,\mu)$ is defined by $d_s(a,b) = d(a \times \iota,b \times \iota)$ where $d$ is the metric on $\mathrm{A}_\sim(\Gamma,X,\mu)$.

\begin{proposition}\label{prop12} For any $a,b,c \in \mathrm{A}(\Gamma,X,\mu)$ and $t \in [0,1]$, we have $d_s(ta + (1-t)c,tb + (1-t)c) \leq td_s(a,b)$. \end{proposition}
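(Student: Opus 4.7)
The plan is to reduce to the sharper unstabilized estimate
\begin{equation*}
d(tx+(1-t)z,\; ty+(1-t)z) \;\le\; t\cdot d(x,y) \qquad (*)
\end{equation*}
for arbitrary $x,y,z\in \mathrm{A}_\sim(\Gamma,X,\mu)$, and then to deduce the proposition by specializing to $x=a\times \iota$, $y=b\times\iota$, $z=c\times\iota$. The reduction is immediate: the $\iota$-product distributes over disjoint sums, so $(ta+(1-t)c)\times\iota$ is literally the same measure-preserving action as $t(a\times\iota)+(1-t)(c\times\iota)$ (both are the action on the disjoint union of two clopen pieces, with $a\times\iota$ on the first weighted by $t$ and $c\times\iota$ on the second weighted by $1-t$). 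Hence
\begin{align*}
d_s(ta+(1-t)c,\,tb+(1-t)c) &= d\bigl(t(a\times\iota)+(1-t)(c\times\iota),\,t(b\times\iota)+(1-t)(c\times\iota)\bigr)\\
&\le t\cdot d(a\times\iota,b\times\iota) \;=\; t\cdot d_s(a,b).
\end{align*}

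To establish $(*)$, I would prove the pointwise Hausdorff bound
\begin{equation*}
d_H\bigl(C_{n,k}(tx+(1-t)z),\, C_{n,k}(ty+(1-t)z)\bigr) \;\le\; t\cdot d_H\bigl(C_{n,k}(x),\,C_{n,k}(y)\bigr)
\end{equation*}
for each $n,k$; multiplying by $\tfrac{1}{2^{n+k}}$ and summing then yields $(*)$. Fix $n,k$, write $h=d_H(C_{n,k}(x),C_{n,k}(y))$, and let $(W_x,\mu_x),(W_y,\mu_y),(W_z,\mu_z)$ be the underlying spaces. Given any partition $\mathcal{P}=\{P_1,\dots,P_k\}$ of the space of $tx+(1-t)z$, split each piece along the disjoint union as $P_i = P_i^x\sqcup P_i^{z,1}$. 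For arbitrary $\delta>0$, since $M_{n,k}^{\mathcal{P}^x}(x)\in C_{n,k}(x)$, choose a partition $\mathcal{Q}^y=\{Q_1^y,\dots,Q_k^y\}$ of $W_y$ with $\|M_{n,k}^{\mathcal{P}^x}(x)-M_{n,k}^{\mathcal{Q}^y}(y)\|_\infty < h+\delta$. Assemble $Q_i := Q_i^y\sqcup P_i^{z,2}$, where $P_i^{z,2}$ is the image of $P_i^{z,1}$ under the canonical identification of the two copies of $W_z$.

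The key algebraic point is that the copy of $W_z$ is invariant in both $tx+(1-t)z$ and $ty+(1-t)z$, carries the same measure $(1-t)\mu_z$, and $z$ acts identically on both copies, so the $z$-contributions cancel exactly:
\begin{equation*}
M^{\mathcal{P}}_{n,k}(tx+(1-t)z) - M^{\mathcal{Q}}_{n,k}(ty+(1-t)z) \;=\; t\bigl[M^{\mathcal{P}^x}_{n,k}(x) - M^{\mathcal{Q}^y}_{n,k}(y)\bigr],
\end{equation*}
whose sup norm is less than $t(h+\delta)$. Letting $\delta\to 0$ shows that $M^{\mathcal{P}}_{n,k}(tx+(1-t)z)$ lies within $th$ of $C_{n,k}(ty+(1-t)z)$. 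Taking closures gives $C_{n,k}(tx+(1-t)z)\subseteq \overline{B}_{th}(C_{n,k}(ty+(1-t)z))$; the reverse inclusion is proved symmetrically.

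There is no serious obstacle here, only careful bookkeeping: the whole content is the clean cancellation in the displayed identity above, which is possible precisely because the $z$-summand is common to both sides of the convex combination with identical weight $1-t$. This is essentially a pointwise, one-variable refinement of the convexity estimate in Lemma \ref{lem3}, and the same disjoint-union/partition construction used in the proof of that lemma transfers verbatim.
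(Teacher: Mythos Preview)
Your proof is correct and follows essentially the same route as the paper: reduce to the unstabilized estimate $d(tx+(1-t)z,ty+(1-t)z)\le t\,d(x,y)$ via the identification $(ta+(1-t)c)\times\iota \sim t(a\times\iota)+(1-t)(c\times\iota)$, and prove that estimate level-by-level on $C_{n,k}$ by splitting a partition along the disjoint union, approximating only the first component, and reusing the $z$-piece so that its contribution cancels. The paper's Proposition~\ref{prop5} carries out exactly this computation.
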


It is clear that $(ta + (1-t)c) \times \iota \sim t(a \times \iota) + (1-t)(c \times \iota)$, so it suffices to show the following.

\begin{proposition}\label{prop5} For any $a,b,c \in \mathrm{A}(\Gamma,X,\mu)$ and $t \in [0,1]$ we have $d(ta + (1-t)c,tb  + (1-t)c) \leq t d(a,b)$. \end{proposition}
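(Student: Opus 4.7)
The plan is to reduce Proposition \ref{prop5} to a per-coordinate Hausdorff-distance estimate and then argue essentially as in the proof of Lemma \ref{lem3}, but with the second summand held constant. Since
\[ d(a,b) = \sum_{n,k=1}^\infty \frac{1}{2^{n+k}} d_H\bigl( C_{n,k}(a), C_{n,k}(b) \bigr), \]
it suffices to prove for each fixed $n, k$ that
\[ d_H\bigl(C_{n,k}(ta + (1-t)c), C_{n,k}(tb + (1-t)c)\bigr) \leq t \cdot d_H\bigl(C_{n,k}(a), C_{n,k}(b)\bigr), \]
and then sum term-by-term with the weights $2^{-(n+k)}$.

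To prove the per-coordinate estimate, I would realize both convex combinations on the common space $X_1 \sqcup X_2$ with measure $t\mu_1 + (1-t)\mu_2$, where $a$ and $b$ act on $X_1$ while $c$ acts on $X_2$. Given a $k$-partition $\mathcal{P} = (P_i)_{i=1}^k$ of $X_1 \sqcup X_2$, I would split it into $\mathcal{P}_1 = (P_i \cap X_1)_{i=1}^k$ and $\mathcal{P}_2 = (P_i \cap X_2)_{i=1}^k$. Since each of $X_1, X_2$ is invariant under both mixed actions, the coordinates of $M^\mathcal{P}_{n,k}(ta + (1-t)c)$ decompose cleanly as
\[ t \cdot \mu(\gamma_p^a P^1_i \cap P^1_j) + (1-t) \cdot \mu(\gamma_p^c P^2_i \cap P^2_j). \]
For any $\epsilon > 0$, the hypothesis supplies a partition $\mathcal{Q}_1$ of $X_1$ such that $M^{\mathcal{Q}_1}_{n,k}(b)$ is within $d_H(C_{n,k}(a), C_{n,k}(b)) + \epsilon$ of $M^{\mathcal{P}_1}_{n,k}(a)$ coordinatewise. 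Setting $\mathcal{Q} = (Q_i^1 \sqcup P_i^2)_{i=1}^k$, the $(1-t)$-weighted $c$-terms cancel exactly, so the coordinates of $M^\mathcal{Q}_{n,k}(tb + (1-t)c)$ differ from those of $M^\mathcal{P}_{n,k}(ta + (1-t)c)$ by at most $t\bigl(d_H(C_{n,k}(a), C_{n,k}(b)) + \epsilon\bigr)$. This gives one inclusion needed for the Hausdorff-distance bound, and the reverse is symmetric.

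There is no real obstacle here; the argument is a specialization of the key inequality in Lemma \ref{lem3} to the case where the second summand is held fixed, so that the $(1-t)$-component contributes zero error. The only step requiring care is verifying that the $c$-contributions cancel on the nose in the disjoint-union representation, which is immediate from the invariance of $X_2$ under both $ta + (1-t)c$ and $tb + (1-t)c$.
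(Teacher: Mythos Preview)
Your proposal is correct and follows essentially the same approach as the paper: reduce to the per-$(n,k)$ Hausdorff estimate, split a partition of $X_1 \sqcup X_2$ into its two pieces, approximate the $X_1$-part for $b$ while keeping the $X_2$-part fixed so the $(1-t)$-weighted $c$-terms cancel exactly. The paper's proof is precisely this computation, written out a bit more tersely and only stating one inclusion explicitly.
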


\begin{proof} Fix $n,k$ and write $C(a) = C_{n,k}(a)$ in order to show that $d_H(C(ta + (1-t)c), C(tb + (1-t)c)) \leq t d_H(C(a),C(b))$. Fix $\epsilon > 0$. Let $\mathcal{P} = (P_i)_{i=1}^n$ be a partition of $ X_1 \sqcup X_2$ where $X_1$ and $X_2$ are disjoint copies of $X$. Let $P_i^l = P_i \cap X_l$ for $l \in \{1,2\}$. Find a partition $\mathcal{Q} = (Q_i)_{i=1}^n$ such that for $i,j \leq n$ and $p \leq k$ we have \[ | \mu(\gamma_p^a P^1_i \cap P^1_j) - \mu(\gamma_p^b Q_i \cap Q_j)| < d_H(C(a),C(b)) + \epsilon.\] Then if we take $Q_i' = Q_i \sqcup P^2_i$ for all $i,j \leq n$, \begin{align*} | (t\mu + (1-t) \mu)&( \gamma_p^{ta + (1-t)c} P_i \cap P_j) - (t \mu + (1-t) \mu)(\gamma_p^{tb + (1-t)c} Q_i' \cap Q_j')| \\ & = | t \mu( \gamma_p^a P^1_i \cap P^1_j) + (1-t) \mu( \gamma_p^c P^2_i \cap P^2_j) - t\mu( \gamma_p^b Q_i \cap Q_j) - (1-t) \mu(\gamma_p^c P^2_i \cap P^2_j) | \\ & = | t \mu( \gamma_p^a P^1_i \cap P^1_j) +t\mu( \gamma_p^b Q_i \cap Q_j)| \leq t (d_H(C(a),C(b))  + \epsilon). \end{align*}

 \end{proof}

Theorem \ref{thm3} now follows from Proposition \ref{prop12} and Corollary 12 in \cite{CFr}. Tucker-Drob and Bowen have obtained the next result independently of the author.

\begin{proposition} The extreme points of $\mathrm{A}_{\sim_s}(\Gamma,X,\mu)$ are precisely those stable weak equivalence classes which contain an ergodic action. \end{proposition}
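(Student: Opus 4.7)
The proof splits into two directions.

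For the direction that containing an ergodic action implies being extreme, I would work inside the embedding $\Psi: \mathrm{A}_{\sim_s}(\Gamma,X,\mu) \to \mathcal{K}(M_s(K^\Gamma))$ given by $a \mapsto \cch(E(a,K))$. A direct computation shows $E(tb + (1-t)c, K) = t E(b,K) + (1-t) E(c,K)$, and taking closed convex hulls yields $\Psi(tb + (1-t)c) = t\Psi(b) + (1-t)\Psi(c)$ as Minkowski sums in $M_s(K^\Gamma)$. Suppose now that $a$ is ergodic and $[a]_s = t[b]_s + (1-t)[c]_s$. For any $\nu \in E(a,K)$, the map $\Phi^{\phi,a}:X\to K^\Gamma$ is $\Gamma$-equivariant, so $\nu = (\Phi^{\phi,a})_*\mu$ is ergodic on $K^\Gamma$, hence an extreme point of $M_s(K^\Gamma)$. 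Writing $\nu = tx + (1-t)y$ with $x \in \Psi(b)$ and $y \in \Psi(c) \subseteq M_s(K^\Gamma)$, extremality of $\nu$ in $M_s(K^\Gamma)$ forces $x = y = \nu$, so $\nu \in \Psi(b)\cap\Psi(c)$. Taking closed convex hulls gives $\Psi(a) \subseteq \Psi(b)\cap\Psi(c)$. Now compare support functions $h_K(f) = \sup_{\nu \in K}\int f\,d\nu$ for $f \in C(K^\Gamma)$: the Minkowski identity $h_{\Psi(a)} = t h_{\Psi(b)} + (1-t) h_{\Psi(c)}$ together with $h_{\Psi(a)} \leq h_{\Psi(b)}$ rearranges to $h_{\Psi(c)} \leq h_{\Psi(b)}$, so $\Psi(c) \subseteq \Psi(b)$; symmetrically $\Psi(b) \subseteq \Psi(c)$, hence $\Psi(b) = \Psi(c)$. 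Using $tK + (1-t)K = K$ for any convex $K$, this yields $\Psi(a) = \Psi(b) = \Psi(c)$, i.e., $[a]_s = [b]_s = [c]_s$.

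For the converse, let $a = \int_Z a_z\,d\eta(z)$ be the ergodic decomposition of $a$ and define $\Theta:Z \to \mathrm{A}_{\sim_s}(\Gamma,X,\mu)$ by $\Theta(z) = [a_z]_s$, which is measurable by the analogue of Lemma \ref{lem10}. Let $\iota$ be the affine embedding of $\mathrm{A}_{\sim_s}(\Gamma,X,\mu)$ as a compact convex subset of a Banach space $V$ supplied by Theorem \ref{thm3}. The key step is to show that $\iota([a]_s)$ is the barycenter of the pushforward measure $\iota_*\Theta_*\eta$ in $V$: for a finitely supported measure $\sum\alpha_i\delta_{z_i}$ on $Z$ the integration operation from Section 4 gives $\sum\alpha_i[a_{z_i}]_s$, which $\iota$ sends to the corresponding finite barycenter by affineness. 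Approximating $\eta$ by finitely supported measures and passing to the limit using continuity of $\iota$ and of the barycenter map on measures extends this identity to arbitrary $\eta$. Since $[a]_s$ is extreme in the compact convex set $\iota(\mathrm{A}_{\sim_s}(\Gamma,X,\mu)) \subseteq V$, a standard Hahn-Banach separation argument shows that $\delta_{\iota([a]_s)}$ is the only probability measure on this compact set with barycenter $\iota([a]_s)$. Hence $\Theta_*\eta = \delta_{[a]_s}$, i.e., $[a_z]_s = [a]_s$ for $\eta$-almost every $z$, and any such $a_z$ is an ergodic action representing $[a]_s$.

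The main obstacle is the compatibility claim in the converse direction, namely that the abstract integration of actions on $\mathrm{A}_{\sim_s}(\Gamma,X,\mu)$ corresponds under $\iota$ to the Bochner barycenter in $V$. Establishing this requires extending the affineness of $\iota$ from finite convex combinations using continuity of both the integration operation (in the metric $d_s$) and the embedding $\iota$ furnished by \cite{CFr}; once this is in hand, the classical uniqueness of representing measures at extreme points of compact convex sets finishes the argument.
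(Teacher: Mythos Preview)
Your forward direction (ergodic implies extreme) is correct and takes a genuinely different route from the paper. The paper invokes Theorem~3.11 of \cite{RTD} to deduce $a \prec b$ and $a \prec c$ from ergodicity of $a$, and then runs a metric computation in $[0,1]^{n\times k\times k}$ to force $b \sim_s c$. Your argument via the Minkowski identity $\Psi(tb+(1-t)c)=t\Psi(b)+(1-t)\Psi(c)$ (which is correct, since a map $\phi$ on $X_1\sqcup X_2$ is exactly a pair of restrictions), extremality of ergodic measures in $M_s(K^\Gamma)$, and comparison of support functions is more conceptual; both approaches are valid.

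The converse direction has a real gap at precisely the point you flag. The barycenter identity $\iota([a]_s)=\int_Z \iota([a_z]_s)\,d\eta(z)$ is the entire content of this direction, and your justification by approximating $\eta$ with finitely supported measures does not work as stated. The map $\Theta:z\mapsto[a_z]_s$ is only Borel (Lemma~\ref{lem10}), not continuous, so weak${}^*$ convergence $\eta_n\to\eta$ does not yield $\Theta_*\eta_n\to\Theta_*\eta$; and nothing in the paper shows that $\eta'\mapsto\bigl[\int_Z a_z\,d\eta'(z)\bigr]_s$ is weak${}^*$-to-$d_s$ continuous, so the left-hand side cannot be passed to the limit either. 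Neither continuity is obvious.

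The paper avoids this by rerunning the hands-on argument of Theorem~\ref{thm4}: if the classes $[a_z]_s$ are not almost surely constant, one separates them via some $C_{n_0,k_0}$ and a hyperplane in $[0,1]^{n_0\times k_0\times k_0}$ to produce disjoint positive-measure sets $L_1,L_2\subseteq Z$ with $\int_{L_1}a_z\,d\eta_{L_1}(z)\nsim_s\int_{L_2}a_z\,d\eta_{L_2}(z)$, exhibiting $a$ as a nontrivial convex combination. No continuity in $\eta$ is needed. If you want to rescue the barycenter approach, the right move is not approximation but a direct computation of support functions: show $h_{\Psi(a)}(f)=\int_Z h_{\Psi(a_z)}(f)\,d\eta(z)$ for each $f\in C(K^\Gamma)$ by disintegrating $(\Phi^{\phi,a})_*\mu$ along the ergodic decomposition for one inequality and invoking a measurable selection of near-optimal $\phi_z$'s for the other.
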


\begin{proof} Suppose that $a$ is ergodic and we have $a \sim_s tb + (1-t)c$ for $t \in (0,1)$. Therefore $a \prec \iota \times (tb + (1-t)c) \sim t( b \times \iota) + (1-t)(c \times \iota)$. Since $a$ is ergodic, Theorem 3.11 in \cite{RTD} implies that $a \prec b$ and $a \prec c$. Suppose toward a contradiction that $b \nprec_s c$, so that for some $n,k$ we have $C_{n,k}(b) \nsubseteq \cch( C_{n,k}(c) )$. Fixing $n,k$ write $C(d)$ for $C_{n,k}(d)$. Let $\alpha = \sup_{x \in C(b)} p(x, \cch(C(c)) )$ where $p$ is the metric on $[0,1]^{n \times k \times k}$. Choose $x_0 \in C(b)$ so that $p(x_0, \cch(C(c))) = \alpha$. Choose $y_0 \in \cch(C(c))$ so that $p(x_0,y_0) = \alpha$. Consider the point $tx_0 + (1-t) y_0 \in \cch(C(tb + (1-t)c))$. It is easy to see that \[ p(tx + (1-t) z, ty  + (1-t) z) \leq tp(y,z) \] for any $x,y,z$ so we have

 \begin{align*} p(t x_0 + (1-t) y_0, x_0) & = p(t x_0 + (1-t)y_0, tx_0 + (1-t) x_0) \\ & \leq (1-t) p(x_0,y_0) < \alpha \end{align*}

since $0 < t$. Since $\alpha = \inf_{y \in \cch(C(c))} p(x_0,y)$ we see that $tx_0 + (1-t) y_0 \notin \cch(C(c))$ and hence $\cch(C(tb+(1-t)c) \nsubseteq \cch(C(c))$. Since for any two actions $d,e$ we have $d \prec_s e$ if and only if $\cch(C_{n,k}(d)) \subseteq \cch(C_{n,k}(e))$ for all $n,k$ this implies that $tb + (1-t)c \nprec_s c$. But $tb + (1-t)c \prec_s a \prec c$ by hypothesis, so we have a contradiction and we conclude $b \prec_s c$. A symmetric argument shows $c \prec_s b$, so $b \sim_s c$. Since $\mathrm{A}_{\sim_s}(\Gamma,X,\mu)$ obeys $(2)$ of Definition \ref{def1}, we get that $a \sim_s b \sim_s c$. Therefore if a stable weak equivalence class contains an ergodic action, it is an extreme point of $A_{\sim_s}(\Gamma,X,\mu)$. On the other hand, an argument identical to the proof of Theorem \ref{thm4} shows that if the stable weak equivalence class of an action $a$ is an extreme point of $A_{\sim_s}(\Gamma,X,\mu)$ then if we write $a = \int_Z a_z d \eta(z)$ then there is an ergodic action $b$ such that $a_z \sim_s b$ for all $z \in Z$. Thus $a \sim_s b \times \iota \sim_s b$ and we see that $a$ is stably weakly equivalent to an ergodic action. \end{proof}

\bibliographystyle{plain}
\bibliography{bibliography}

Department of Mathematics\\
California Institute of Technology\\
Pasadena CA, 91125\\
\texttt{pjburton@caltech.edu}

\end{document}